\newtheorem{thm}{Theorem}[section]
\newtheorem{lem}[thm]{Lemma}
\newtheorem{prop}[thm]{Proposition}
\newtheorem{cor}[thm]{Corollary}
\theoremstyle{definition}
\newtheorem{dfn}[thm]{Definition}
\newtheorem{ex}[thm]{Example}
\newtheorem{rmk}[thm]{Remark}
\theoremstyle{remark}
\newtheorem*{ac}{Acknowlegments}
\newtheorem*{proof of claim}{Proof of Claim}
\numberwithin{equation}{thm}
\def\ass{\operatorname{Ass}}
\def\cm{\mathsf{CM}}
\def\depth{\operatorname{depth}}
\def\Ext{\operatorname{Ext}}
\def\ge{\geqslant}
\def\height{\operatorname{ht}}
\def\Hom{\operatorname{Hom}}
\def\ker{\operatorname{Ker}}
\def\le{\leqslant}
\def\m{\mathfrak{m}}
\def\P{\mathbb{P}}
\def\p{\mathfrak{p}}
\def\q{\mathfrak{q}}
\def\spec{\operatorname{Spec}}
\def\supp{\operatorname{Supp}}
\def\V{\mathrm{V}}
\def\mcm{\mathsf{MCM}}
\def\gor{\mathsf{Gor}}
\def\fid{\mathsf{FID}}
\def\free{\mathsf{Free}}
\def\id{\operatorname{id}}
\def\D{\mathrm{D}}
\def\S{\mathsf{S}}
\def\T{\mathsf{T}}
\begin{document}
\allowdisplaybreaks
\title{Openness of various loci over Noetherian rings}
\author{Kaito Kimura}
\address[KK]{Graduate School of Mathematics, Nagoya University, Furocho, Chikusaku, Nagoya 464-8602, Japan}
\email{m21018b@math.nagoya-u.ac.jp}
\thanks{2020 {\em Mathematics Subject Classification.} 13D05, 13C14.}
\thanks{{\em Key words and phrases.} openness of loci, Nagata criterion, finite injective dimension, Gorenstein, Cohen--Macaulay.}
\begin{abstract}
In this paper, we consider the openness of the $\P$-locus of a finitely generated module over a commutative noetherian ring in the case where $\P$ is each of the properties $\fid, \gor, \cm, \mcm$, $(\S_n)$, and $(\T_n)$.
One of the main results asserts that $\fid$-loci over an acceptable ring are open.
We give a module version of the Nagata criterion, and prove that it holds for all of the aforementioned properties.
\end{abstract}
\maketitle
\section{Introduction}

Throughout the present paper, all rings are assumed to be commutative and noetherian, and all modules be finitely generated.

Let $\P$ be a property of local rings, and $R$ a ring.
The set of prime ideals $\p$ of $R$ such that the local ring $R_\p$ satisfies $\P$ is called the $\P$-locus of $R$.
For example, the regular, complete intersection, Gorenstein, and Cohen--Macaulay properties and Serre's condition $(\S_n)$ can be considered as $\P$.
When $\P$ is any one of the properties appearing above, the $\P$-locus is stable under generalization.
Therefore, it is a natural question to ask when the $\P$-locus is open in the Zariski topology for a given $\P$.
This question has been studied for a long time by many people.
Nagata \cite{N} produced the following condition, which is called the \textit{Nagata criterion}:
\begin{enumerate}[\rm(NC):]
\item if the $\P$-locus of $R/\p$ contains a nonempty open subset of $\spec (R/\p)$ for all prime ideals $\p$ of $R$, then the $\P$-locus of $R$ is an open subset of $\spec (R)$.
\end{enumerate}
\noindent The Nagata criterion holds for the regular, complete intersection, Gorenstein, and Cohen--Macaulay properties and Serre's condition $(\S_n)$; see \cite{GM, Mat, N, Ta}.

Let $\P$ be a property of modules over a local ring, and $M$ an $R$-module.
The set of prime ideals $\p$ of $R$ such that the module $M_\p$ over the local ring $R_\p$ satisfies $\P$ is called the $\P$-locus of $M$ (over $R$).
The same question can be asked for the $\P$-locus of $M$ which is stable under generalization, and by the so-called topological Nagata criterion, the $\P$-locus of $M$ is open if and only if the $\P$-locus of $M$ contains a nonempty open subset of $\V(\p)$ for all $\p$ belonging to the $\P$-locus of $M$.
It is well-known fact that the free locus \cite{Mat} and the finite projective dimension locus \cite{BS} are always open.
The Cohen--Macaulay locus of a module over an excellent ring is open \cite{G}.
Furthermore, the Gorenstein locus of a module over an acceptable ring in the sense of Sharp \cite{Sh} is open \cite{L}, and so is the finite injective dimension locus of a module over an excellent ring \cite{T}.

In this paper, we consider the openness of the $\P$-locus of a module in the case where $\P$ is each of the finite injective dimension property ($\fid$), the Gorenstein property ($\gor$), the Cohen--Macaulay property ($\cm$), the maximal Cohen--Macaulay property ($\mcm$), and Serre's conditions $(\S_n)$ and $(\T_n)$; for the definition of $(\T_n)$ see Definition \ref{def}.
First of all, we handle the case $\P=\fid$.
For a fixed element $\p$ of the $\P$-locus of $M$, we give a (necessary and) sufficient condition for the $\P$-locus of $M$ to contain a nonempty open subset of $\V(\p)$.
Also, we study some rings that satisfy those conditions for all prime ideals.
The main result in this direction is the following theorem concerning $\fid$-loci over an acceptable ring.
\begin{thm}\label{main1}
The $\fid$-locus of a module over an acceptable ring is open in the Zariski topology.
In particular, the $\fid$-locus of a module over a homomorphic image of a Gorenstein ring is open.
\end{thm}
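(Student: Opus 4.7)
My plan is to reduce Theorem \ref{main1} to the criterion for the $\fid$-locus of $M$ to contain a nonempty open subset of $\V(\p)$ that the paper establishes for a fixed $\p$ in the $\fid$-locus, and then verify its hypotheses when $R$ is acceptable. Since the $\fid$-locus is stable under generalization, the topological Nagata criterion tells us it is open in $\spec R$ exactly when, for every $\p$ in this locus, it contains a nonempty open subset of $\V(\p)$. Fixing such a $\p$, so that $M_\p$ has finite injective dimension over $R_\p$, our task reduces to checking the sufficient condition alluded to above. A key ingredient will be that $R_\p$ is automatically Cohen--Macaulay: this is the Peskine--Szpiro--Roberts resolution of Bass' conjecture.

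The core input is the good behaviour of Gorensteinness over acceptable rings. The class of acceptable rings is closed under homomorphic images, so $R/\p$ is again acceptable, and by the main result of \cite{L} its Gorenstein locus is open in $\spec(R/\p)$. Since $(R/\p)_\p$ is a field, this locus is nonempty, and pulling back yields a nonempty open $U\subseteq\V(\p)$ on whose points $\q$ the ring $R_\q/\p R_\q$ is Gorenstein. Combined with the finite injective dimension of $M_\p$, this should supply precisely the input needed to apply the earlier sufficient condition and conclude $M_\q\in\fid$ for every $\q$ in a nonempty open subset of $\V(\p)$. Conceptually, after passing to a localization having a dualizing complex $\omega$ (guaranteed by acceptability), finite injective dimension of $M_\p$ translates into perfectness of the derived $\Hom$-complex $\Hom_{R_\p}(M_\p,\omega)$, and the perfect locus of a bounded complex of finitely generated modules is open in the spirit of \cite{BS}.

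The ``in particular'' clause is immediate since every homomorphic image of a Gorenstein ring is acceptable in Sharp's sense. The main obstacle is the propagation step itself: finite injective dimension at a single prime does not spread to a Zariski neighbourhood automatically, so the delicate part is to align the earlier sufficient condition with the Gorenstein-locus openness available on acceptable rings, and to certify that their combination produces a genuine nonempty open subset of $\V(\p)$ inside the $\fid$-locus, rather than merely isolated points.
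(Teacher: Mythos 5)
Your proposal is correct and follows essentially the same route as the paper: reduce via the topological Nagata criterion to producing, for each $\p$ in the $\fid$-locus, a nonempty open subset of $\V(\p)$; verify from acceptability (condition (2) of the definition applied to the finitely generated $R$-algebra $R/\p$, together with the fact that the generic point of the domain $R/\p$ is a Gorenstein point) that $\gor(R/\p)$ contains a nonempty open subset of $\spec(R/\p)$; and then invoke the sufficient condition of Theorem \ref{thmA}. The ``delicate propagation step'' you flag at the end is exactly Theorem \ref{thmA} (resting on Lemma \ref{T 2.4} plus the openness of the free locus of $\Ext_R^j(R/\p,M)$ over the domain $R/\p$), and its hypothesis is precisely the condition you verified, so no further alignment is needed.
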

\noindent Next, we confirm that some results on the finite injective dimension property hold on other properties.
We give a module version of the Nagata criterion for a property $\P$ of modules over a local ring:
\begin{enumerate}[\rm(NC)$^\ast$:]
\item if the $\P$-locus of $M/\p M$ over $R/\p$ contains a nonempty open subset of $\spec (R/\p)$ for all prime ideals $\p$ of $R$ belonging to $\supp_R(M)$, 
then the $\P$-locus of $M$ over $R$ is an open subset of $\spec (R)$.
\end{enumerate}
\noindent It is natural to ask for which properties (NC)$^\ast$ hold.
We prove the following theorem.
\begin{thm}\label{main2}
Let $n\ge 0$ be an integer.
Then {\rm(NC)$^{\ast}$} holds for each $\P\in\{\fid, \gor, \cm, \mcm, (\S_n), (\T_n)\}$.
\end{thm}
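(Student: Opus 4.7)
The plan is to invoke the topological Nagata criterion recalled in the introduction: it suffices to show, for each $\p$ belonging to the $\P$-locus of $M$, that this locus contains a nonempty open subset of $\V(\p)$. Fix such $\p$, and let $U\subseteq\V(\p)=\spec(R/\p)$ be the nonempty open subset provided by the hypothesis, on which $M/\p M$ satisfies $\P$ over $R/\p$. The problem thus reduces to the local assertion: from the $\P$-ness of $M_\p$ over $R_\p$ together with the $\P$-ness of $(M/\p M)_\q$ over $(R/\p)_\q$, conclude that $M_\q$ is $\P$ over $R_\q$ for all $\q$ in some nonempty open subset of $U$.

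For the depth-based properties $\P\in\{\cm,\mcm,(\S_n),(\T_n)\}$ I would proceed uniformly. Choose a maximal $M_\p$-regular sequence $x_1,\dots,x_t\in\p R_\p$ of length $t=\depth_{R_\p}M_\p$ and, after clearing denominators, regard the $x_i$ as elements of $\p$. Since the $M$-regular locus of a sequence is open in $\spec(R)$, intersecting with $U$ yields a nonempty open $W\subseteq\V(\p)$ containing $\p$. For $\q\in W$ one has $\depth_{R_\q}M_\q\ge t$ and $\p R_\q\in\ass_{R_\q}(M_\q/(x_1,\dots,x_t)M_\q)$; combining these with the assumed depth/dimension data for $(M/\p M)_\q$ over $(R/\p)_\q$ yields the corresponding condition for $M_\q$. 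For $\cm$ and $\mcm$ the resulting equalities $\depth=\dim$ match up, while for $(\S_n)$ and $(\T_n)$ the relevant numerical inequality is obtained from the same input.

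For $\P\in\{\fid,\gor\}$ more input is needed. By Bass's formula and the (proven) Bass conjecture, finite injective dimension of $M_\p$ forces $R_\p$ to be Cohen--Macaulay, so the regular sequence above may be chosen to be a system of parameters of $R_\p$; then $M_\p/(x_1,\dots,x_t)M_\p$ has finite injective dimension over the Artinian ring $R_\p/(x_1,\dots,x_t)$. Passing to $\q\in W$, I would invoke the change-of-rings argument relating the Bass numbers of $M_\q$ over $R_\q$ to those of $M_\q/(x_1,\dots,x_t)M_\q$ over $R_\q/(x_1,\dots,x_t)$, together with finite injective dimension of $(M/\p M)_\q$ over $(R/\p)_\q$, to conclude that $\Ext^i_{R_\q}(k(\q),M_\q)=0$ for $i\gg 0$, so $\id_{R_\q}M_\q<\infty$. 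The Gorenstein case then reduces to the $\fid$ case applied to $R$ itself, combined with the openness of the Cohen--Macaulay locus along $\V(\p)$.

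The principal obstacle is the $\fid$ (and hence $\gor$) case: unlike depth, finite injective dimension is not witnessed by a single regular sequence but only by asymptotic vanishing of Bass numbers, and propagating this vanishing uniformly along $\V(\p)$ is the technical heart of the argument. The depth-based cases should be comparatively routine once the openness of the $M$-regular locus and the standard depth--dimension comparisons are in place.
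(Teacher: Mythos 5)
There is a genuine gap, concentrated in exactly the place you flag as ``the technical heart.'' Your $\fid$ argument cuts $M_\p$ down by a system of parameters of $R_\p$ that is simultaneously an $M_\p$-regular sequence; but Bass's formula gives $\id_{R_\p}M_\p=\depth R_\p$, not $\depth M_\p=\dim R_\p$, so such a sequence exists only when $M_\p$ is maximal Cohen--Macaulay, which is not part of the hypothesis of {\rm(NC)$^\ast$} for $\fid$. (The paper's Theorem \ref{gor} is precisely the statement you are reaching for, and it carries the extra assumption $\p\in\mcm_R(M)$ for this reason.) The paper's route around this is different: it first uses generic freeness of $M/\p M$ over the domain $R/\p$ to convert the hypothesis on the $\fid$-locus of $M/\p M$ into the statement that $\gor(R/\p)$ contains a nonempty open set (Lemma \ref{mod NC gor}), and then invokes \cite[Proposition 2.4]{T}, which asks for generic finiteness of injective dimension of \emph{each} module $\Ext_R^j(R/\p,M)$, $0\le j\le\height\p$, over $R/\p$ --- these are generically free over $R/\p$, so the Gorenstein hypothesis on $R/\p$ suffices, with no MCM assumption anywhere. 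Without that (or an equivalent device) your $\fid$ and $\gor$ cases do not close.

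Two further points are understated in the ``routine'' cases. First, $(\S_n)$ at $\q$ quantifies over all $\q'\subseteq\q$, including primes \emph{not} containing $\p$, so verifying $\q\in\S_n^R(M)$ cannot be done purely from data at $\p$ and at $\q/\p$; the paper's Theorem \ref{Sn nc} needs a separate grade estimate ($\operatorname{grade}(\q+\p/\p,R/\p)\ge n$ when $\height(\q+\p/\p)>n$, via \cite[Proposition 1.2.10]{BH}) to produce a length-$n$ $M_{\q'}$-regular sequence for such $\q'$, and Theorem \ref{Sn open} is designed so that one only ever needs the hypothesis at primes of small height. Second, after spreading out your regular sequence $\xx$, the module $M_\q/\xx M_\q$ is not $(M/\p M)_\q$ (the ideal $(\xx)$ only has radical $\p$ locally), so comparing $\depth M_\q$ with depth data over $(R/\p)_\q$ requires the generic freeness of the quotients $\p^{i}M/\p^{i+1}M$ over $R/\p$ and the filtration argument of Lemma \ref{filtration}. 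Your overall frame --- topological Nagata criterion plus spreading out regular sequences --- is the paper's, but these three ingredients (generic freeness of $M/\p M$ to reduce to ring-level hypotheses, the Ext-module device for $\fid$, and the grade argument for $(\S_n)$) are what actually make it work.
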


The organization of this paper is as follows. 
Section 2 is denoted to preliminaries for the later sections.
In Sections 3 and 4, we study the openness of the $\fid$-locus of a module.
We give a sufficient condition for the $\fid$-locus of a module to be open, and obtain Theorem \ref{main1}.
In Section 5, we observe that the same results as we gave in the previous sections hold for the Cohen--Macaulay property.
In Section 6, we consider when the $(\S_n)$ and $(\T_n)$-loci of a module are open.
It is seen that if the $\mcm$-locus (resp. the $\cm$-locus) is open, then so is the $(\S_n)$-locus (resp. the $(\T_n)$-locus).
In Section 7, we prove Theorem \ref{main2}.

\section{Notation and lemmas}

In this section, we state the definitions of notions used in this paper, and give basic lemmas about the Zariski topology.
Throughout the present paper, let $R$ be a ring.
Let $\P$ be a property of local rings.
The subset $\P(R)=\{\p\in\spec (R)\mid \P$ holds for $R_\p\}$ of $\spec (R)$ is called the $\P$-locus of $R$.
Similarly, if $\P$ is a property of modules over a local ring, 
then $\P_R (M)=\{\p\in\spec (R)\mid \P$ holds for $M_\p\}$ is called the $\P$-locus of $M$ (over $R$) for an $R$-module $M$.

\begin{dfn}\label{def}
Let $(R,\m,k)$ be a local ring, $M$ an $R$-module, and $n$ an integer.
\begin{itemize}
\item If $\dim M \le \depth M$, then $M$ is called a \textit{Cohen-Macaulay module}.
\item If $\dim R \le \depth M$, then $M$ is called a \textit{maximal Cohen-Macaulay module}.
\item An $R$-module $M$ is a \textit{Gorenstein module} (\textit{of typr} $r$) if 
$$
\dim_k\Ext_R^i(k,M)=
\begin{cases}
r&(i=\dim R),\\
0&(i\ne \dim R).
\end{cases}
$$
\item A Gorenstein module of type $1$ is called a \textit{canonical module}.
\end{itemize}
In general, let $R$ be a (not necessarily local) ring, and $M$ an $R$-module.
We denote by $\operatorname{Ann}_R (M)$ the annihilator ideal of $M$.
The injective dimension of $M$ is denoted by $\id_{R}M$.
We say that $M$ is a Cohen-Macaulay module (resp. a maximal Cohen-Macaulay module, a Gorenstein module, and a canonical module) if so is the module $M_\m$ over the local ring $R_\m$ for every maximal ideal $\m$ of $R$. 
If $R$ is itself a Cohen-Macaulay (resp. Gorenstein) module, then it is called a Cohen-Macaulay (resp. Gorenstein) ring.
We say that
\begin{itemize}
\item $M$ satisfies Serre's condition $(\S_n)$ if $\depth M_\p \ge {\rm inf}\{n, \height\p \}$ for all $\p\in\spec (R)$, and
\item $M$ satisfies Serre's condition $(\T_n)$ if $\depth M_\p \ge {\rm inf}\{n, \dim M_\p \}$ for all $\p\in\spec (R)$.
\end{itemize}
\end{dfn}

In this paper, the following notation is used.

\begin{dfn}
Let $M$ be an $R$-module, $I$ an ideal of $R$, $f$ an element of $R$, and $n$ an integer.
\begin{itemize}
\item $\D(f) = \{\p\in\spec (R)\mid f\notin\p\}$.
\item $\D(I) = \{\p\in\spec (R)\mid I\nsubseteq\p\}$.
\item $\V(I) = \{\p\in\spec (R)\mid I\subseteq\p\}$.
\item $\supp_R (M) = \{\p\in\spec (R)\mid$ The $R_\p$-module $M_\p$ is nonzero$ \}$.
\item $\free_R (M) = \{\p\in\spec (R)\mid$ The $R_\p$-module $M_\p$ is free$ \}$.
\item $\cm(R) = \{\p\in\spec (R)\mid$ The local ring $R_\p$ is Cohen--Macaulay$ \}$.
\item $\gor(R) = \{\p\in\spec (R)\mid$ The local ring $R_\p$ is Gorenstein$ \}$.
\item $\S_n(R) = \{\p\in\spec (R)\mid$ The $R_\p$-module $R_\p$ satisfies $(\S_n) \}$.
\item $\cm_R (M) = \{\p\in\spec (R)\mid$ The $R_\p$-module $M_\p$ is Cohen--Macaulay$ \}$.
\item $\mcm_R (M) = \{\p\in\spec (R)\mid$ The $R_\p$-module $M_\p$ is maximal Cohen--Macaulay$ \}$.
\item $\fid_R (M) = \{\p\in\spec (R)\mid$ The $R_\p$-module $M_\p$ has finite injective dimension$ \}$.
\item $\gor_R (M) = \{\p\in\spec (R)\mid$ The $R_\p$-module $M_\p$ is Gorenstein$ \}$.
\item $\S_n^R(M) = \{\p\in\spec (R)\mid$ The $R_\p$-module $M_\p$ satisfies $(\S_n) \}$.
\item $\T_n^R(M) = \{\p\in\spec (R)\mid$ The $R_\p$-module $M_\p$ satisfies $(\T_n) \}$.
\end{itemize}
\end{dfn}

\begin{rmk}\label{rmk1}
Note that the zero module is thought of as Cohen-Macaulay, maximal Cohen-Macaulay, and Gorenstein.
It is also considered to satisfy $(\S_n)$ and $(\T_n)$.
We see that an $R$-module $M$ is a Gorenstein module if and only if it is maximal Cohen--Macaulay and has finite injective dimension; see \cite[Theorem 1.2.8, Proposition 3.1.14, and Theorem 3.1.17]{BH}.
Hence, we have $\gor_R (M)=\fid_R (M)\cap\mcm_R (M)$.
\end{rmk}

From now on, $\spec (R)$ will be considered to be with the Zariski topology, and any subset of $\spec (R)$ will be considered to be with the subspace topology.
Below is called the \textit{topological Nagata criterion}.

\begin{lem}\cite[Theorem 24.2]{Mat}\label{Mat24.2}
Let $U$ be a subset of $\spec (R)$.
Then $U$ is open if and only if the following two statements hold true.
\begin{enumerate}[\rm(1)]
\item $U$ is stable under generalization, that is, if $\p\in U$ and $\q\in \spec(R)$ with $\q\subseteq \p$, then $\q\in U$. 
\item $U$ contains a nonempty open subset of $\V(\p)$ for all $\p\in U$.
\end{enumerate}
\end{lem}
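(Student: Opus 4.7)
The forward implication is routine: every open subset of $\spec(R)$ is a union of basic opens $\D(f)$, each of which is stable under generalization (if $f\notin\p$ and $\q\subseteq\p$ then $f\notin\q$), so (1) holds; and for $\p\in U$ the intersection $U\cap\V(\p)$ is a nonempty open neighborhood of $\p$ inside $\V(\p)$, so (2) holds. I would dispatch this in a few lines.

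For the converse, the plan is Noetherian induction on the closed subsets of $\spec(R)$. Let $\mathcal{F}$ be the family of closed subsets $C\subseteq\spec(R)$ for which $U\cap C$ fails to be open in $C$; since $\spec(R)$ is a Noetherian space, if $\mathcal{F}\ne\emptyset$ it admits a minimal element $C$, and the goal is to derive a contradiction. First I would rule out reducibility: if $C=C_1\cup C_2$ with both $C_i\subsetneq C$ closed, then by minimality each $U\cap C_i$ is open in $C_i$, so its complement $Z_i$ in $C_i$ is closed in $C$, and then $C\setminus U=Z_1\cup Z_2$ is closed in $C$, a contradiction. Thus $C=\V(\p)$ for some prime $\p$.

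The two cases according to whether $\p\in U$ then finish the argument. If $\p\notin U$, then for any $\q\in U\cap\V(\p)$ the relation $\p\subseteq\q$ combined with (1) forces $\p\in U$, a contradiction; hence $U\cap\V(\p)=\emptyset$, which is open, contradicting $\V(\p)\in\mathcal{F}$. If $\p\in U$, then by (2) there is a nonempty open subset $W$ of $\V(\p)$ with $W\subseteq U$; the complement $\V(\p)\setminus W$ is a proper closed subset of $\V(\p)$ and hence a closed subset of $\spec(R)$ strictly smaller than $C$, so by minimality $U\cap(\V(\p)\setminus W)$ is open in $\V(\p)\setminus W$. Combining this with $W\subseteq U\cap\V(\p)$ and taking complements in $\V(\p)$ shows that $\V(\p)\setminus U$ is closed in $\V(\p)$, again contradicting $\V(\p)\in\mathcal{F}$.

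There is no deep obstacle here; the main organizational point is to separate the reducible and irreducible cases and apply the correct hypothesis to each, and the only real subtlety is using (1) in its correct direction (as a statement about generalizations, which propagates membership in $U$ \emph{downward} from $\q$ to $\p$ when $\p\subseteq\q$) in order to rule out the case $\p\notin U$.
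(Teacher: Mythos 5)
Your proof is correct and complete; note, though, that the paper does not prove this lemma at all --- it simply cites \cite[Theorem 24.2]{Mat} --- so the comparison is really with Matsumura's argument. That argument is organized differently: one sets $X=\spec(R)\setminus U$, writes its closure as $\V(I)$, and shows that every minimal prime $\p_i$ of $I$ lies outside $U$ (if $\p_i\in U$, one shrinks to a basic open $\D(f)$ on which $\V(I)$ coincides with $\V(\p_i)$ and on which $\V(\p_i)\subseteq U$ by hypothesis (2), contradicting the fact that $\D(f)$ must meet $X$); then hypothesis (1), in contrapositive form, forces all of $\V(I)=\bigcup_i\V(\p_i)$ to lie in $X$, so $X$ is closed. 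The paper re-enacts exactly this closure-of-the-complement argument in its proof of Theorem 6.1, which is why it is worth knowing. Your Noetherian induction on closed subsets reaches the same conclusion by a cleaner-looking dichotomy (reducible versus irreducible minimal bad set, then $\p\in U$ versus $\p\notin U$), at the cost of invoking the minimal-counterexample machinery; Matsumura's version is more hands-on and produces the defining ideal of the complement explicitly, which is what the later applications in the paper actually exploit. Both proofs use each hypothesis exactly once and in the same way: (2) to handle the prime that does lie in $U$, and (1) to push non-membership up from a minimal prime to everything above it (equivalently, in your phrasing, to push membership down from $\q$ to $\p$).
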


Note that $\cm_R (M)$, $\mcm_R (M)$, $\fid_R (M)$, $\gor_R (M)$, $\S_n^R(M)$, and $\T_n^R(M)$ are stable under generalization for any $R$-module $M$.
Therefore, in order to show that each of these subsets is open, it suffices to verify that it satisfies (2) in the above lemma.
The following two lemmas are useful for that.

\begin{lem}\label{replace}
Let $\p$ be a prime ideal of $R$, and $f\in R\setminus\p$.
Let $F:\D(f)\to \spec (R_f)$ be the natural homeomorphism, $S$ a subset of $\spec (R)$, and $T$ the image of $\D(f)\cap S$ by $F$.
Then $S$ contains a nonempty open subset of $\V(\p)$ if and only if $T$ contains a nonempty open subset of $\V(\p R_f)$.
\end{lem}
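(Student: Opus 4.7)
The plan is to observe that the homeomorphism $F:\D(f)\to\spec(R_f)$ restricts to a homeomorphism between the open subset $\V(\p)\cap\D(f)$ of $\V(\p)$ and the whole of $\V(\p R_f)$, and then to transport witnessing open sets back and forth along this restriction. Since $f\notin\p$, the prime $\p$ itself lies in $\V(\p)\cap\D(f)$, so this intersection is a nonempty open subset of $\V(\p)$; and from the formula $F(\q)=\q R_f$ one sees that $F$ maps $\V(\p)\cap\D(f)$ bijectively onto $\V(\p R_f)$, so the restriction is indeed a homeomorphism of subspaces.

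For the $(\Rightarrow)$ direction, suppose $W\subseteq S$ is a nonempty open subset of $\V(\p)$. The key observation is that every nonempty open subset of $\V(\p)$ must contain $\p$ itself: write $W=\V(\p)\cap\D(I)$ for some ideal $I$, pick any $\q\in W$, and note that $\p\subseteq\q$ together with $I\nsubseteq\q$ forces $I\nsubseteq\p$, so $\p\in\D(I)\cap\V(\p)=W$. Consequently $\p\in W\cap\D(f)$, so $W\cap\D(f)$ is a nonempty open subset of $\V(\p)$ (and hence of $\V(\p)\cap\D(f)$) contained in $S\cap\D(f)$. Transporting by the restricted homeomorphism, $F(W\cap\D(f))$ is a nonempty open subset of $\V(\p R_f)$ contained in $T$.

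For the converse, given a nonempty open subset $W'\subseteq T$ of $\V(\p R_f)$, set $W=F^{-1}(W')$. By the homeomorphism, $W$ is a nonempty open subset of $\V(\p)\cap\D(f)$; and since $\V(\p)\cap\D(f)$ is open in $\V(\p)$, $W$ is open in $\V(\p)$ as well. Finally $F^{-1}(T)=\D(f)\cap S$ because $F$ is injective on $\D(f)$, so $W\subseteq S$.

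The argument is essentially topological bookkeeping, so there is no serious obstacle. The only step requiring a moment of thought is the observation that any nonempty open set in $\V(\p)$ automatically contains $\p$, which is exactly what lets us shrink a witnessing open set into $\D(f)$ without losing nonemptiness when we pass from $\spec(R)$ to $\spec(R_f)$.
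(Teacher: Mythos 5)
Your proof is correct and follows essentially the same route as the paper: the central point in both is that a nonempty open subset of $\V(\p)$ must contain $\p$ itself (equivalently, openness is stable under generalization), which lets one intersect the witnessing open set with $\D(f)$ without losing nonemptiness, and then transport it along the homeomorphism $\V(\p)\cap\D(f)\cong\V(\p R_f)$; the converse direction is handled identically in both arguments.
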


\begin{proof}
Suppose that there exists an open subset $U$ of $\spec (R)$ such that $\V(\p)\cap U$ is nonempty and is contained in $S$.
There exists a prime ideal $\q$ of $R$, which belongs to $U$ and contains $\p$.
Since $U$ is open, $\p$ is in $U$.
Hence $\D(f)\cap \V(\p)\cap U$ is a nonempty open subset of $\D(f)\cap \V(\p)$.
Note that $F$ induces a homeomorphism between $\D(f)\cap \V(\p)$ and $\V(\p R_f)$.
We see that the subset $F(\D(f)\cap \V(\p)\cap U)$ of $F(\D(f)\cap S)=T$ is a nonempty open subset of $\V(\p R_f)$.

Conversely, if there exists a nonempty open subset $U$ of $\V(\p R_f)$ that is contained in $T$, then the subset $F^{-1}(U)$ of $F^{-1}(T)=\D(f)\cap S$ is a nonempty open subset of $\D(f)\cap \V(\p)$.
As $\D(f)$ is an open subset of $\spec (R)$, $F^{-1}(U)$ is also an open subset of $\V(\p)$.
\end{proof}

\begin{lem}\label{replace2}
Let $I$ be an ideal of $R$, and $\p\in\V(I)$.
Let $F:\V(I)\to \spec (R/I)$ be the natural homeomorphism, $S$ a subset of $\spec (R)$, and $T$ the image of $\V(I)\cap S$ by $F$.
Then $S$ contains a nonempty open subset of $\V(\p)$ if and only if $T$ contains a nonempty open subset of $\V(\p/I)$.
\end{lem}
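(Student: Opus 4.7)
The plan is to mimic the proof of Lemma \ref{replace} almost verbatim, with $\D(f)$ replaced by $\V(I)$ and the localization $R_f$ replaced by the quotient $R/I$. The key point that makes the argument work in the same way is that $\V(\p)\subseteq\V(I)$ (since $I\subseteq\p$), so $F$ restricts to a homeomorphism $\V(\p)\to\V(\p/I)$ between subspaces of $\spec(R)$ and $\spec(R/I)$ respectively.

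For the forward direction, suppose there is an open $U\subseteq\spec(R)$ with $\emptyset\ne\V(\p)\cap U\subseteq S$. Since $\V(\p)\subseteq\V(I)$, we have $\V(\p)\cap U=\V(I)\cap\V(\p)\cap U\subseteq\V(I)\cap S$, hence
\[
F(\V(\p)\cap U)\subseteq F(\V(I)\cap S)=T.
\]
Because $F$ is a homeomorphism $\V(I)\to\spec(R/I)$ carrying $\V(\p)$ onto $\V(\p/I)$, the image $F(\V(\p)\cap U)$ is a nonempty open subset of $\V(\p/I)$.

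For the converse, suppose $U$ is a nonempty open subset of $\V(\p/I)$ contained in $T$. Then $F^{-1}(U)\subseteq F^{-1}(T)=\V(I)\cap S\subseteq S$, and $F^{-1}(U)$ is nonempty. It remains to see that $F^{-1}(U)$ is open in $\V(\p)$ regarded as a subspace of $\spec(R)$. Choose an open $W\subseteq\spec(R/I)$ with $U=\V(\p/I)\cap W$, and lift $W$ to an open $W'\subseteq\spec(R)$ with $\V(I)\cap W'=F^{-1}(W)$; then
\[
\V(\p)\cap W'=\V(\p)\cap\V(I)\cap W'=\V(\p)\cap F^{-1}(W)=F^{-1}(\V(\p/I)\cap W)=F^{-1}(U),
\]
which exhibits $F^{-1}(U)$ as a nonempty open subset of $\V(\p)$ contained in $S$.

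I do not anticipate any real obstacle; the only thing to be careful about is recording that the subspace topology on $\V(\p)$ inherited from $\V(I)$ agrees with that inherited from $\spec(R)$, which is automatic because $\V(\p)\subseteq\V(I)$ and $\V(I)$ itself carries the subspace topology. Everything else is a formal translation through the homeomorphism $F$.
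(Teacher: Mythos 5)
Your proof is correct and follows essentially the same route as the paper's: both rest on the observation that $F$ restricts to a homeomorphism $\V(\p)\to\V(\p/I)$ and then translate open subsets back and forth. The extra care you take in the converse direction (lifting the open set $W$ to $W'$) is just an explicit verification of the transitivity of the subspace topology, which the paper leaves implicit.
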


\begin{proof}
Note that $F$ induces a homeomorphism between $\V(\p)$ and $\V(\p/I)$.
Suppose that $U$ is a nonempty open subset of $\V(\p)$ that is contained in $S$.
Then $F(U)$ is a nonempty open subset of $\V(\p/I)$ and is contained in $F(\V(I)\cap S)=T$.
Conversely, if $U$ is a nonempty open subset of $\V(\p/I)$ that is contained in $T$, then $F^{-1}(U)$ is a nonempty open subset of $\V(\p)$ and is contained in $F^{-1}(T)=\V(I)\cap S$.
\end{proof}

We prepare an elementary lemma.

\begin{lem}\label{base}
Let $M$ be an $R$-module, and let $\p$ be a prime ideal of $R$.
\begin{enumerate}[\rm(1)]
\item $\p$ belongs to any nonempty open subset of $\V(\p)$.
In particular, the zero ideal of $R/\p$ belongs to any nonempty open subset of $\spec (R/\p)$.
\item If $M_\p=0$, then $M_f=0$ for some $f\in R\setminus\p$.
\item Suppose that a sequence $\bm{x}=x_1,\ldots,x_n$ of elements in $\p$ is an $M_\p$-regular sequence.
Then there exists $f\in R\setminus\p$ such that $\bm{x}$ is an $M_f$-regular sequence.
\item If $\p$ is a minimal prime ideal of an ideal $I$, then $\sqrt{I R_f}=\p R_f$ for some $f\in R\setminus\p$.
\item  If $R$ is an integral domain, then $M_f$ is a free $R_f$-module for some $f\in R\setminus\{0\}$.
\item Suppose that $\p^r M=0$ for some $r>0$.
Then there exists $f\in R\setminus\p$ such that $(\p^{i-1} M/\p^i M)_f$ is a free $(R/\p)_f$-module for each $1\le i\le r$.
\end{enumerate}
\end{lem}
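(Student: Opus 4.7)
The plan is to handle the six parts separately, each being an elementary consequence of finite generation or Noetherianness. Part (1) I would deduce from the fact that basic open sets are stable under generalization: if $I\nsubseteq \q$ and $\q'\subseteq \q$, then $I\nsubseteq \q'$. A nonempty open subset $U\subseteq \V(\p)$ has the form $\V(\p)\cap W$ for some open $W\subseteq \spec(R)$, and $W$ contains some prime $\q\supseteq\p$, hence $\p\in W\cap \V(\p)=U$. Part (2) I would prove by choosing generators $m_1,\dots,m_s$ of $M$; since $M_\p=0$, each $m_i$ is annihilated by some $f_i\in R\setminus\p$, and $f=f_1\cdots f_s$ kills $M$ after localization.

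For Part (3) I would induct on $n$. The case $n=1$ follows from Part (2) applied to $\ker(x_1\cdot\colon M\to M)$, which vanishes at $\p$ by hypothesis. For the induction step, use the inductive hypothesis to find $g\in R\setminus\p$ making $x_1,\dots,x_{n-1}$ an $M_g$-regular sequence; then $x_n$ is a nonzerodivisor on $(M/(x_1,\dots,x_{n-1})M)_\p$, so the $n=1$ case supplies a further $h\in R\setminus\p$, and $f=gh$ works, as regularity is preserved under localization. For Part (4) I would use finiteness of minimal primes: let $\p=\p_0,\p_1,\dots,\p_k$ be the minimal primes of $I$, choose $f_i\in \p_i\setminus\p$ for $i\ge 1$ (possible since $\p$ is minimal over $I$), and set $f=f_1\cdots f_k$. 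Inverting $f$ removes $\p_1,\dots,\p_k$ from $\spec(R_f)$, leaving $\p R_f$ as the unique minimal prime over $IR_f$.

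For Part (5) I would let $K=\operatorname{Frac}(R)$ and $r=\dim_K(M\otimes_R K)$, choose $m_1,\dots,m_r\in M$ whose images form a $K$-basis, and let $N=\sum_i Rm_i$. The $m_i$ are $R$-linearly independent because $R\hookrightarrow K$, so $N\cong R^r$; the quotient $M/N$ is finitely generated torsion, hence annihilated by some $f\in R\setminus\{0\}$, giving $M_f=N_f\cong R_f^r$. For Part (6) I would observe that each $\p^{i-1}M/\p^iM$ is a finitely generated $R/\p$-module, and $R/\p$ is an integral domain. Applying Part (5) in $R/\p$ produces, for each $1\le i\le r$, an element $\bar g_i\in R/\p\setminus\{0\}$ whose lift $g_i\in R\setminus\p$ makes $(\p^{i-1}M/\p^iM)_{g_i}$ free over $(R/\p)_{g_i}$. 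Taking $f=g_1\cdots g_r$ then works because freeness is preserved under further localization and $(R/\p)_f=R_f/\p R_f$.

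The main obstacle here is nothing deeper than bookkeeping: one must verify in Parts (3) and (6) that when several individual witnesses $g_i$ are merged into a single $f$ by taking their product, all the previously established properties (vanishing, regularity, freeness) remain valid after the extra localization. This is standard but should be noted explicitly.
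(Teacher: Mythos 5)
Your proposal is correct and follows essentially the same route as the paper: parts (1)--(4) are argued identically (generalization-stability of opens, the annihilator/generators for (2), induction via the kernel of multiplication for (3), separating the other minimal primes for (4)), and for (5) and (6) you simply unpack the standard proof of \cite[Theorem 24.1]{Mat}, which the paper cites instead. The one detail worth adding in (3) is the nonvanishing requirement in the definition of a regular sequence: you should note, as the paper does, that $M_f/\xx M_f\neq 0$ because $\p\in\supp_R(M_f/\xx M_f)$, which is immediate since $\xx$ is an $M_\p$-regular sequence.
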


\begin{proof}
(1): An analogous argument to the former part of the proof of Lemma \ref{replace} shows the assertion.

(2): There is an element $f\in\operatorname{Ann}_R (M)\setminus\p$ since $\p$ is not in $\supp_R (M)$.
We obtain $M_f=0$.

(3): We may assume $n=1$.
Let $\phi$ be the multiplication map of $M$ by $x_1$.
We have $(\ker \phi)_\p=0$ because $x_1$ is an $M_\p$-regular element.
It follows from (2) that $(\ker \phi)_f=0$ for some $f\in R\setminus\p$.
Then $x_1$ is an $M_f$-regular element.
Note that $M_f/x_1 M_f\ne 0$ since $\p$ belongs to $\supp_R (M_f/x_1 M_f)$.

(4): Let $\p=\p_1, \p_2, \ldots, \p_n$ be all the minimal prime ideals of $I$.
Since $\p_i\nsubseteq\p$ for any $2\le i\le n$, there exists $f\in R\setminus\p$ such that $f\in\p_i$ for each $2\le i\le n$.
We easily obtain $\sqrt{I R_f}=\p R_f$.

(5) and (6): The assertions follow from \cite[Theorem 24.1]{Mat}.
\end{proof}

\begin{rmk}\label{rmk2}
Let $M$ be an $R$-module, $\p$ a prime ideal of $R$, and $\P\in\{\fid, \gor, \cm, \mcm, (\S_n), (\T_n)\}$.
It is well-known fact that $\spec (R)\setminus\supp_R (M)$ is an open subset of $\spec (R)$.
If $\p$ belongs to $\spec (R)\setminus\supp_R (M)$, then $\V(\p)\cap(\spec (R)\setminus\supp_R(M))$ is a nonempty open subset of $\V(\p)$, and it is contained in the $\P$-locus of $M$; see Remark \ref{rmk1}.
\end{rmk}

\section{The openness of the $\fid$-locus of a module}

In this section, we study the openness of the $\fid$-locus of a module by relating it to the $\gor$-locus of a ring.
More precisely, we examine the relationship between the hypothesis of the Nagata criterion for the Gorenstein property and the openness of the finite injective dimension locus.
Recall a few definitions that will be used in the following; see \cite{Sha, Sh}.

\begin{dfn}
Let $S$ be a ring.
A ring homomorphism $\phi:R\to S$ is said to be a \textit{Gorenstein homomorphism} if $\phi$ is flat and all the fiber rings of $\phi$ are Gorenstein.
\end{dfn}

\begin{dfn}
A ring $R$ is said to be \textit{well-fibered} if the natural ring homomorphism from $R_\p$ to its completion is Gorenstein for every prime ideal $\p$ of $R$.
\end{dfn}

\begin{dfn}
A ring $R$ is said to be acceptable if the following three conditions are satisfied.
\begin{enumerate}[\rm(1)]
\item $R$ is universally catenary.
\item For all finitely generated $R$-algebras $S$, $\gor(S)$ is open.
\item $R$ is well-fibered.
\end{enumerate}
\end{dfn}

\begin{ex}
\begin{enumerate}[\rm(1)]
\item An Artinian ring is well-fibered.
\item Let $S=R[X_1, \ldots X_n]$ be a polynomial ring over $R$. 
The natural ring homomorphism $\phi:R \to S$ is Gorenstein since all the fiber rings of $\phi$ are regular.
\item Any homomorphic image of a Gorenstein ring is acceptable; see \cite{GM, Sh}.
\end{enumerate}
\end{ex}

The key role is played by the lemma below.

\begin{lem}\cite[Proposition 2.4]{T}\label{T 2.4}
Let $M$ be an $R$-module, and let $\p\in\fid_R (M)$.
Suppose that $\fid_{R/\p} (\Ext_R^j(R/\p,M))$ contains a nonempty open subset of $\spec (R/\p)$ for each integer $j$ with $0\le j\le \height\p$.
Then $\fid_{R} (M)$ contains a nonempty open subset of $\V(\p)$.
\end{lem}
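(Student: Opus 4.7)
The plan is to combine the topological Nagata criterion (Lemma~\ref{Mat24.2}) with a change-of-rings spectral sequence. Since $\fid_R(M)$ is stable under generalization, it suffices to find $f\in R\setminus\p$ such that $\V(\p)\cap\D(f)\subseteq\fid_R(M)$. The case $M_\p=0$ follows from Lemma~\ref{base}(2), so I assume $M_\p\ne 0$; the Bass formula then gives $\id_{R_\p}M_\p=\depth R_\p\le\height\p$, so $\Ext^j_{R_\p}(k(\p),M_\p)=0$ for $j>\height\p$. Applying the hypothesis for each $j\in\{0,\ldots,\height\p\}$ together with Lemma~\ref{base}(1) produces finitely many witnesses $g_j\in R\setminus\p$; after replacing $R$ by $R_g$ with $g=\prod_j g_j$ via Lemma~\ref{replace}, I may assume the $R/\p$-module $\Ext^j_R(R/\p,M)$ has finite injective dimension at every prime, for all $0\le j\le\height\p$.

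For any $\q\in\V(\p)$, the key tool is the Cartan--Eilenberg change-of-rings spectral sequence
\[
E_2^{p,q}=\Ext^p_{R/\p}\bigl(R/\q,\,\Ext^q_R(R/\p,M)\bigr)\ \Longrightarrow\ \Ext^{p+q}_R(R/\q,M),
\]
which, localized at $\q$, converges to $\Ext^{p+q}_{R_\q}(k(\q),M_\q)$. Finiteness of $\id_{R_\q}M_\q$ is equivalent to the vanishing of these abutment terms for $p+q\gg 0$, so it suffices to bound the nonvanishing of $E_2^{p,q}$. For $0\le q\le\height\p$, the localization step ensures $\Ext^q_R(R/\p,M)_\q$ has finite injective dimension over the local ring $(R/\p)_{\q/\p}$, and Bass bounds this dimension by $\depth(R/\p)_{\q/\p}\le\height(\q/\p)$; hence $E_2^{p,q}=0$ at $\q$ for $p>\height(\q/\p)$, confining contributions from this range to $p+q\le\height\p+\height(\q/\p)$.

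The hard part is controlling $E_2^{p,q}$ in the range $q>\height\p$. Each individual module $\Ext^q_R(R/\p,M)$ vanishes at $\p$, so Lemma~\ref{base}(2) provides an element $h_q\in R\setminus\p$ killing it on $\D(h_q)$; however, infinitely many values of $q$ lie in this range, so simultaneous localization is not directly possible. To overcome this, I would establish a uniform threshold: a bounded complex of length at most $\height\p$ representing the finite injective resolution of $M_\p$ should be propagated to a neighborhood $\D(h)$ of $\p$ with $h\notin\p$, ensuring that $\Ext^q_R(R/\p,M)$ vanishes on $\D(h)\cap\V(\p)$ for all $q$ above a fixed bound $q_0$. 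With uniform vanishing in hand, only finitely many diagonals of the spectral sequence carry nonzero $E_2$-terms at $\q$, yielding $\Ext^n_{R_\q}(k(\q),M_\q)=0$ for $n\gg 0$ and hence $\q\in\fid_R(M)$; the open neighborhood $\V(\p)\cap\D(gh)$ then completes the proof.
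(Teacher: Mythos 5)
The paper does not actually prove this lemma; it imports it verbatim from Takahashi as \cite[Proposition 2.4]{T}, so your argument has to stand entirely on its own. Your overall strategy --- shrink to a neighbourhood via Lemma~\ref{replace}, run the change-of-rings spectral sequence $\Ext^p_{R/\p}(R/\q,\Ext^q_R(R/\p,M))\Rightarrow\Ext^{p+q}_R(R/\q,M)$ localized at $\q$, and detect $\id_{R_\q}M_\q<\infty$ through eventual vanishing of the abutment --- is reasonable, and your treatment of the rows $0\le q\le\height\p$ is correct: finitely many witnesses $g_j$ can be intersected, and the Bass equality $\id=\depth\le\dim$ kills $E_2^{p,q}$ for $p>\height(\q/\p)$ in that range.

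The genuine gap is exactly where you flag it: the rows $q>\height\p$. You need a single $h\in R\setminus\p$ with $\Ext^q_R(R/\p,M)_h=0$ for \emph{all} $q>\height\p$ simultaneously, and what you offer is the hope that ``a bounded complex of length at most $\height\p$ representing the finite injective resolution of $M_\p$ should be propagated to a neighborhood $\D(h)$.'' That mechanism cannot work as described. The terms of an injective resolution of $M_\p$ are not finitely generated, so there is no generic-freeness-style spreading out for them; and if a finite injective resolution of $M_\p$ really did extend to a finite injective resolution of $M_h$ over $R_h$, then localizing it would give $\D(h)\subseteq\fid_R(M)$ outright, i.e.\ the $\fid$-locus would contain an open neighbourhood of each of its points with no hypothesis on the modules $\Ext^j_R(R/\p,M)$ whatsoever. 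Since $\gor(R)=\fid_R(R)$ and the Gorenstein locus of a general noetherian ring is famously \emph{not} open (this failure is the entire reason the Nagata-criterion hypotheses exist), any such unconditional propagation is false. The uniform vanishing of the high $\Ext$ modules on a neighbourhood of $\p$ in $\V(\p)$ is the actual crux of the cited proposition; it is left unproved in your write-up, and the one justification you sketch for it proves too much. Everything after that point is conditional on this missing step.
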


The main result of this section is the following theorem, whose proof uses the above lemma.

\begin{thm}\label{thmA}
Let $M$ be an $R$-module, and let $\p\in\fid_R (M)$.
Suppose that $\gor(R/\p)$ contains a nonempty open subset of $\spec (R/\p)$.
Then $\fid_{R} (M)$ contains a nonempty open subset of $\V(\p)$.
\end{thm}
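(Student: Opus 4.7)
The plan is to reduce the statement to an application of Lemma \ref{T 2.4}. Since $\p \in \fid_R(M)$ is given, it suffices to verify that for each integer $j$ with $0 \le j \le \height\p$, the set $\fid_{R/\p}(\Ext_R^j(R/\p, M))$ contains a nonempty open subset of $\spec(R/\p)$; note that the range of $j$ is finite because $R$ is noetherian.

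Set $N_j := \Ext_R^j(R/\p, M)$, which is a finitely generated module over the domain $R/\p$. I would apply Lemma \ref{base}(5) to each $N_j$ individually to produce an element $f_j \in R\setminus\p$ such that $(N_j)_{f_j}$ is a free $(R/\p)_{\bar{f_j}}$-module. Setting $f := f_0 f_1 \cdots f_{\height\p} \in R\setminus\p$ and inverting this single element $f$ simultaneously trivializes all the $N_j$: the localization $(N_j)_f$ is free over $(R/\p)_{\bar{f}}$ for every $j$ in the range.

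The hypothesis supplies a nonempty open subset $U$ of $\spec(R/\p)$ contained in $\gor(R/\p)$. I would then consider $W := U \cap \D(\bar{f})$. This is a nonempty open subset of $\spec(R/\p)$, because by Lemma \ref{base}(1) the zero ideal of $R/\p$ lies in $U$, and it lies in $\D(\bar{f})$ as well since $f \notin \p$. For any prime $\q/\p \in W$, the local ring $(R/\p)_{\q/\p}$ is Gorenstein and $(N_j)_{\q/\p}$ is free over it; free modules over a Gorenstein local ring have finite injective dimension (bounded by the dimension of the ring). Hence $W \subseteq \fid_{R/\p}(N_j)$ for every $j$, and Lemma \ref{T 2.4} yields the desired nonempty open subset of $\V(\p)$ inside $\fid_R(M)$.

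No significant obstacle arises; the only delicate point is combining the finitely many elements $f_j$ into a single $f$, which works precisely because $\height\p < \infty$, and ensuring the intersection $U\cap \D(\bar{f})$ is nonempty, which is immediate from the fact that the generic point of $\spec(R/\p)$ lies in every nonempty open subset.
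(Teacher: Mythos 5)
Your proposal is correct and follows essentially the same route as the paper: both reduce to Lemma \ref{T 2.4} by showing that, over the domain $R/\p$, each $\Ext_R^j(R/\p,M)$ is free on a nonempty open set (you produce one $f$ via Lemma \ref{base}(5); the paper cites the openness of the free locus), and then intersect with the given open subset of $\gor(R/\p)$, using that free modules over Gorenstein local rings have finite injective dimension. Your step of combining the $f_j$ into a single $f$ is harmless but unnecessary, since Lemma \ref{T 2.4} only needs a (possibly different) nonempty open subset for each $j$.
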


\begin{proof}
Fix an integer $0\le j\le \height\p$, and let $N=\Ext_R^j(R/\p,M)$.
The locus $\free_{R/\p}(N)$ is an open subset of $\spec (R/\p)$ by \cite[Theorem 4.10 (ii)]{Mat}.
The zero ideal of $R/\p$ belongs to $\free_{R/\p}(N)$ since $R/\p$ is an integral domain.
By assumption, there is a nonempty open subset $U$ of $\spec (R/\p)$, which is contained in $\gor(R/\p)$.
It follows from Lemma \ref{base} (1) that $U\cap\free_{R/\p}(N)$ is a nonempty open subset of $\spec (R/\p)$.
We have $U\cap\free_{R/\p}(N)\subseteq\gor(R/\p)\cap\free_{R/\p}(N)\subseteq\fid_{R/\p} (N)$.
Lemma \ref{T 2.4} implies that $\fid_{R} (M)$ contains a nonempty open subset of $\V(\p)$.
\end{proof}

The result below can be obtained from Theorem \ref{thmA}.

\begin{cor}\label{corA}
\begin{enumerate}[\rm(1)]
\item Let $M$ be an $R$-module.
Suppose that $\gor(R/\p)$ contains a nonempty open subset of $\spec (R/\p)$ for any $\p\in\fid_R (M)\cap\supp_R(M)$.
Then $\fid_{R} (M)$ is an open subset of $\spec (R)$.
\item Suppose that $\gor(R/\p)$ contains a nonempty open subset of $\spec (R/\p)$ for all prime ideals $\p$ of $R$.
Then $\fid_{R} (M)$ is an open subset of $\spec (R)$ for any $R$-module $M$. 
\end{enumerate}
\end{cor}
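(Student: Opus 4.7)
The plan is to deduce the corollary directly from Theorem \ref{thmA} combined with the topological Nagata criterion (Lemma \ref{Mat24.2}). Recall that $\fid_R(M)$ is stable under generalization (as noted just after Lemma \ref{Mat24.2}), so only condition (2) of Lemma \ref{Mat24.2} needs to be verified: for every $\p\in\fid_R(M)$, the locus $\fid_R(M)$ must contain a nonempty open subset of $\V(\p)$.

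For part (1), I would split on whether $\p\in\supp_R(M)$. If $\p\notin\supp_R(M)$, then Remark \ref{rmk2} immediately yields a nonempty open subset of $\V(\p)$ contained in $\fid_R(M)$, namely $\V(\p)\cap(\spec(R)\setminus\supp_R(M))$, which is open in $\V(\p)$ because $\spec(R)\setminus\supp_R(M)$ is open in $\spec(R)$, and nonempty because it contains $\p$. If instead $\p\in\fid_R(M)\cap\supp_R(M)$, then the hypothesis furnishes a nonempty open subset of $\spec(R/\p)$ inside $\gor(R/\p)$, and Theorem \ref{thmA} applies directly to produce a nonempty open subset of $\V(\p)$ contained in $\fid_R(M)$. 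Either way, condition (2) of Lemma \ref{Mat24.2} holds, so $\fid_R(M)$ is open.

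Part (2) is then an immediate consequence of part (1): the hypothesis in (2) is logically stronger than the one in (1) (it applies to every $\p\in\spec(R)$, in particular to every $\p\in\fid_R(M)\cap\supp_R(M)$), so (1) applies to every $R$-module $M$ and gives the desired openness.

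There is no real obstacle here; the work has already been done in Theorem \ref{thmA} and Remark \ref{rmk2}. The only point requiring a touch of care is the case $\p\notin\supp_R(M)$, which is handled cleanly by Remark \ref{rmk2}; after that, the argument is a purely formal application of the topological Nagata criterion.
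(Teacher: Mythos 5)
Your proof is correct and follows exactly the paper's own argument: verify the topological Nagata criterion (Lemma \ref{Mat24.2}) for $\fid_R(M)$, using Remark \ref{rmk2} for primes outside $\supp_R(M)$ and Theorem \ref{thmA} for primes in $\fid_R(M)\cap\supp_R(M)$, with (2) following formally from (1). No discrepancies.
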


\begin{proof}
(1): The locus $\fid_{R} (M)$ clearly satisfies the condition (1) in Lemma \ref{Mat24.2}.
Let $\p\in\fid_R (M)$.
It follows from Remark \ref{rmk2} and Theorem \ref{thmA} that $\fid_{R} (M)$ contains a nonempty open subset of $\V(\p)$.
Therefore $\fid_{R} (M)$ satisfies the condition (2) in Lemma \ref{Mat24.2}, and we can conclude that $\fid_{R} (M)$ is an open subset of $\spec (R)$ by Lemma \ref{Mat24.2}.

(2): The assertion follows from (1).
\end{proof}

Note that the assumption
\begin{equation}\label{assump NC gor}
\gor(R/\p)\ {\rm contains\ a\ nonempty\ open\ subset\ of\ } \spec (R/\p)\ {\rm for\ all\ prime\ ideals\ } \p \ {\rm of\ } R.
\end{equation}
in Corollary \ref{corA} (2) is the same as that of (NC) for the Gorensteinness.
The above corollary yields the following result; it recovers theorems of Greco and Marinari and of Takahashi.

\begin{cor}\label{corA'}
\begin{enumerate}[\rm(1)]
\item {\rm (Greco--Marinari)} The Gorensteinness satisfies {\rm (NC)}.
\item Suppose that $R$ is an acceptable ring. 
Then $\fid_R(M)$ is open for all $R$-modules $M$.
\item If $R$ is a homomorphic image of a Gorenstein ring, then $\fid_R(M)$ is open for all $R$-modules $M$, and in particular, $\gor(R)$ is open.
\item {\rm (Takahashi)} If $R$ is excellent, then $\fid_R(M)$ is open for any $R$-module $M$.
\end{enumerate}
\end{cor}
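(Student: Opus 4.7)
The overall strategy is to derive all four parts from Corollary~\ref{corA}(2), exploiting the fact that a local ring is Gorenstein exactly when it has finite injective dimension as a module over itself; equivalently, $\fid_R(R)=\gor(R)$.

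For part (1), the hypothesis of (NC) for the Gorenstein property is literally condition~(\ref{assump NC gor}). Applying Corollary~\ref{corA}(2) to $M=R$ and invoking $\fid_R(R)=\gor(R)$ yields the openness of $\gor(R)$, which is exactly (NC) for $\gor$.

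For part (2), I verify (\ref{assump NC gor}) for any acceptable $R$. Given a prime $\p$ of $R$, the quotient $R/\p$ is a finitely generated $R$-algebra, so $\gor(R/\p)$ is open by condition~(2) in the definition of acceptable. Since $R/\p$ is a domain whose localization at $(0)$ is its field of fractions, and a field is Gorenstein, the zero ideal lies in $\gor(R/\p)$, proving nonemptiness. Corollary~\ref{corA}(2) then delivers the conclusion. Part (3) is then immediate: the example recorded earlier states that any homomorphic image of a Gorenstein ring is acceptable, so (2) applies, and the ``in particular'' clause is just $\fid_R(R)=\gor(R)$ with $M=R$.

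For part (4), it suffices to show that every excellent ring $R$ is acceptable and then to invoke (2). Universal catenarity is built into the definition of excellence, and well-fiberedness is automatic because the formal fibers of an excellent ring are geometrically regular and hence Gorenstein. The main obstacle is verifying openness of $\gor(S)$ for every finitely generated $R$-algebra $S$. Such $S$ is itself excellent, so I would reduce to checking (NC) for $\gor$ applied to $S$ via part (1): for each $\p\in\spec(S)$ the quotient $S/\p$ is an excellent domain, and by the J-2 property its regular locus is a nonempty open subset of $\spec(S/\p)$ (containing the generic point); since regular implies Gorenstein, $\gor(S/\p)$ contains this open set, verifying (\ref{assump NC gor}) for $S$. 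Part (1) then gives openness of $\gor(S)$, completing the verification that $R$ is acceptable and hence that (2) applies.
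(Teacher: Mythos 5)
Your proposal is correct and follows essentially the same route as the paper: everything is reduced to Corollary~\ref{corA}(2) via the identity $\gor(R)=\fid_R(R)$ and the verification of (\ref{assump NC gor}) for acceptable rings, with (3) and (4) obtained by noting that homomorphic images of Gorenstein rings and excellent rings are acceptable. The only difference is that you spell out the standard facts the paper merely asserts or cites (nonemptiness of $\gor(R/\p)$ at the generic point, and the J-2/formal-fiber argument showing excellent rings are acceptable), and these details are accurate.
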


\begin{proof}
The assertion (1) follows from Corollary \ref{corA} (2) because $\gor(R)=\fid_R(R)$. 
If $R$ is an acceptable ring, then (\ref{assump NC gor}) holds for $R$ by definition. 
Hence, Corollary \ref{corA} (2) gives the assertion (2).
Both a homomorphic image of a Gorenstein ring and an excellent ring are acceptable.
Thus the assertions (3) and (4) follow from (2).
\end{proof}

In the next section, the ring $R$ that satisfies (\ref{assump NC gor}) will further be studied.
We close this section by stating a proposition about (NC) for the Gorensteinness.

Put $(-)^\ast =\Hom_R (-,R)$.
Let $M$ be an $R$-module.
We say that $M$ is \textit{totally reflexive} if the natural homomorphism $M \to M^{\ast\ast}$ is isomorphic and $\Ext_R^i(M\oplus M^\ast,R)=0$ for all $i>0$.
Let $n\ge 0$ be an integer.
If $n$th syzygy of $M$ is totally reflexive, then we say that $M$ has \textit{G-dimension at most n}, and write $\operatorname{G-dim} (M)\le n$.
We say that $M$ has \textit{infinite G-dimension}, and write $\operatorname{G-dim} (M)=\infty$ if such an integer $n$ does not exist.

\begin{prop}\label{nc gor}
The following are equivalent.
\begin{enumerate}[\rm(1)]
\item $\gor(R)$ is open.
\item $\gor(R/\p)$ contains a nonempty open subset of $\spec (R/\p)$ for any $\p\in\gor(R)$.
\item $\fid_R(R/I)$ is open in $\spec (R)$ for any ideal $I$ of $R$.
\item $\fid_R(M)$ is open in $\spec (R)$ for any $R$-module $M$ such that $\operatorname{G-dim} (M)<\infty$.
\end{enumerate}
\end{prop}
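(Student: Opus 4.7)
The plan is to go around a cycle through (1), combining a few trivial implications with two applications of Corollary \ref{corA}(1) and one use of Corollary \ref{corA'}(3). Three of the implications are essentially immediate: $(3)\Rightarrow(1)$ by taking $I=0$, so that $R/I=R$ and $\fid_R(R)=\gor(R)$; $(4)\Rightarrow(1)$ by taking $M=R$, whose G-dimension is zero; and $(2)\Rightarrow(1)$ by Corollary \ref{corA}(1) applied to $M=R$, noting that $\fid_R(R)\cap\supp_R(R)=\gor(R)$ makes (2) precisely the hypothesis of the corollary.

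For $(1)\Rightarrow(2)$, fix $\p\in\gor(R)$. Since $\gor(R)$ is open, there is $f\in R\setminus\p$ with $\D(f)\subseteq\gor(R)$; every prime of $R_f$ then has Gorenstein localization, so $R_f$ is itself a Gorenstein ring. Thus $(R/\p)_f = R_f/\p R_f$ is a homomorphic image of the Gorenstein ring $R_f$, and Corollary \ref{corA'}(3) shows that $\gor((R/\p)_f)$ is open in $\spec((R/\p)_f)$. This locus is nonempty, because $(R/\p)_f$ is a domain whose localization at the generic point is a field, hence Gorenstein. Applying Lemma \ref{replace} to the ring $R/\p$ and the nonzero image of $f$ then pulls this back to a nonempty open subset of $\spec(R/\p)$ contained in $\gor(R/\p)$, giving (2).

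The implications $(1)\Rightarrow(3)$ and $(1)\Rightarrow(4)$ share a common template: apply Corollary \ref{corA}(1) to the relevant module $M$, and verify the containment $\fid_R(M)\cap\supp_R(M)\subseteq\gor(R)$, so that (2), now equivalent to (1), supplies the missing hypothesis on $\gor(R/\p)$ at each $\p$ in that intersection. For (4), if $\p\in\fid_R(M)\cap\supp_R(M)$ then $M_\p$ is a nonzero $R_\p$-module with finite G-dimension (G-dimension localizes) and finite injective dimension, and a theorem of Foxby forces $R_\p$ to be Gorenstein. For (3), the corresponding input is the classical fact that a local ring admitting a nonzero cyclic module of finite injective dimension must be Gorenstein, applied to $R_\p/IR_\p$. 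The main obstacle is to have these two Gorensteinness criteria cleanly available (the first a standard theorem of Foxby, the second going back to Foxby and Sharp); once they are, everything assembles from Corollary \ref{corA}(1), Corollary \ref{corA'}(3), and Lemma \ref{replace}.
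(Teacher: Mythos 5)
Your proposal is correct and follows essentially the same route as the paper: the trivial implications $(3)\Rightarrow(1)$ and $(4)\Rightarrow(1)$ via $\gor(R)=\fid_R(R)$, the implication $(1)\Rightarrow(2)$ via Corollary \ref{corA'}(3) applied to the Gorenstein ring $R_f$, and the implications into (3) and (4) via Corollary \ref{corA}(1) together with the Peskine--Szpiro criterion for cyclic modules and the Foxby--Holm criterion for modules of finite G-dimension. The only differences are cosmetic (your extra direct $(2)\Rightarrow(1)$ and minor attribution details).
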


\begin{proof}
The implications (3) $\Rightarrow$ (1) and (4) $\Rightarrow$ (1) hold since $\gor(R)=\fid_R(R)$.

(1) $\Rightarrow$ (2):\ 
For any $\p\in\gor(R)$, there exists $f\in R\setminus\p$ such that $\D(f)$ is contained in $\gor(R)$ because $\gor(R)$ is open.
The ring $R_f$ is Gorenstein.
Corollary \ref{corA'} (3) implies that $\gor((R/\p)_f)$ is an open subset of $\spec ((R/\p)_f)$.
This says that $\gor(R/\p)$ contains a nonempty open subset of $\spec (R/\p)$.

(2) $\Rightarrow$ (3):\ 
Let $I$ be an ideal of $R$ and let $\p\in\fid_R (R/I)\cap\supp_R(R/I)$.
Since $(R/I)_\p$ is a nonzero cyclic $R_\p$-module such that $\id_{R_\p}(R/I)_\p<\infty$,
it follows from \cite[Chapitre II, Th\'{e}or\`{e}me 5.5]{PS} that $R_\p$ is Gorenstein.
Thus, $\gor(R/\p)$ contains a nonempty open subset of $\spec (R/\p)$.
By Corollary \ref{corA} (1), $\fid_R(R/I)$ is open.

(2) $\Rightarrow$ (4):\ 
Let $M$ be a $R$-module $M$ such that $\operatorname{G-dim} (M)<\infty$, and let $\p\in\fid_R (M)\cap\supp_R(M)$.
Then $M_\p$ is a nonzero $R_\p$-module such that $\operatorname{G-dim}_{R_\p} (M_\p)<\infty$ and $\id_{R_\p}M_\p<\infty$.
It follows from \cite[Proposition 5.2.9]{C} and \cite[Corollary 3.3]{H} that $R_\p$ is Gorenstein.
A similar argument to the latter part of the proof of (2) $\Rightarrow$ (3) shows that $\fid_R(M)$ is open.
\end{proof}

\section{Some rings over which all $\fid$-loci are open}

In this section, we study the ring $R$ that satisfies (\ref{assump NC gor}).
The $\fid$-loci of all $R$-modules are open if $R$ is such a ring; see Corollary \ref{corA}.
The property (\ref{assump NC gor}) is stable under the following operations.

\begin{lem}\label{stable NC gor}
Suppose that {\rm (\ref{assump NC gor})} holds for $R$.
Then it also holds for
\begin{enumerate}[\rm(1)]
\item a homomorphic image of $R$,
\item a localization of $R$, and
\item the image of a Gorenstein homomorphism from $R$ (e.g., a polynomial ring over $R$).
\end{enumerate}
\end{lem}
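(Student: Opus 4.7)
The plan is to verify (\ref{assump NC gor}) for each of the three derived rings by lifting a nonempty open Gorenstein subset from a suitable quotient $R/\p$ of $R$, using repeatedly that any nonempty open subset of the spectrum of a domain contains the generic point (Lemma \ref{base}(1)). Part (1) is immediate: every prime of $R/I$ has the form $\p/I$ with $\p\supseteq I$, and $(R/I)/(\p/I)\cong R/\p$, so (\ref{assump NC gor}) for $R$ at $\p$ is exactly the required statement for $R/I$ at $\p/I$.

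For (2), let $T=W^{-1}R$ for a multiplicative subset $W\subseteq R$; every prime of $T$ has the form $\p T$ with $\p\cap W=\emptyset$, and $T/\p T\cong W^{-1}(R/\p)$. Given a nonempty open $V\subseteq\gor(R/\p)$ from (\ref{assump NC gor}) for $R$, the preimage of $V$ under the continuous spectral map $\iota\colon\spec(W^{-1}(R/\p))\to\spec(R/\p)$ is an open subset of $\spec(T/\p T)$; it is nonempty because the generic point of $W^{-1}(R/\p)$ maps to the generic point of $R/\p$, which lies in $V$, and it is contained in $\gor(T/\p T)=\iota^{-1}(\gor(R/\p))$.

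For (3), let $\phi\colon R\to S$ be a Gorenstein homomorphism, pick $\q\in\spec(S)$, and set $\p=\phi^{-1}(\q)$. Applying (\ref{assump NC gor}) for $R$ at $\p$ and shrinking to a basic open, one obtains $g\in R\setminus\p$ such that $(R/\p)_g$ is a Gorenstein ring. Base changing $\phi$ along $R\to (R/\p)_g$ yields a Gorenstein homomorphism $(R/\p)_g\to S_g/\p S_g$, and since its source is Gorenstein, so is its target $S_g/\p S_g$. Then $S_g/\q S_g\cong(S/\q)_{\phi(g)}$, being a homomorphic image of the Gorenstein ring $S_g/\p S_g$, has an open Gorenstein locus by Corollary \ref{corA'}(3); this locus is nonempty since $S_g/\q S_g$ is a domain. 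Lemma \ref{replace} then yields a nonempty open subset of $\spec(S/\q)$ contained in $\gor(S/\q)$.

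The main obstacle lies in (3): $S/\q$ is not itself a quotient of $R/\p$, only a quotient of the base change $S\otimes_R R/\p$, so (\ref{assump NC gor}) for $R$ does not apply to it directly. The resolution is to localize at a suitable $g\in R$ so that $S_g/\p S_g$ becomes a Gorenstein ring (via base change of $\phi$), and then to invoke openness of the Gorenstein locus on homomorphic images of Gorenstein rings (Corollary \ref{corA'}(3)) to handle the further quotient $S_g/\q S_g$.
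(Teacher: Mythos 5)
Your proof is correct and follows essentially the same route as the paper's: part (1) by the isomorphism $(R/I)/(\p/I)\cong R/\p$, part (2) by identifying $\spec(T/\p T)$ with the subspace of $\spec(R/\p)$ of primes disjoint from $W$ and using that the generic point lies in every nonempty open set, and part (3) by base changing the Gorenstein homomorphism along $R\to(R/\p)_g$ with $(R/\p)_g$ Gorenstein and then invoking Corollary \ref{corA'}(3) for the quotient $(S/\q)_{\phi(g)}$. The only difference is cosmetic (you cite Lemma \ref{replace} where the paper implicitly identifies a basic open subspace, and the paper cites \cite[Proposition 6.4]{Sh} for stability of Gorenstein homomorphisms under base change, which you assert).
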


\begin{proof}
(1): Let $I$ be an ideal of $R$, and $\q$ a prime ideal of $S=R/I$.
Then $\q=\p/I$ for some prime ideal $\p$ of $R$ containing $I$, and $S/\q\simeq R/\p$.
Therefore, $\gor(S/\q)$ contains a nonempty open subset of $\spec (S/\q)$.

(2): Let $S$ be a multiplicatively closed subset of $R$, and let $\p$ be a prime ideal of $R$ with $\p\cap S=\emptyset$.
Set $W=\{\q/\p\in\spec (R/\p)\mid \q\cap S=\emptyset\}$.
There is a natural homeomorphism $F:W\to\spec (R_S/\p R_S)$.
By assumption, $\gor(R/\p)$ contains a nonempty open subset of $\spec (R/\p)$.
It follows from Lemma \ref{base} (1) that $\gor(R/\p)\cap W$ contains a nonempty open subset of $W$.
Hence, $\gor(R_S/\p R_S)$ contains a nonempty open subset of $\spec (R_S/\p R_S)$
because $F(\gor(R/\p)\cap W)=\gor(R_S/\p R_S)$.

(3): Let $\phi:R\to S$ be a Gorenstein homomorphism. 
Let $\q$ be a prime ideal of $S$, and $\p=\q\cap R$.
Since $\gor(R/\p)$ contains a nonempty open subset of $\spec (R/\p)$, there exists $f\in R\setminus\p$ such that $(R/\p)_f$ is Gorenstein.
The induced ring homomorphism $\phi \otimes_R (R/\p)_f:(R/\p)_f \to S \otimes_R (R/\p)_f$ is also Gorenstein by \cite[Proposition 6.4]{Sh}.
It follows from \cite[Corollary 3.3.15]{BH} that $S \otimes_R (R/\p)_f$ is a Gorenstein ring.
There is a natural surjection $S \otimes_R (R/\p)_f\simeq (S/\p S)_{\phi(f)}\twoheadrightarrow (S/\q)_{\phi(f)}$.
By Corollary \ref{corA'} (3), we obtain $\gor((S/\q)_{\phi(f)})$ is open.
This says that $\gor(S/\q)$ contains a nonempty open subset of $\spec (S/\q)$.
\end{proof}

Let $S$ be a ring.
A ring homomorphism $\phi:R\to S$ is said to be \textit{essentially of finite type} if $S$ is the localization of a finitely generated $R$-algebra.
Below is a direct corollary of Lemma \ref{stable NC gor}.

\begin{cor}\label{stable NC gor essentially}
Let $S$ be a ring, and let $\phi:R\to S$ be a ring homomorphism.
Suppose that {\rm (\ref{assump NC gor})} holds for $R$, and $\phi$ is either essentially of finite type or Gorenstein.
Then {\rm (\ref{assump NC gor})} also holds for $S$.
In particular, $\fid_{S} (M)$ is open for any $S$-module $M$. 
\end{cor}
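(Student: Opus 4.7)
The plan is to deduce this corollary directly from Lemma \ref{stable NC gor}, handling the two cases on $\phi$ separately, and then invoke Corollary \ref{corA}(2) for the last assertion. So I would first establish (\ref{assump NC gor}) for $S$, and then the openness of $\fid_S(M)$ is automatic.

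The case where $\phi$ is a Gorenstein homomorphism is essentially content-free: (\ref{assump NC gor}) for $S$ is exactly the conclusion of Lemma \ref{stable NC gor}(3) applied to $\phi$ itself, so nothing further is needed.

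For the case where $\phi$ is essentially of finite type, write $S = T_W$ for some finitely generated $R$-algebra $T$ and some multiplicatively closed subset $W \subseteq T$. I would factor $\phi$ as the composition
\[
R \;\hookrightarrow\; R[X_1,\ldots,X_n] \;\twoheadrightarrow\; T \;\to\; T_W = S,
\]
where the first map is the inclusion into a polynomial ring, the second is a surjection presenting $T$ as a quotient of that polynomial ring, and the third is the localization map. The first map is Gorenstein (it is flat with regular fibers $\kappa(\p)[X_1,\ldots,X_n]$), so Lemma \ref{stable NC gor}(3) gives (\ref{assump NC gor}) for $R[X_1,\ldots,X_n]$. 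Lemma \ref{stable NC gor}(1) then propagates the property along the surjection to $T$, and Lemma \ref{stable NC gor}(2) propagates it along the localization to $S$. Once (\ref{assump NC gor}) is known for $S$, Corollary \ref{corA}(2) immediately yields openness of $\fid_S(M)$ for every $S$-module $M$.

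I do not expect any genuine obstacle: the corollary is a bookkeeping exercise combining the three stability statements of Lemma \ref{stable NC gor}. The only mildly nontrivial point is recognizing the standard structural decomposition of an essentially of finite type homomorphism as (polynomial extension) followed by (surjection) followed by (localization), which allows the three parts of Lemma \ref{stable NC gor} to be applied in sequence.
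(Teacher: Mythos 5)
Your argument is correct and is exactly the route the paper intends: the paper presents this as a direct corollary of Lemma \ref{stable NC gor}, with the essentially-of-finite-type case handled by the standard factorization through a polynomial ring, a surjection, and a localization, followed by Corollary \ref{corA}(2) for the openness statement. No issues.
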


Suppose that $R$ is an Artinian ring.
Then {\rm (\ref{assump NC gor})} holds for $R$ since $R/\p$ is a field for any prime ideal $\p$ of $R$.
More generally, we can prove the result below.

\begin{prop}\label{gor fiber}
Let $S$ be a ring, and let $\phi:R\to S$ be either essentially of finite type or Gorenstein.
If $R$ is a well-fibered semi-local ring, then {\rm (\ref{assump NC gor})} holds for $S$, and thus $\fid_{S} (M)$ is open for any $S$-module $M$. 
In particular, $\fid_{S} (M)$ is open for any $S$-module $M$ if $R$ is an Artinian ring.
\end{prop}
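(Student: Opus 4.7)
The plan is to establish (\ref{assump NC gor}) for $R$ itself; once this is known, Corollary \ref{stable NC gor essentially} extends it to $S$ and Corollary \ref{corA} (2) yields the openness of $\fid_S(M)$. The Artinian case follows at once, either as a special instance of the main claim (Artinian rings are well-fibered and trivially semi-local) or directly because $R/\p$ is a field for every prime $\p$ of $R$ when $R$ is Artinian.

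Fix a prime $\p$ of $R$. I first reduce to the case that $R$ is a semi-local Noetherian domain. The quotient $R/\p$ is semi-local, and it remains well-fibered: for any prime $\mathfrak{r}/\p$ of $R/\p$, the natural completion map $(R/\p)_{\mathfrak{r}/\p}\to\widehat{(R/\p)_{\mathfrak{r}/\p}}$ is identified with the base change $R_\mathfrak{r}/\p R_\mathfrak{r}\to\widehat{R_\mathfrak{r}}/\p\widehat{R_\mathfrak{r}}$ of the Gorenstein homomorphism $R_\mathfrak{r}\to\widehat{R_\mathfrak{r}}$, and Gorenstein homomorphisms are preserved under base change by \cite[Proposition 6.4]{Sh}. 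Replacing $R$ by $R/\p$, the task becomes to show that $\gor(R)$ contains a nonempty open subset of $\spec R$.

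Let $\m_1,\ldots,\m_r$ be the maximal ideals of $R$. The key technical step is that for every $i$, the locus $\gor(R_{\m_i})$ is open in $\spec R_{\m_i}$. Indeed, $R_{\m_i}$ has Gorenstein formal fibers by hypothesis, and $\widehat{R_{\m_i}}$ is a complete Noetherian local ring, hence a homomorphic image of a regular local ring by Cohen's structure theorem. In particular $\widehat{R_{\m_i}}$ admits a canonical module, which descends along the Gorenstein map $R_{\m_i}\to\widehat{R_{\m_i}}$ to a canonical module $\omega$ of $R_{\m_i}$; then $\gor(R_{\m_i})$ coincides with the free locus $\free_{R_{\m_i}}(\omega)$, which is open by \cite[Theorem 4.10]{Mat}.

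To finish, observe that $(0)\in\gor(R_{\m_i})$ for every $i$ since $R_{(0)}$ is the fraction field of $R$. Hence for each $i$ one can choose a nonzero $g_i\in R$ with $\D(g_i)\cap\spec R_{\m_i}\subseteq\gor(R_{\m_i})$ under the natural inclusion $\spec R_{\m_i}\subseteq\spec R$, and set $g=g_1\cdots g_r\in R\setminus\{0\}$. For any $\p\in\D(g)$, semi-locality gives $\p\subseteq\m_j$ for some $j$, and since $g_j\mid g$ we have $g_j\notin\p$, whence $\p\in\gor(R_{\m_j})\subseteq\gor(R)$. Thus $\D(g)\subseteq\gor(R)$, establishing (\ref{assump NC gor}). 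The principal obstacle is the descent of the canonical module along $R_{\m_i}\to\widehat{R_{\m_i}}$, which is a classical ingredient invoked from the literature rather than developed within the excerpt.
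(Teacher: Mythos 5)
Your overall architecture matches the paper's: reduce to establishing (\ref{assump NC gor}) for $R$ itself via Corollary \ref{stable NC gor essentially}, work separately at each of the finitely many maximal ideals, and patch the resulting open sets together (your product $g=g_1\cdots g_r$ is the same device as the paper's intersection $V=\bigcap_{i}V_i$). The passage to $R/\p$ (preserving well-fiberedness by base change of Gorenstein homomorphisms) and the final patching are fine. The gap is in your ``key technical step,'' the openness of $\gor\bigl((R/\p)_{\m_i}\bigr)$. The local ring $A=(R/\p)_{\m_i}$ is a domain with Gorenstein formal fibers, but it need not be Cohen--Macaulay (take $R$ a non-Cohen--Macaulay complete local domain and $\p=0$). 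Consequently neither $A$ nor $\widehat{A}$ admits a canonical module in the sense of this paper: a Gorenstein module of type $1$ is maximal Cohen--Macaulay of finite injective dimension, which forces the ring to be Cohen--Macaulay. Even if one replaces $\omega$ by the generalized canonical module $\Ext^{c}_S(\widehat{A},S)$ coming from a regular presentation $S\twoheadrightarrow\widehat{A}$, the identification $\gor(A)=\free_A(\omega)$ fails off the Cohen--Macaulay locus, so the appeal to \cite[Theorem 4.10]{Mat} does not give what you need. Finally, the asserted descent of the canonical module along $A\to\widehat{A}$ is itself unproved: one needs an $A$-module $C$ with $\widehat{C}\cong\omega_{\widehat{A}}$, and flatness plus Gorenstein fibers do not obviously supply this (a Cohen--Macaulay local ring whose completion has a canonical module need not have one itself).

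The paper sidesteps all of this by invoking \cite[Corollary 6.5]{Sh}: since $R_{\m_i}$ is well-fibered, $\gor(R_{\m_i}/\p R_{\m_i})$ is already open. If you want to keep your strategy but make it correct, the right move is to descend the \emph{open locus} rather than a module: $\widehat{A}$ is a homomorphic image of a regular (hence Gorenstein) local ring, so $\gor(\widehat{A})$ is open by Corollary \ref{corA'} (3); the completion map $A\to\widehat{A}$ is faithfully flat, hence submersive on spectra, and because its fibers are Gorenstein, $A_\q$ is Gorenstein if and only if $\widehat{A}_{\q'}$ is Gorenstein for the primes $\q'$ lying over $\q$; therefore $\gor(A)$ is open. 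That descent argument is precisely the content of Sharp's Corollary 6.5, so either cite it as the paper does or reproduce it; as written, the canonical-module step does not go through.
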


\begin{proof}
It suffices to verify that {\rm (\ref{assump NC gor})} holds for $R$.
Let $\p$ be a prime ideal of $R$, and let $\m_1,\ldots,\m_r$ be all the maximal elements of $\V(\p)$.
Fix an integer $1\le i\le r$.
By assumption, the local ring $R_{\m_i}$ is well-fibered.
It follows from \cite[Corollary 6.5]{Sh} that $\gor(R_{\m_i}/\p R_{\m_i})$ is an open subset of $\spec (R_{\m_i}/\p R_{\m_i})$.
Put $U_i = \{\q/\p \in\spec (R/\p)\mid \q\subseteq\m_i \}$.
There is a natural homeomorphism $F:U_i \to \spec (R_{\m_i}/\p R_{\m_i})$.
Then $\gor(R/\p)\cap U_i$ is an open subset of $U_i$ since $F^{-1}(\gor(R_{\m_i}/\p R_{\m_i}))=\gor(R/\p)\cap U_i$. 
Hence there exists an open subset $V_i$ of $\spec (R/\p)$ such that $\gor(R/\p)\cap U_i=V_i\cap U_i$.
For any integer $1\le i\le r$, the zero ideal of $R/\p$ belongs to $V_i$.
Therefore, $V=\bigcap_{i=1}^r V_i$ is a nonempty open subset of $\spec (R/\p)$.
For any prime ideal $Q$ of $R/\p$ belonging to $V$, there exists an integer $1\le i\le r$ such that $Q$ is in $U_i$.
We have $Q\in V\cap U_i \subseteq V_i\cap U_i=\gor(R/\p)\cap U_i \subseteq\gor(R/\p)$.
\end{proof}

Now we prove the main result of this section.
It states that the converse of Theorem \ref{thmA} holds if a fixed prime ideal belongs to the $\mcm$-locus of a module.
The same techniques as in the proof of \cite[Theorem 1.4]{GM} play an essential role in the proof of the theorem.

\begin{thm}\label{gor}
Let $M$ be an $R$-module, and $\p\in\supp_R(M)\cap\fid_R(M)\cap\mcm_R(M)$.
The following conditions are equivalent.
\begin{enumerate}[\rm(1)]
\item $\fid_R(M)$ contains a nonempty open subset of $\V(\p)$.
\item $\gor(R/\p)$ contains a nonempty open subset of $\spec (R/\p)$.
\end{enumerate}
\end{thm}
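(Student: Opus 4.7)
The implication $(2)\Rightarrow(1)$ is immediate from Theorem~\ref{thmA}, so the real work is in $(1)\Rightarrow(2)$. The plan is to mimic, in the module setting, the proof of \cite[Theorem~1.4]{GM}: after shrinking, reduce via killing an $M$-regular sequence to a setting in which $\p$ descends to the unique (nilpotent) minimal prime of an auxiliary ring, exhibit a certain top Ext module as a free $R/\p$-module of rank equal to the type $r$ of $M_\p$, and finally deduce that $R/\p$ is Gorenstein in an open neighbourhood of $0$.

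By hypothesis $(1)$ there is $f\in R\setminus\p$ with $\V(\p)\cap\D(f)\subseteq\fid_R(M)$, so Lemma~\ref{replace} allows us to pass to $R_f, M_f, \p R_f$ and assume $\V(\p)\subseteq\fid_R(M)$. By Remark~\ref{rmk1}, $M_\p$ is a Gorenstein $R_\p$-module, so $R_\p$ is Cohen--Macaulay of dimension $n:=\height\p$, and $\p R_\p$ contains a system of parameters $\xx=x_1,\dots,x_n$ that is both $R_\p$- and $M_\p$-regular. Two applications of Lemma~\ref{base}(3) then arrange that $\xx$ be $R$- and $M$-regular globally. Setting $I=(\xx)$, the equality $\sqrt{IR_\p}=\p R_\p$ combined with Lemma~\ref{base}(4) allows one more shrinking so that $\sqrt I=\p$; with $\bar R:=R/I$, $\bar M:=M/IM$, and $\bar\p:=\p/I$, we then have $\bar\p^{s}=0$ in $\bar R$ for some $s\ge 1$.

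Set $\omega:=\Ext^n_R(R/\p,M)$. Since $\xx$ is $M$-regular and annihilates $R/\p$, the standard change-of-rings isomorphism yields $\omega\cong\Hom_{\bar R}(R/\p,\bar M)$, which is naturally an $R/\p$-module. Localizing at $\p$, Bass's formula gives $\id_{R_\p}M_\p=n$, whence $\id_{\bar R_{\bar\p}}\bar M_{\bar\p}=0$ and $\bar M_{\bar\p}$ is a nonzero injective module of finite length over the Artinian local ring $\bar R_{\bar\p}$. Hence $\bar M_{\bar\p}$ is the $r$-fold direct sum of an injective hull of $k(\p):=(R/\p)_\p$, where $r$ is the type of $M_\p$ as a Gorenstein module, and $\omega_\p\cong k(\p)^{r}$. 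Thus $\omega$ is a finitely generated $R/\p$-module of generic rank $r$, and since $R/\p$ is a domain, Lemma~\ref{base}(5) permits a further shrinking after which $\omega\cong(R/\p)^{r}$.

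The remaining task, patterned on \cite[Theorem~1.4]{GM}, is to convert this freeness into genuine Gorensteinness of $R/\p$ on an open set. Applying Lemma~\ref{base}(6) once more, we arrange that each successive quotient $\bar\p^{i-1}/\bar\p^{i}$, and (by the analogous argument for $\bar M$) each $\bar\p^{i-1}\bar M/\bar\p^{i}\bar M$, is a free $R/\p$-module. These data exhibit $\omega$ as an $r$-fold copy of a canonical-module-like object for $R/\p$, and the isomorphism $\omega\cong(R/\p)^{r}$ then forces that object to be a rank-one projective $R/\p$-module after one last shrinking; hence $(R/\p)_g$ is Gorenstein for some $g\in R\setminus\p$. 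The principal difficulty is this last paragraph: making rigorous the passage from freeness of the top Ext module $\omega$ to the Gorenstein property of $R/\p$. This is precisely where the Greco--Marinari technique must be invoked, now with the Gorenstein module $M_\p$ of type $r$ taking the role that the Gorenstein ring $R_\p$ of type $1$ played in the original argument.
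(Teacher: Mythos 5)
Your reduction steps (localizing so that $\V(\p)\subseteq\fid_R(M)$, cutting down by an $R$- and $M$-regular sequence $\xx$ of length $n=\height\p$, arranging $\sqrt{\xx R}=\p$, and identifying $\Ext^n_R(R/\p,M)$ with a free $R/\p$-module of rank the type $r$ via Lemma \ref{base}(5)) all match the paper's argument. But the final paragraph is a genuine gap, and you have in effect flagged it yourself: freeness of the single module $\omega=\Ext^n_R(R/\p,M)\cong\Hom_{\bar R}(R/\p,\bar M)$ does not by itself yield Gorensteinness of $(R/\p)_\q$ at primes $\q\supsetneq\p$, and the appeal to ``a canonical-module-like object'' is not an argument --- $R/\p$ need not be Cohen--Macaulay nor possess a canonical module a priori, and the implication you need runs in the opposite direction from the one that freeness of $\omega$ alone supports.

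What the paper actually does at this point is the following, and it is the heart of the proof. Working in the Artinian-fibre situation ($\bar\p^s=0$ with the successive quotients $\bar\p^{i}/\bar\p^{i+1}$ free over $R/\p$), one first upgrades the single isomorphism $\omega\cong(R/\p)^{\oplus r}$ to the full package $\Ext^j_{\bar R}(R/\p,\bar M)=0$ for all $j>0$: the case $j=1$ comes from injectivity of $\bar M_{\bar\p}$ together with Lemma \ref{base}(2), and the general case follows by induction using $\Ext^{j+1}(R/\p,\bar M)\cong\Ext^{j}(\bar\p,\bar M)$ and the free filtration of $\bar\p$. With this vanishing, if $I$ is an injective resolution of $\bar M$ then $\Hom_{\bar R}(R/\p,I)$ is an injective resolution of $(R/\p)^{\oplus r}$ over $R/\p$ (\cite[Lemma 3.1.6]{BH}), and adjunction gives $\Ext^j_{\bar R}(R/\q,\bar M)\cong\Ext^j_{R/\p}(R/\q,R/\p)^{\oplus r}$ for every $\q\in\V(\p)$ and every $j$. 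By the Bass-number characterization of finite injective dimension (\cite[Proposition 3.1.14]{BH}) this identifies $\fid_{\bar R}(\bar M)$ with $\gor(R/\p)$ under the homeomorphism $\V(\bar\p)\to\spec(R/\p)$, and Lemma \ref{replace2} finishes both implications at once. So the missing idea in your write-up is precisely the vanishing of all higher $\Ext^j(R/\p,\bar M)$ and the resulting comparison of entire injective resolutions; without it the last shrinking step cannot be carried out.
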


\begin{proof}
Note that we can freely replace our ring $R$ with its localization $R_f$ for any element $f\in R\setminus\p$ to prove the theorem; we apply Lemma \ref{replace}.

First, we consider the case $\height\p=0$.
Then $M_\p$ is a nonzero injective $R_\p$-module because $\id_{R_\p}M_\p=\depth R_\p=0$ by \cite[Theorem 3.1.17]{BH}.
Hence, $\Ext_{R_\p}^1(\kappa(\p),M_\p)=0$ and $\Hom_{R_\p}(\kappa(\p),M_\p)\simeq \kappa(\p)^{\oplus n}$ for some integer $n>0$; here we set $\kappa(\p)=R_\p/\p R_\p$.
By (2) and (5) of Lemma \ref{base}, we may assume that the following conditions (i) and (ii) are satisfied.
On the other hand, since $\p$ is a minimal prime ideal of $R$, it follows from (4) and (6) of Lemma \ref{base} that we may assume that the following condition (iii) is satisfied.
\begin{enumerate}[\rm(i)]
\item $\Ext_R^1(R/\p,M)=0$.
\item $\Hom_R(R/\p,M)\simeq (R/\p)^{\oplus n}$ for some integer $n>0$.
\item There exists an integer $r>0$ such that $\p^r =0$ and $\p^{i}/\p^{i+1}$ is a free $R/\p$-module for each $1\le i\le r$.
\end{enumerate}
If $\Ext_R^j(R/\p,M)=0$ for an integer $j>0$, then it is seen that $\Ext_R^{j+1}(R/\p,M)\simeq\Ext_R^j(\p,M)=0$ by (iii).
Therefore, we have $\Ext_R^j(R/\p,M)=0$ for any $j>0$ by (i).
Let $I:0\to I^0\to I^1\to\cdots$ be an injective resolution of $M$.
It follows from \cite[Lemma 3.1.6]{BH} and (ii) that the complex $\Hom_R(R/\p,I)$ is an injective resolution of $\Hom_R(R/\p,M)\simeq (R/\p)^{\oplus n}$ as an $R/\p$-module, because $\Ext_R^j(R/\p,M)=0$ for any $j>0$.
Let $\q\in \V(\p)$. 
There is an isomorphism $\Hom_R(R/\q,I)\simeq\Hom_{R/\p}(R/\q,\Hom_R(R/\p,I))$ of complexes.
This says that $\Ext_R^j(R/\q,M)\simeq\Ext_{R/\p}^j(R/\q,R/\p)^{\oplus n}$ for any $j\ge0$.
We obtain an equivalence
\begin{equation}\label{fid ext}
\Ext_{R_\q}^j(\kappa(\q),M_\q)=0 \ \Leftrightarrow \ \Ext_{R_\q/\p R_\q}^j(\kappa(\q),R_\q/\p R_\q)=0
\end{equation}
for each integer $j\ge0$.
It follows from (\ref{fid ext}) and \cite[Proposition 3.1.14]{BH} that $\q$ is in $\fid_R(M)$ if and only if $\q/\p$ is in $\gor(R/\p)$.
Applying Lemma \ref{replace2} to $I=\p$ and $S=\fid_R(M)$, we see that the assertion holds.

Next, we address the general case.
Put $n=\height\p$.
It follows from \cite[Corollary 9.6.2, Remark 9.6.4(a)]{BH} that $R_\p$ is a Cohen--Macaulay local ring since $\id_{R_\p}M_\p<\infty$ and $M_\p\ne 0$.
On the other hand, $M_\p$ is a nonzero maximal Cohen--Macaulay $R_\p$-module.
By Lemma \ref{base} (3), we may assume that there exists a sequence $\bm{x}=x_1,\ldots,x_n$ of elements in $\p$, which is a regular sequence on $R$ and $M$.
Set $\overline{R}=R/\bm{x} R$, $\overline{\p}=\p/\bm{x} R$, and $\overline{M}=M/\bm{x} M$.
Then $\height{\overline{\p}}=0$ and $\overline{\p}$ belongs to $\supp_{\overline{R}} (\overline{M})$.
It follows from \cite[Corollary 3.1.15]{BH} that $\q$ is in $\fid_R(M)$ if and only if $\q/\bm{x} R$ is in $\fid_{\overline{R}}(\overline{M})$ for any $\q\in \V(\bm{x} R)$.
In particular, $\overline{\p}$ belongs to $\fid_{\overline{R}}(\overline{M})$.
Lemma \ref{replace2} implies that $\fid_R(M)$ contains a nonempty open subset of $\V(\p)$ if and only if $\fid_{\overline{R}}(\overline{M})$ contains a nonempty open subset of $\V(\overline{\p})$.
Applying the above argument in the case where $\height\p =0$,
the latter holds if and only if $\gor(\overline{R}/\overline{\p})=\gor(R/\p)$ contains a nonempty open subset of $\spec(\overline{R}/\overline{\p})=\spec(R/\p)$.
\end{proof}

The result below can be obtained from Theorem \ref{gor}.

\begin{cor}\label{Gmod}
Let $S$ be a ring, and let $\phi:R\to S$ be either essentially of finite type or Gorenstein.
Suppose that there exists an $R$-module $M$ such that $\supp_R(M)=\fid_R(M)=\mcm_R(M)=\spec (R)$.
Then {\rm (\ref{assump NC gor})} holds for $S$.
In particular, $\fid_S(N)$ is open for any $S$-module $N$.
\end{cor}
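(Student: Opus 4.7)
The plan is straightforward: first establish (\ref{assump NC gor}) for $R$ itself by invoking Theorem \ref{gor} at every prime, and then transfer the property to $S$ via Corollary \ref{stable NC gor essentially}.

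To carry out the first step, I would fix an arbitrary $\p \in \spec(R)$ and apply Theorem \ref{gor} to the hypothesized module $M$ at $\p$. The assumption $\supp_R(M) = \fid_R(M) = \mcm_R(M) = \spec(R)$ places $\p$ in $\supp_R(M) \cap \fid_R(M) \cap \mcm_R(M)$, so the theorem is available. Condition (1) of Theorem \ref{gor} then holds for the trivial reason that $\V(\p)$ itself is a nonempty open subset of $\V(\p)$ contained in $\fid_R(M) = \spec(R)$. Consequently, the equivalence in Theorem \ref{gor} forces condition (2) at $\p$, namely that $\gor(R/\p)$ contains a nonempty open subset of $\spec(R/\p)$. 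Since $\p$ was arbitrary, this is precisely (\ref{assump NC gor}) for $R$.

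For the second step, because $\phi : R \to S$ is assumed to be either essentially of finite type or Gorenstein, Corollary \ref{stable NC gor essentially} immediately transfers (\ref{assump NC gor}) from $R$ to $S$, and the ``in particular'' clause of that corollary already yields the openness of $\fid_S(N)$ for every $S$-module $N$. The whole argument is a clean concatenation of two earlier results; there is no real obstacle, since all the substantive work has been done upstream in Theorem \ref{gor} (the nontrivial implication (1) $\Rightarrow$ (2)) and in Corollary \ref{stable NC gor essentially} (the stability of (\ref{assump NC gor}) under the two classes of homomorphisms).
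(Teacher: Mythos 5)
Your argument is correct and coincides with the paper's own proof: both reduce to verifying (\ref{assump NC gor}) for $R$ via Corollary \ref{stable NC gor essentially}, and then deduce it at each prime $\p$ from the implication (1) $\Rightarrow$ (2) of Theorem \ref{gor}, using that $\V(\p)\subseteq\fid_R(M)=\spec(R)$ makes condition (1) trivially satisfied. Nothing is missing.
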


\begin{proof}
By Corollary \ref{stable NC gor essentially}, we have only to show that (\ref{assump NC gor}) holds for $R$.
Let $\p$ be a prime ideal of $R$.
Then $\p$ belongs to $\supp_R(M)\cap\fid_R(M)\cap\mcm_R(M)$.
Since $\V(\p)$ is contained in $\spec (R)=\fid_R(M)$, it follows from Theorem \ref{gor} that $\gor(R/\p)$ contains a nonempty open subset of $\spec (R/\p)$.
\end{proof}

Let $M$ be an $R$-module.
Let $R\ltimes M$ be the direct sum of $R$ and $M$.
Then $R\ltimes M$ has the ring structure with multiplication defined by $(a,x)(b,y)=(ab,ay+bx)$ for all $a,b\in R$ and $x,y\in M$.
The ring $R\ltimes M$ is called the \textit{idealization} of $M$ over $R$.
The natural surjection $R\ltimes M\to R$ is a ring homomorphism.
It is easy to see that $\spec(R\ltimes M)=\{\p\times M \mid \p\in\spec(R)\}$, and the map 
$$
(R\ltimes M)_{\p\times M} \to R_\p \ltimes M_\p, \quad
\frac{(a,x)}{(s,y)} \mapsto \left( \frac{a}{s} \raisebox{-1.57ex}{,\,\,} \frac{sx-ay}{s^2} \right)
\qquad (a\in R,\ s\in R\setminus\p,\ x,y\in M)
$$
is a ring isomorphism.
We say that $M$ is with full support if $\supp_R(M)=\spec (R)$.

\begin{rmk}
The assumption of Corollary \ref{Gmod} says that there is a Gorenstein $R$-module with full support.
Therefore, the above corollary recovers Corollary \ref{corA'} (3) because $R$ is itself a Gorenstein $R$-module with full support if $R$ is Gorenstein.
More generally, if $R$ is a Cohen--Macaulay ring and $M$ is a canonical module of $R$, then the assumption of Corollary \ref{Gmod} is also satisfied.
However, in this case, note that $R\ltimes M$ is Gorenstein; see the proof of \cite[Theorem 3.3.6]{BH}.
For example, Weston \cite{W} gives a Cohen--Macaulay local ring with a Gorenstein module of rank two (thus with full support), but without a canonical module.
\end{rmk}

\section{The openness of the $\cm$-locus of a module}

In this section, we study the openness of the $\cm$-locus and $\mcm$-locus of a module.
More precisely, we observe that the same results as we gave in the previous sections hold for the Cohen--Macaulay property.

Let $S$ be a ring.
A ring homomorphism $\phi:R\to S$ is said to be a \textit{Cohen--Macaulay homomorphism} if $\phi$ is flat and all the fiber rings of $\phi$ are Cohen--Macaulay \cite{Sha}.
We give the following condition.
\begin{equation}\label{assump NC cm}
\cm(R/\p)\ {\rm contains\ a\ nonempty\ open\ subset\ of\ } \spec (R/\p)\ {\rm 
for\ all\ prime\ ideals\ } \p \ {\rm of\ } R.
\end{equation}
The results below are the Cohen--Macaulay analogues of Lemma \ref{stable NC gor} and Corollary \ref{stable NC gor essentially}.

\begin{lem}\label{stable NC cm}
Suppose that {\rm (\ref{assump NC cm})} holds for $R$.
Then it also holds for
\begin{enumerate}[\rm(1)]
\item a homomorphic image of $R$,
\item a localization of $R$, and
\item the image of a Cohen--Macaulay homomorphism from $R$ (e.g., a polynomial ring over $R$).
\end{enumerate}
\end{lem}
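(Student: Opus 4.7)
The plan is to imitate the proof of Lemma \ref{stable NC gor} line by line, replacing the Gorenstein property and its ancillary results by their Cohen--Macaulay counterparts. In particular, I will invoke the Cohen--Macaulay analogue of Corollary \ref{corA'}(3), namely that the $\cm$-locus of any homomorphic image of a Cohen--Macaulay ring is open; by the opening remark of this section, this is among ``the same results as we gave in the previous sections'' that are assumed to have been transferred to the Cohen--Macaulay setting.

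For (1), if $I$ is an ideal of $R$ and $S=R/I$, every prime $\q$ of $S$ has the form $\p/I$ with $\p\in\V(I)$, and $S/\q\simeq R/\p$. The hypothesis on $R$ applied at $\p$ then yields a nonempty open subset of $\spec(S/\q)$ contained in $\cm(S/\q)$. For (2), with $S$ a multiplicatively closed subset of $R$ and $\p\cap S=\emptyset$, set $W=\{\q/\p\in\spec(R/\p)\mid \q\cap S=\emptyset\}$, an open subset of $\spec(R/\p)$ carrying a natural homeomorphism $F\colon W\to\spec(R_S/\p R_S)$ that identifies $\cm(R/\p)\cap W$ with $\cm(R_S/\p R_S)$. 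Since the zero ideal of $R/\p$ lies in $W$ and, by Lemma \ref{base}(1), in any nonempty open subset of $\spec(R/\p)$ contained in $\cm(R/\p)$, intersecting with $W$ and pushing forward by $F$ gives the required open subset of $\spec(R_S/\p R_S)$ inside $\cm(R_S/\p R_S)$.

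For (3), let $\phi\colon R\to S$ be a Cohen--Macaulay homomorphism, let $\q\in\spec(S)$, and set $\p=\q\cap R$. By the hypothesis on $R$, we may choose $f\in R\setminus\p$ such that $(R/\p)_f$ is Cohen--Macaulay. Base change preserves flatness and preserves fibers, so the induced ring homomorphism $(R/\p)_f\to S\otimes_R(R/\p)_f$ is again a Cohen--Macaulay homomorphism. Since a faithfully flat (indeed flat) Cohen--Macaulay extension of a Cohen--Macaulay base is Cohen--Macaulay (the Cohen--Macaulay analogue of \cite[Corollary 3.3.15]{BH}, i.e.\ \cite[Theorem 2.1.7]{BH}), the ring $S\otimes_R(R/\p)_f$ is Cohen--Macaulay. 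The natural surjection
\[
S\otimes_R(R/\p)_f \;\simeq\; (S/\p S)_{\phi(f)} \;\twoheadrightarrow\; (S/\q)_{\phi(f)}
\]
exhibits $(S/\q)_{\phi(f)}$ as a homomorphic image of a Cohen--Macaulay ring, so the Cohen--Macaulay analogue of Corollary \ref{corA'}(3) implies that $\cm((S/\q)_{\phi(f)})$ is open in $\spec((S/\q)_{\phi(f)})$. Translating back via the natural homeomorphism $\D(\phi(f))\cap\spec(S/\q)\to\spec((S/\q)_{\phi(f)})$ (cf.\ Lemma \ref{replace}) produces a nonempty open subset of $\spec(S/\q)$ inside $\cm(S/\q)$.

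The main obstacle is essentially bookkeeping in (3): one must verify that the Cohen--Macaulay analogues of \cite[Proposition 6.4]{Sh} (stability of Cohen--Macaulay homomorphisms under base change) and \cite[Corollary 3.3.15]{BH} (transfer of the Cohen--Macaulay property through a flat extension with Cohen--Macaulay fibers) are available, and that the Cohen--Macaulay version of Corollary \ref{corA'}(3) has been established earlier in Section~5. Granted those, the argument is formally identical to the Gorenstein case and no further subtleties arise.
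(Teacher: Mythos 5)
Your argument is correct and is essentially the paper's: the paper simply says the proof of Lemma \ref{stable NC gor} carries over verbatim once one checks the Cohen--Macaulay analogues of \cite[Proposition 6.4]{Sh} and of Corollary \ref{corA'}(3). One caution on provenance, though: you assert that the Cohen--Macaulay version of Corollary \ref{corA'}(3) ``has been established earlier in Section~5.'' It has not --- within the paper that statement is Corollary \ref{CMmod}(2), which appears \emph{after} this lemma and whose proof (modelled on Corollary \ref{Gmod}) runs through Corollary \ref{stable NC cm essentially}, hence through the very lemma you are proving. Invoking it here would be circular. What you actually need in step (3) is only the ring-level fact that $\cm(T)$ is open for $T$ a homomorphic image of a Cohen--Macaulay ring, applied to $T=(S/\q)_{\phi(f)}$ as a quotient of the Cohen--Macaulay ring $S\otimes_R(R/\p)_f$; this is classical and independent of the paper, and the paper cites it as \cite[Exercise 24.2]{Mat}. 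With that substitution your proof is complete; the remaining ingredients you list ([BH, Theorem 2.1.7/2.1.10] for flat descent/ascent of Cohen--Macaulayness and the base-change stability of Cohen--Macaulay homomorphisms) match the paper's.
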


\begin{proof}
The assertions can be shown in a similar way as in the proof of Lemma \ref{stable NC gor}.
However, note the following.
First, it is seen that the same assertion as \cite[Proposition 6.4]{Sh} holds for a Cohen-Macaulay homomorphism; replace \cite[Theorem 2.3]{Sh} with \cite[Theorem 2.1.10]{BH} in the proof of \cite[Proposition 6.4]{Sh}.
Second, there is a Cohen--Macaulay version of Corollary \ref{corA'} (3); see \cite[Exercise 24.2]{Mat}.
\end{proof}

\begin{cor}\label{stable NC cm essentially}
Let $S$ be a ring, and let $\phi:R\to S$ be a ring homomorphism.
Suppose that {\rm (\ref{assump NC cm})} holds for $R$, and $\phi$ is either essentially of finite type or Cohen--Macaulay.
Then {\rm (\ref{assump NC cm})} also holds for $S$.
\end{cor}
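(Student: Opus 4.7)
The plan is to mirror the proof of Corollary \ref{stable NC gor essentially}, reducing the statement to a routine combination of the three stability operations established in Lemma \ref{stable NC cm}.

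First I would dispose of the Cohen--Macaulay case: if $\phi$ is a Cohen--Macaulay homomorphism, then (\ref{assump NC cm}) holds for $S$ directly by Lemma \ref{stable NC cm}(3).

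If instead $\phi$ is essentially of finite type, I would use the standard factorization: write $S = W^{-1}(R[X_1,\ldots,X_n]/I)$ for some polynomial ring $R[X_1,\ldots,X_n]$, some ideal $I$ of $R[X_1,\ldots,X_n]$, and some multiplicatively closed subset $W$ of $R[X_1,\ldots,X_n]/I$. The structure map $R \to R[X_1,\ldots,X_n]$ is a Cohen--Macaulay homomorphism, since it is flat and all of its fibers are polynomial rings over fields, hence regular. Applying Lemma \ref{stable NC cm}(3) along this map transfers (\ref{assump NC cm}) from $R$ to $R[X_1,\ldots,X_n]$. Then Lemma \ref{stable NC cm}(1) transfers it to the quotient $R[X_1,\ldots,X_n]/I$, and finally Lemma \ref{stable NC cm}(2) transfers it to the localization $S$.

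I do not anticipate any serious obstacle, since all the substantive work has been absorbed into Lemma \ref{stable NC cm}; the present corollary is essentially a bookkeeping combination of its three parts. The only minor point of care is verifying that a polynomial extension qualifies as a Cohen--Macaulay homomorphism, but this is immediate from the fact that polynomial rings over fields are regular and therefore Cohen--Macaulay.
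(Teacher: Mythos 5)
Your proof is correct and matches the paper's intended argument: the paper states this corollary without proof as a direct consequence of Lemma \ref{stable NC cm}, exactly via the factorization of an essentially-of-finite-type map through a polynomial extension (a Cohen--Macaulay homomorphism, by regularity of its fibers), a quotient, and a localization. Your write-up simply makes the routine bookkeeping explicit.
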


We give a lemma to state the main result of this section.

\begin{lem}\label{filtration}
Let $M$ be a nonzero $R$-module, $I$ an ideal of $R$, and $\bm{x}=x_1,\ldots,x_n$ a sequence of elements in $R$.
Suppose that there exists an integer $r>0$ such that $I^r M=0$, and $I^{i-1} M/I^i M$ is a free $R/I$-module for each $1\le i\le r$.
Then $\bm{x}$ is an $R/I$-regular sequence if and only if it is an $M$-regular sequence.
In particular, $\depth R_\p/I R_\p=\depth M_\p$ for any $\p\in\V(I)$.
\end{lem}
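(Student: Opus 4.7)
The plan is to prove the equivalence by induction on $n$ and then deduce the depth equality as a direct consequence.

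For the base case $n=1$, the idea is to exhibit a nonzero free $R/I$-submodule of $M$. Let $i_0$ be the smallest positive integer with $I^{i_0} M = 0$; this exists since $M \neq 0$ and $I^r M = 0$. Then $I^{i_0-1}M$ equals $I^{i_0-1}M/I^{i_0}M$, a nonzero free $R/I$-module, and I fix a basis element $e$. If $x_1$ is $M$-regular and $x_1 r \in I$, then $(x_1 r) e = 0$ since $x_1 r$ annihilates $I^{i_0-1}M$, hence $re = 0$, and since $e$ is a basis element, $r \in I$; this shows $x_1$ is $R/I$-regular. Conversely, if $x_1$ is $R/I$-regular and $x_1 m = 0$ with $m \neq 0$, I pick the largest $j$ with $m \in I^j M$ to obtain a nonzero $\overline m$ in the free $R/I$-module $I^j M/I^{j+1}M$ killed by $x_1$, contradicting regularity of $x_1$ on $R/I$.

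For the inductive step $n > 1$, set $\tilde R = R/x_1 R$, $\tilde I = (I + x_1 R)/x_1 R$, and $\tilde M = M/x_1 M$. The main task is to check that $\tilde M$ inherits the filtration hypothesis of the lemma over $(\tilde R, \tilde I)$ with the same $r$, so that the inductive hypothesis applies to the shorter sequence $x_2, \ldots, x_n$. Everything reduces to the identity
\[
I^{i-1}M \cap x_1 M = x_1 I^{i-1}M, \qquad 1 \le i \le r,
\]
whose nontrivial inclusion is proved by repeating the largest-$j$ argument of the base case and using that $x_1$ is $R/I$-regular (supplied by the $n=1$ case applied to the $M$-regular $x_1$). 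Granting this, direct computation yields $\tilde I^{i-1}\tilde M/\tilde I^i \tilde M \cong (I^{i-1}M/I^iM) \otimes_{R/I}(\tilde R/\tilde I)$, which is free over $\tilde R/\tilde I$, and $\tilde I^r \tilde M = 0$. The inductive hypothesis then gives the equivalence for $x_2, \ldots, x_n$ on $\tilde M$ versus $\tilde R/\tilde I$, and combining with the $n=1$ equivalence for $x_1$ closes the induction. The degenerate situation $\tilde M = 0$ corresponds exactly to $\tilde R/\tilde I = 0$ and is handled vacuously.

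The "in particular" statement follows by localization: $M_\p$ inherits the filtration hypothesis over $R_\p$ with ideal $I R_\p$, so the equivalence applied to sequences in $\p R_\p$ matches up maximal $M_\p$-regular sequences with maximal $(R_\p/I R_\p)$-regular sequences in $\p R_\p$, giving the depth equality. I expect the main obstacle to be the identity $I^{i-1}M \cap x_1 M = x_1 I^{i-1}M$, which is the technical crux needed to propagate the filtration hypothesis across modding out by $x_1$; everything else is routine manipulation of the $I$-adic filtration and short exact sequences.
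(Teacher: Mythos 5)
Your proof is correct and follows the same overall strategy as the paper: induction on $n$, with the base case exploiting freeness of the graded pieces $I^{i-1}M/I^iM$ and the inductive step showing the filtration hypothesis survives passage to a quotient by a regular element. The implementations differ in minor ways --- the paper's base case computes $\ass_R(R/I)=\ass_R(M/I^iM)$ via the short exact sequences $0\to I^{i-1}M/I^iM\to M/I^iM\to M/I^{i-1}M\to 0$, whereas you chase elements through the free graded pieces (the ``largest $j$'' trick); and the paper mods out all of $x_1,\ldots,x_{n-1}$ at once, invoking \cite[Proposition 1.1.4]{BH} to preserve exactness, whereas you peel off $x_1$ and recurse, with your identity $I^{i-1}M\cap x_1M=x_1I^{i-1}M$ being exactly the content of that exactness statement --- but these are equivalent reformulations, and your handling of the degenerate nonvanishing condition ($\tilde M=0$ iff $\tilde R/\tilde I=0$) matches the paper's explicit chain of equivalences $(R/I)\otimes_R(R/xR)\ne 0\Leftrightarrow\cdots\Leftrightarrow M\otimes_R(R/xR)\ne 0$.
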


\begin{proof}
We prove the former assertion of the lemma by induction on $n$.
Suppose that $n=1$, and $x=x_1$.
We obtain $\ass_R (R/I)=\ass_R (M/I^i M)$ for all $1\le i\le r$ by induction on $i$.
In fact, the $R/I$-module $I^{i-1} M/I^i M$ is free and there is an exact sequence $0\to I^{i-1} M/I^i M\to M/I^i M\to M/I^{i-1} M\to 0$ for each $1\le i\le r$.
Therefore, for any $1\le i\le r$, the element $x$ is $R/I$-regular if and only if it is $M/I^i M$-regular.
In particular, if $x$ is such a regular element, then it follows from \cite[Proposition 1.1.4]{BH} that the sequence
$$
0\to (I^{i-1} M/I^i M)\otimes_R (R/xR)\to (M/I^i M)\otimes_R (R/xR)\to (M/I^{i-1} M)\otimes_R (R/xR)\to 0
$$ 
is exact for any $1\le i\le r$. 
We see that 
$$
(R/I)\otimes_R (R/xR)\ne 0 \ \Leftrightarrow \  (M/I M)\otimes_R (R/xR)\ne 0\Leftrightarrow \cdots \Leftrightarrow \ M\otimes_R (R/xR)\ne 0
$$ 
by induction because $I^{i-1} M/I^i M$ is a free $R/I$-module for all $1\le i\le r$.
Thus the assertion follows.

Suppose that $n>1$.
We may assume that $\bm{x}'=x_1,\ldots,x_{n-1}$ is a regular sequence on $R/I$, $M/I M, \cdots, M/I^{r-1} M$, and $M$.
Set $\overline{R}=R/\bm{x}' R$, $\overline{I}=I \overline{R}$, and $\overline{M}=M/\bm{x}' M$.
Then $\overline{I}^r \overline{M}=0$, and $\overline{I}^{i-1} \overline{M}/\overline{I}^i \overline{M}$ is a free $\overline{R}/\overline{I}$-module for each $1\le i\le r$ because there is a commutative diagram
\[
  \xymatrix@C=20pt@R=15pt{
    0 \ar[r]
    & (I^{i-1} M/I^i M)\otimes_R \overline{R} \ar[r]
    & (M/I^i M)\otimes_R \overline{R} \ar[r] \ar[d]_{\rotatebox{90}{$\sim$}} 
    & (M/I^{i-1} M)\otimes_R \overline{R} \ar[r] \ar[d]_{\rotatebox{90}{$\sim$}} 
    & 0 \\
    & 
    & (R/I^i)\otimes_R M\otimes_R \overline{R} \ar[d]_{\rotatebox{90}{$\sim$}}
    & (R/I^{i-1})\otimes_R M\otimes_R \overline{R} \ar[d]_{\rotatebox{90}{$\sim$}}
    & \\
    0 \ar[r]
    & \overline{I}^{i-1} \overline{M}/\overline{I}^i \overline{M} \ar[r]
    & \overline{M}/\overline{I}^i \overline{M} \ar[r]  
    & \overline{M}/\overline{I}^{i-1} \overline{M} \ar[r] 
    & 0 \\
  }
\]
with exact rows by \cite[Proposition 1.1.4]{BH}.
Applying the case $n=1$ shows the assertion.

Next we prove the latter assertion of the lemma.
Let $\p$ be a prime ideal of $R$ containing $I$.
Since $M/IM$ is a nonzero free $R/I$-module, we obtain $M_\p\ne 0$.
It is seen that $I^r M_\p=0$, and $I^{i-1} M_\p/I^i M_\p$ is a free $R_\p/I R_\p$-module for each $1\le i\le r$.
The assertion follows from the former.
\end{proof}

Now we can prove the main result of this section by using \cite[Theorem 24.5]{Mat}.
It corresponds to Theorems \ref{thmA} and \ref{gor}.

\begin{thm}\label{cm}
Let $M$ be an $R$-module, and $\p\in\supp_R(M)\cap\cm_R(M)$.
Then the following two conditions are equivalent.
\begin{enumerate}[\rm(1)]
\item $\cm_R(M)$ contains a nonempty open subset of $\V(\p)$.
\item $\cm(R/\p)$ contains a nonempty open subset of $\spec (R/\p)$.
\end{enumerate}
In addition, if $\p$ belongs to $\mcm_R(M)$, then the following is also equivalent.
\begin{enumerate}[\rm(3)]
\item $\mcm_R(M)$ contains a nonempty open subset of $\V(\p)$.
\end{enumerate}
\end{thm}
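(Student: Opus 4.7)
The plan is to parallel the proof of Theorem~\ref{gor} in its two-stage structure: first handle a base case (now $\dim M_\p=0$, replacing $\height\p=0$), then reduce the general case to it by killing a regular sequence. The $\Ext$/injective-resolution arguments from the Gorenstein proof are replaced by Lemma~\ref{filtration}. Throughout, Lemma~\ref{replace} allows me to freely replace $R$ by $R_f$ for any $f\in R\setminus\p$.

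In the base case $\dim M_\p=0$, $M_\p$ is nonzero of finite length, so $\p^rM_\p=0$ for some $r>0$. Using Lemma~\ref{base}(2) and (6), after localization $\p^rM=0$ and each $\p^{i-1}M/\p^iM$ is a free $R/\p$-module, giving $\supp_R(M)=\V(\p)$ and hence $\dim M_\q=\dim R_\q/\p R_\q$ for $\q\in\V(\p)$. Combined with $\depth M_\q=\depth R_\q/\p R_\q$ from Lemma~\ref{filtration}, this yields $M_\q\in\cm_R(M)$ if and only if $(R/\p)_{\q/\p}\in\cm(R/\p)$, and Lemma~\ref{replace2} translates this into (1)$\iff$(2). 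For general $\p$, I set $n=\dim M_\p$, pick an $M_\p$-regular sequence $\bm{y}=y_1,\ldots,y_n$ in $\p$, and use Lemma~\ref{base}(3) to arrange that $\bm{y}$ is $M$-regular after localization. Writing $\overline R=R/\bm{y}R$, $\overline\p=\p/\bm{y}R$, and $\overline M=M/\bm{y}M$, flatness gives $M_\q\in\cm_R(M)\Leftrightarrow \overline M_{\overline\q}\in\cm_{\overline R}(\overline M)$, and since $\overline R/\overline\p=R/\p$, condition (2) is unchanged; Lemma~\ref{replace2} reduces the general case of (1)$\iff$(2) to the base case for $\overline M$ at $\overline\p$.

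For the MCM addendum I assume $\p\in\mcm_R(M)$, so $n=\height\p=\dim R_\p$. The inclusion $\mcm_R(M)\subseteq\cm_R(M)$ gives (3)$\Rightarrow$(1). For (2)$\Rightarrow$(3), I sharpen the choice of $\bm{y}$ so that it is simultaneously $M_\p$-regular and a system of parameters for $R_\p$: at stage $i$ one must avoid the associated primes of $M_\p/(y_1,\ldots,y_{i-1})M_\p$ and the maximal-coheight minimal primes of $R_\p/(y_1,\ldots,y_{i-1})$, both finite collections of primes strictly contained in $\p R_\p$ thanks to the MCM hypothesis, and prime avoidance supplies such a $y_i\in\p R_\p$. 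With this $\bm{y}$, $\sqrt{\bm{y}R_\p}=\p R_\p$, and by Lemma~\ref{base}(4) after further localization $\sqrt{\bm{y}R}=\p$; thus $\overline\p$ is the nilradical of $\overline R$, so $\dim\overline R_{\overline\q}=\dim(R/\p)_{\q/\p}$ for every $\q\in\V(\p)$. Invoking \cite[Theorem~24.5]{Mat} yields the dimension identity $\dim R_\q=n+\dim R_\q/\p R_\q$, which paired with $\depth M_\q=n+\depth R_\q/\p R_\q$ from Lemma~\ref{filtration} forces $\depth M_\q=\dim R_\q$ whenever $(R/\p)_{\q/\p}$ is Cohen--Macaulay, i.e., $M_\q$ is MCM. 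Lemma~\ref{replace2} then transports (2) into (3).

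The main obstacle is the MCM step: one must tailor $\bm{y}$ simultaneously to $M_\p$ and $R_\p$, and then use Matsumura's Theorem~24.5 to supply the catenary-type dimension equality that promotes CMness of $(R/\p)_{\q/\p}$ into MCMness of $M_\q$.
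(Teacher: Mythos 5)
Your proposal is correct and takes essentially the same route as the paper: reduce to a zero-dimensional base case by cutting down along a regular sequence chosen via Lemma \ref{base}, and transfer depth and dimension through Lemma \ref{filtration} together with Lemmas \ref{replace} and \ref{replace2}. The only difference is organizational: the paper first proves (2)$\Leftrightarrow$(3), using exactly your device of an $M$-regular sequence $\bm{x}$ with $\height \bm{x}R=\height\p$ (i.e.\ a system of parameters of $R_\p$), and then deduces (1)$\Leftrightarrow$(2) by passing to $R/\operatorname{Ann}_R(M)$, where $\cm$ and $\mcm$ coincide, whereas you prove (1)$\Leftrightarrow$(2) directly with base case $\dim M_\p=0$ and then sharpen the sequence for the $\mcm$ addendum; both orderings are sound.
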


\begin{proof}
First of all, we can freely replace our ring $R$ with its localization $R_f$ for any element $f\in R\setminus\p$ to prove the theorem; we apply Lemma \ref{replace}.

(i): We consider the case where $\p$ belongs to $\mcm_R(M)$. We prove the equivalence (2) $\Leftrightarrow$ (3).

First, we deal with the case $\height\p=0$.
Since $\p$ is a minimal prime ideal of $R$, we may assume that $\p^r=0$ for some integer $r>0$, and $\p^{i-1} M/\p^i M$ is a free $R/\p$-module for each $1\le i\le r$ by (4) and (6) of Lemma \ref{base}.
Fix $\q\in \V(\p)$.
Note that $M_\q\ne 0$.
Lemma \ref{filtration} implies that
\begin{equation}\label{cm1}
\depth R_\q/\p R\q=\depth M_\q .
\end{equation}
On the other hand, $\p$ is the only minimal prime ideal of $R$ since $\p^r=0$.
Thus, we obtain
\begin{equation}\label{cm2}
\dim R_\q/\p R\q=\dim R_\q .
\end{equation}
It follows from (\ref{cm1}) and (\ref{cm2}) that $\q/\p$ is in $\cm(R/\p)$ if and only if $\q$ is in $\mcm_R(M)$.
Applying Lemma \ref{replace2} to $I=\p$ and $S=\mcm_R(M)$, we see that the equivalence (2) $\Leftrightarrow$ (3) holds.

Next, we handle the general case.
Put $n=\depth M_\p=\dim R_\p$.
We may assume that there exists a sequence $\bm{x}=x_1,\ldots,x_n$ of elements in $\p$ such that $\bm{x}$ is an $M$-regular sequence and $\height\bm{x}R=\height\p=n$ by Lemma \ref{base} (3). 
Fix $\q\in \V(\p)$.
We have
$$
\depth M_\q /\bm{x} M_\q=\depth M_\q-n,\quad
\dim R_\q /\bm{x} R_\q=\dim R_\q-n.
$$
It is seen that $\q$ is in $\mcm_R(M)$ if and only if $\q /\bm{x} R$ is in $\mcm_{R/\bm{x} R}(M/\bm{x} M)$.
A similar argument to the latter part of the proof of Theorem \ref{gor} shows the equivalence (2) $\Leftrightarrow$ (3).

(ii): Finally, we prove that the equivalence (1) $\Leftrightarrow$ (2) holds if $\p$ belongs to $\supp_R(M)\cap\cm_R(M)$.
Put $I=\operatorname{Ann}_R (M)$, and $\overline{R}=R/I$.
Then $\cm_{\overline{R}} (M)=\mcm_{\overline{R}} (M)$ since $\supp_{\overline{R}} (M)=\spec (\overline{R})$.
It is easy to see that $\q$ is in $\cm_R (M)$ if and only if $\q/I$ is in $\mcm_{\overline{R}} (M)$ for any $\q\in \V(I)$.
In particular, $\p/I$ belongs to $\mcm_{\overline{R}} (M)$.
An analogous argument to the latter part of the proof of Theorem \ref{gor} shows the equivalence (1) $\Leftrightarrow$ (2); we apply Lemma \ref{replace2} and the case (i).
\end{proof}

The following corollaries can be shown similarly as in the proof of Corollaries \ref{corA} and \ref{Gmod}, respectively.

\begin{cor}\label{cor cm}
Let $M$ be an $R$-module.
\begin{enumerate}[\rm(1)]
\item Suppose that $\cm(R/\p)$ contains a nonempty open subset of $\spec (R/\p)$ for any $\p\in\cm_R(M)\cap\supp_R(M)$.
Then $\cm_R(M)$ is an open subset of $\spec (R)$.
\item Suppose that $\cm(R/\p)$ contains a nonempty open subset of $\spec (R/\p)$ for any $\p\in\mcm_R(M)\cap\supp_R(M)$.
Then $\mcm_R(M)$ is an open subset of $\spec (R)$.
\item In particular, if {\rm (\ref{assump NC cm})} holds for $R$, then $\cm_R(M)$ and $\mcm_R(M)$ are open.
\end{enumerate}
\end{cor}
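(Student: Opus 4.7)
The plan is to follow the template of Corollary \ref{corA}, invoking the topological Nagata criterion (Lemma \ref{Mat24.2}) together with Theorem \ref{cm}. Both $\cm_R(M)$ and $\mcm_R(M)$ are easily seen to be stable under generalization, so (1) of Lemma \ref{Mat24.2} is automatic; the work is to establish (2) of that lemma, namely to show that each locus contains a nonempty open subset of $\V(\p)$ for every $\p$ in the locus.

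For part (1), I would fix $\p\in\cm_R(M)$ and split into two cases. If $\p\notin\supp_R(M)$, then Remark \ref{rmk2} provides a nonempty open subset of $\V(\p)$ contained in $\spec(R)\setminus\supp_R(M)\subseteq\cm_R(M)$. If $\p\in\supp_R(M)\cap\cm_R(M)$, then by hypothesis $\cm(R/\p)$ contains a nonempty open subset of $\spec(R/\p)$, and the equivalence (1)$\Leftrightarrow$(2) of Theorem \ref{cm} immediately yields a nonempty open subset of $\V(\p)$ inside $\cm_R(M)$. Either way condition (2) of Lemma \ref{Mat24.2} is met, so $\cm_R(M)$ is open.

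For part (2), the argument is identical, replacing $\cm_R(M)$ by $\mcm_R(M)$ and using the equivalence (2)$\Leftrightarrow$(3) of Theorem \ref{cm} (which applies since $\p\in\mcm_R(M)$ in the relevant case). The only subtle point to be careful about is that Theorem \ref{cm}'s equivalence (2)$\Leftrightarrow$(3) is stated under the hypothesis $\p\in\mcm_R(M)$, but that is exactly the situation here because we are running through $\p\in\mcm_R(M)\cap\supp_R(M)$.

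Finally, part (3) is essentially a tautology: if (\ref{assump NC cm}) holds, then in particular $\cm(R/\p)$ contains a nonempty open subset of $\spec(R/\p)$ for every $\p\in\spec(R)$, so the hypotheses of (1) and (2) are trivially satisfied for any $R$-module $M$, giving openness of $\cm_R(M)$ and $\mcm_R(M)$. I do not foresee any genuine obstacle here—all the real work has already been done in Theorem \ref{cm}, and the only thing to verify is that the bookkeeping of the two cases (inside versus outside $\supp_R(M)$) goes through exactly as in the proof of Corollary \ref{corA}.
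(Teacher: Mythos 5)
Your proposal is correct and follows exactly the route the paper intends: the paper's proof of this corollary is simply ``shown similarly as in the proof of Corollary \ref{corA}'', i.e.\ verify condition (2) of Lemma \ref{Mat24.2} by splitting on whether $\p\in\supp_R(M)$, using Remark \ref{rmk2} in one case and Theorem \ref{cm} in the other. Your observation that $\p\in\mcm_R(M)\cap\supp_R(M)$ puts you in the setting where the extra equivalence (3) of Theorem \ref{cm} applies is exactly the right bookkeeping.
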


\begin{cor}\label{CMmod}
\begin{enumerate}[\rm(1)]
\item Let $S$ be a ring, and let $\phi:R\to S$ be either essentially of finite type or Cohen--Macaulay.
Suppose that there exists an $R$-module $M$ such that $\supp_R(M)=\cm_R(M)=\spec (R)$.
Then {\rm (\ref{assump NC cm})} holds for $S$, and thus $\cm_S(N)$ and $\mcm_S(N)$ are open for any $S$-module $N$.
\item In particular, the $\cm$ and $\mcm$-loci of a module over a homomorphic image of a Cohen--Macaulay ring are open.
\end{enumerate}
\end{cor}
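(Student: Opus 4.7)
The plan for (1) is to mimic the argument for Corollary \ref{Gmod} essentially verbatim. By Corollary \ref{stable NC cm essentially}, it is enough to verify that (\ref{assump NC cm}) holds for $R$, for then it also holds for $S$, and Corollary \ref{cor cm} (3) immediately yields the openness of both $\cm_S(N)$ and $\mcm_S(N)$ for every $S$-module $N$. So my only task is to produce, for each prime $\p$ of $R$, a nonempty open subset of $\spec (R/\p)$ contained in $\cm(R/\p)$.

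For this, I would fix $\p\in\spec (R)$ and apply Theorem \ref{cm} to the pair $(M,\p)$. By hypothesis $\p\in\supp_R(M)\cap\cm_R(M)$, which is exactly what Theorem \ref{cm} demands. Moreover, since $\cm_R(M)=\spec(R)$, the full space $\V(\p)$ is trivially a nonempty open subset of $\V(\p)$ contained in $\cm_R(M)$, so condition (1) of Theorem \ref{cm} holds. The equivalence (1) $\Leftrightarrow$ (2) then gives condition (2), i.e., $\cm(R/\p)$ contains a nonempty open subset of $\spec(R/\p)$, which is exactly (\ref{assump NC cm}) for $R$.

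For (2), the plan is to apply (1) with a chosen surjection $\phi\colon T\twoheadrightarrow R$ from a Cohen--Macaulay ring $T$, taking the module of (1) to be $T$ itself: $\phi$ is essentially of finite type (being surjective, hence of finite type), and the $T$-module $T$ has full support and satisfies $\cm_T(T)=\spec(T)$ since $T$ is Cohen--Macaulay. Conclusion (1) then gives openness of both $\cm_R(N)$ and $\mcm_R(N)$ for every $R$-module $N$.

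There is no real obstacle here; the proof is a straightforward composition of Theorem \ref{cm}, Corollary \ref{stable NC cm essentially}, and Corollary \ref{cor cm} (3). The only subtlety worth noting is that I do \emph{not} invoke the (2) $\Leftrightarrow$ (3) equivalence of Theorem \ref{cm} (which would require $\p\in\mcm_R(M)$, an assumption \emph{not} made here); instead, the openness of the $\mcm$-locus over $S$ is extracted directly from Corollary \ref{cor cm} (3) once (\ref{assump NC cm}) has been transferred from $R$ to $S$.
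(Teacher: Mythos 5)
Your proposal is correct and follows essentially the same route the paper intends: the paper proves Corollary \ref{CMmod} by the same argument as Corollary \ref{Gmod}, namely reducing via Corollary \ref{stable NC cm essentially} to checking (\ref{assump NC cm}) for $R$, deducing it from the (1) $\Leftrightarrow$ (2) equivalence of Theorem \ref{cm} applied to $M$ and each prime $\p$, and then concluding with Corollary \ref{cor cm}. Your remark that the (2) $\Leftrightarrow$ (3) equivalence (and hence the hypothesis $\p\in\mcm_R(M)$) is not needed is accurate.
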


The result below is a Cohen--Macaulay version of Proposition \ref{gor fiber}.
It recovers \cite[Remark 3.4]{DJ}.

\begin{prop}\label{cm fiber}
Let $S$ be a ring, and let $\phi:R\to S$ be either essentially of finite type or Cohen--Macaulay.
The loci $\cm_{S} (M)$ and $\mcm_{S} (M)$ are open for any $S$-module $M$ if $R$ is semi-local and the natural ring homomorphism from $R_\p$ to its completion is Cohen--Macaulay for every prime ideal $\p$ of $R$.
In particular, if $R$ is an Artinian ring, then $\cm_{S} (M)$ and $\mcm_{S} (M)$ are open for any $S$-module $M$.
\end{prop}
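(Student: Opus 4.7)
The plan is to model the proof directly on that of Proposition~\ref{gor fiber}, systematically replacing ``Gorenstein'' by ``Cohen--Macaulay''. By Corollary~\ref{stable NC cm essentially} combined with Corollary~\ref{cor cm}~(3), it is enough to verify that (\ref{assump NC cm}) holds for $R$ itself; once that is established, the openness of $\cm_S(M)$ and $\mcm_S(M)$ follows for any $S$-module $M$.

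To verify (\ref{assump NC cm}), I would fix a prime ideal $\p$ of $R$ and let $\m_1,\ldots,\m_r$ be the (finitely many) maximal ideals of $R$ containing $\p$, using that $R$ is semi-local. For each $i$, the hypothesis tells us that $R_{\m_i}\to\widehat{R_{\m_i}}$ is a Cohen--Macaulay homomorphism. I would then invoke the Cohen--Macaulay analogue of \cite[Corollary~6.5]{Sh} to conclude that $\cm(R_{\m_i}/\p R_{\m_i})$ is open in $\spec(R_{\m_i}/\p R_{\m_i})$. This analogue is obtained by the same proof as in the Gorenstein case, but with \cite[Corollary~3.3.15]{BH} replaced by the Cohen--Macaulay ascent/descent theorem \cite[Theorem~2.1.10]{BH}; this substitution is of exactly the kind already made in Lemma~\ref{stable NC cm}. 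This is the only non-formal ingredient of the proof and is where the main (but mild) obstacle lies.

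With those openness statements in hand, I would set
\[
U_i=\{\q/\p\in\spec(R/\p)\mid \q\subseteq\m_i\}\qquad(1\le i\le r),
\]
so that the natural map $F_i:U_i\to\spec(R_{\m_i}/\p R_{\m_i})$ is a homeomorphism satisfying $F_i^{-1}(\cm(R_{\m_i}/\p R_{\m_i}))=\cm(R/\p)\cap U_i$. Thus $\cm(R/\p)\cap U_i$ is open in $U_i$, so there exists an open set $V_i\subseteq\spec(R/\p)$ with $V_i\cap U_i=\cm(R/\p)\cap U_i$. The zero ideal of $R/\p$ lies in every $U_i$ (since $\p\subseteq\m_i$) and lies in $\cm(R/\p)$ (since $(R/\p)_{(0)}$ is a field), hence it lies in each $V_i$. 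Therefore $V=V_1\cap\cdots\cap V_r$ is a nonempty open subset of $\spec(R/\p)$. Any $Q\in V$ belongs to some $U_i$ (because $\bigcup_i U_i=\spec(R/\p)$, each prime being contained in some maximal ideal of $R$), and then $Q\in V\cap U_i\subseteq V_i\cap U_i=\cm(R/\p)\cap U_i\subseteq\cm(R/\p)$. This shows that $\cm(R/\p)$ contains a nonempty open subset of $\spec(R/\p)$, finishing the verification of (\ref{assump NC cm}).

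Finally, I would dispatch the Artinian case by observing that if $R$ is Artinian, then for any prime ideal $\p$ of $R$ one has $\p^n R_\p=0$ for some $n$, so $R_\p$ is already complete and the canonical map $R_\p\to\widehat{R_\p}$ is an isomorphism, hence trivially Cohen--Macaulay. Thus the hypotheses of the first part apply, and the conclusion follows.
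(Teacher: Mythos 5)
Your proposal is correct and follows essentially the same route as the paper: reduce to verifying (\ref{assump NC cm}) for the semi-local ring $R$, invoke the Cohen--Macaulay analogue of \cite[Corollary 6.5]{Sh} at each of the finitely many maximal ideals containing $\p$, and patch the resulting open sets exactly as in the proof of Proposition \ref{gor fiber}. The only cosmetic difference is that the paper justifies the Cohen--Macaulay analogue by citing the proofs of \cite[Theorems 3.3 and 6.4]{Sh} together with \cite[Theorems 2.1.7 and 2.1.10]{BH} and \cite[Exercise 24.2]{Mat}, whereas you point to the same substitutions already made in Lemma \ref{stable NC cm}; both are adequate.
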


\begin{proof}
Using the proof of \cite[Theorems 3.3 and 6.4]{Sh}, we see that there is a Cohen--Macaulay version of \cite[Corollary 6.5]{Sh}; see \cite[Theorems 2.1.7 and 2.1.10]{BH}, and \cite[Exercise 24.2]{Mat}.
Hence, the proposition is shown analogously as in the proof of Proposition \ref{gor fiber}.
\end{proof}

\section{The openness of the $(\S_n)$-locus of a module}

In this section, we study the openness of the $(\S_n)$-locus and $(\T_n)$-locus of a module.
The main result of this section is the theorem below.
Note that the assumptions of the theorem are weaker than the condition (2) in Lemma \ref{Mat24.2}.

\begin{thm}\label{Sn open}
Let $M$ be an $R$-module, and let $n\ge 0$ be an integer.
\begin{enumerate}[\rm(1)]
\item Suppose that $\S_n^R(M)$ contains a nonempty open subset of $\V(\p)$ for all $\p\in\supp_R(M)\cap\S_n^R(M)$ such that $\height\p<n$.
Then $\S_n^R(M)$ is an open subset of $\spec (R)$.
\item Suppose that $\T_n^R(M)$ contains a nonempty open subset of $\V(\p)$ for all $\p\in\supp_R(M)\cap\T_n^R(M)$ such that $\dim M_\p<n$.
Then $\T_n^R(M)$ is an open subset of $\spec (R)$.
\end{enumerate}
\end{thm}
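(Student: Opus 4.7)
The plan is to apply the topological Nagata criterion (Lemma~\ref{Mat24.2}): since $\S_n^R(M)$ and $\T_n^R(M)$ are already known to be stable under generalization (as noted after Lemma~\ref{Mat24.2}), it suffices to check that each locus contains a nonempty open subset of $\V(\p)$ for every $\p$ in it. The hypothesis of the theorem supplies this directly for primes with small height (resp.\ small $\dim M_\p$) lying in $\supp_R(M)$; the case $\p \notin \supp_R(M)$ is handled by Remark~\ref{rmk2}, so the remaining task is the case $\height \p \ge n$ (resp.\ $\dim M_\p \ge n$).

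For that remaining case in part~(1), since $\depth M_\p \ge n$, I would choose an $M_\p$-regular sequence $x_1,\dots,x_n \in \p$ and, via Lemma~\ref{base}(3) together with Lemma~\ref{replace}, replace $R$ by a localization $R_f$ so that $x_1,\dots,x_n$ becomes $M$-regular globally in $R$. Then for every $\q \in \V(\p)$, the sequence remains $M_\q$-regular and $\depth M_\q \ge n$. To conclude $\q \in \S_n^R(M)$ for $\q$ in an open neighborhood of $\p$ in $\V(\p)$, I would split each $\q' \subseteq \q$ in $\supp_R(M)$ into three subcases: if $\q' \subseteq \p$, stability from $\p \in \S_n^R(M)$ suffices; if $\q' \supseteq (x_1,\dots,x_n)$, the regular sequence gives $\depth M_{\q'} \ge n$; if $\q'$ is incomparable with $\p$, then $\V(\q' + \p)$ is a proper closed subset of $\V(\p)$ not containing $\p$, so excluding an appropriate (noetherianly finite) union of such closed subsets produces the desired open neighborhood.

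Part~(2) will then reduce to part~(1) via the homeomorphism $\supp_R(M) \cong \spec(R/\operatorname{Ann}_R(M))$ of Lemma~\ref{replace2}: for $\p \in \supp_R(M)$, $\dim M_\p$ as an $R$-module equals $\height(\p/\operatorname{Ann}_R(M))$ in $R/\operatorname{Ann}_R(M)$, so the $(\T_n)$-condition for $M$ over $R$ restricted to $\supp_R(M)$ corresponds to the $(\S_n)$-condition for $M$ viewed over $R/\operatorname{Ann}_R(M)$ (where $M$ has full support), and the hypothesis of~(2) transfers to the hypothesis of~(1). Combining the resulting openness of $\T_n^R(M) \cap \supp_R(M)$ with the open set $\spec(R) \setminus \supp_R(M) \subseteq \T_n^R(M)$ yields openness of $\T_n^R(M)$ in $\spec(R)$.

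The hard part will be controlling the ``incomparable'' primes $\q'$ arising in part~(1) when $\height \p \ge n$: I expect to need a noetherian argument showing that only finitely many closed subsets $\V(\q' + \p)$ of $\V(\p)$ must be excluded so that what remains is still a neighborhood of $\p$. A natural angle is to exploit that such bad $\q'$ must lie in $\D((x_1,\dots,x_n))$ -- since otherwise the regular sequence would already force $\depth M_{\q'} \ge n$ -- and to combine this constraint with the finiteness of associated primes and noetherian induction on $\supp_R(M)$.
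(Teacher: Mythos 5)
Your reduction of part (2) to part (1) and your treatment of the primes with $\height\p<n$ or $\p\notin\supp_R(M)$ both match the paper, but there is a genuine gap in the remaining case $\height\p\ge n$, and it is exactly the point you flag as ``the hard part.'' After localizing so that a sequence $\bm{x}=x_1,\ldots,x_n$ in $\p$ is $M$-regular, the regular sequence only controls primes $\q'$ containing $\bm{x}R$, and the membership $\p\in\S_n^R(M)$ only controls primes $\q'\subseteq\p$. The remaining ``bad'' primes $\q'$ (incomparable with $\p$ and avoiding $\bm{x}$) may be infinite in number, and the union of the closed sets $\V(\q'+\p)$ over the bad ones need not be contained in a proper closed subset of $\V(\p)$; showing that it is amounts to the very assertion you are trying to prove, and no finiteness or noetherian device is actually supplied. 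Worse, the claim you are attempting in this case --- that every $\p\in\supp_R(M)\cap\S_n^R(M)$ with $\height\p\ge n$ automatically has a nonempty open subset of $\V(\p)$ inside $\S_n^R(M)$ --- is not an unconditional fact: the theorem's hypothesis (or some global information about the non-locus) must enter even in the large-height case, and your outline gives it no way to do so.

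The paper avoids the problem by organizing the proof differently. It sets $X=\spec(R)\setminus\S_n^R(M)$, writes $\overline{X}=\V(I)$, and shows that every minimal prime $\p$ of $I$ lies in $X$, which gives $\overline{X}=X$ because $X$ is stable under specialization. Arguing by contradiction with $\p\in\S_n^R(M)$ minimal over $I$ and $\height\p\ge n$, one may localize at some $f\notin\p$ so that $\sqrt{IR_f}=\p R_f$ (Lemma \ref{base}(4)) in addition to making $\bm{x}$ an $M_f$-regular sequence; then $\D(f)\cap X$ is nonempty, and any $\q\in\D(f)\cap X$ admits a witness $\q'\subseteq\q$ with $\depth M_{\q'}<\inf\{n,\height\q'\}$. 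The key observation is that this witness itself lies in $\D(f)\cap X\subseteq\D(f)\cap\V(I)=\D(f)\cap\V(\p)$, so $\q'\supseteq\p\supseteq\bm{x}R$ and hence $\depth M_{\q'}\ge n$, a contradiction. It is the minimality of $\p$ in $\overline{X}$ that forces the problematic primes into $\V(\p)$, which is precisely the control your direct, prime-by-prime verification of condition (2) of Lemma \ref{Mat24.2} lacks. I would restructure your argument along these lines rather than trying to complete the excluded-closed-sets step.
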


\begin{proof}
(1): Put $X=\spec (R)\setminus\S_n^R(M)$. 
Let $\overline{X}$ be the closure of $X$.
It suffices to show that $\overline{X}=X$.
There exists an ideal $I$ of $R$ such that $\overline{X}=\V(I)$ since $\overline{X}$ is closed.
We may assume $I\ne R$.
\begin{spacing}{1.2}
\ \textbf{Claim.} \ If $\p$ is a minimal prime ideal of $I$, then $\p$ belongs to $X$.
\end{spacing}
\noindent \textit{Proof of Claim.}
We prove the claim by contradiction.
Suppose that there exists a minimal prime ideal $\p$ of $I$, which belongs to $\S_n^R(M)$.

(i): Suppose either $\p\notin\supp_R(M)$ or $\height\p<n$.
In these cases, $\S_n^R(M)$ contains a nonempty open subset of $\V(\p)$; see Remark \ref{rmk2} and the assumption of (1).
We can choose $f\in R\setminus\p$ such that $\D(f)\cap\V(\p)$ is contained in $\S_n^R(M)$ by Lemma \ref{base} (1).
Furthermore, it follows from Lemma \ref{base} (4) that we may assume $\p R_f=\sqrt{I R_f}$.
Now $\p$ is in $\V(I)=\overline{X}$, and $\D(f)$ is a neighbourhood of $\p$.
Hence $\D(f)\cap X$ is nonempty.
On the other hand, $\D(f)\cap X$ is contained in $\D(f)\cap\V(I)=\D(f)\cap\V(\p)$ since $\p R_f=\sqrt{I R_f}$.
It is seen that $\D(f)\cap X$ is contained in $\S_n^R(M)\cap X=\emptyset$.
This is a contradiction.

(ii): Suppose that $\p\in\supp_R(M)$ and $\height\p\ge n$.
Since $\p$ belongs to $\S_n^R(M)$ and $\height\p\ge n$, we have $\depth M_\p \ge {\rm inf}\{n, \height\p \}=n$.
Now $M_\p\ne0$.
It follows from (3) and (4) of Lemma \ref{base} that there exist a sequence $\bm{x}=x_1,\ldots,x_n$ of elements in $\p$ and $f\in R\setminus\p$ such that $\bm{x}$ is an $M_f$-regular sequence and $\p R_f=\sqrt{I R_f}$.
Similarly as in the proof of (i), $\D(f)\cap X$ is nonempty.
Let $\q\in\D(f)\cap X$.
There exists a prime ideal $\q'$ of $R$ such that $\q'\subseteq\q$ and $\depth M_{\q'} < {\rm inf}\{n, \height\q' \}$ because $\q$ is not in $\S_n^R(M)$.
It is easy to see that $\q'$ belongs to the subset $\D(f)\cap X$ of $\D(f)\cap\V(I)=\D(f)\cap\V(\p)$.
We obtain $\depth M_{\q'}\ge\operatorname{grade}(\q' R_f, M_f)\ge n$ since $\bm{x}$ is contained in $\q'$.
This is a contradiction.
\begin{spacing}{1.2}
\end{spacing}
Let $\p_1,\ldots,\p_r$ be all the minimal prime ideals of $I$.
The locus $\S_n^R(M)$ is stable under generalization.
Therefore, we have $\overline{X}=\V(I)=\bigcup_{i=1}^r \V(\p_i)\subseteq X$.

(2): Put $I=\operatorname{Ann}_R (M)$, and $\overline{R}=R/I$.
It is easy to see that $\dim M_\p=\height(\p/I)$, and $\p$ is in $\T_n^R(M)$ if and only if $\p/I$ is in $\S_n^{\overline{R}}(M)$ for any $\p\in \V(I)$.
It follows from Lemma \ref{replace2} and the assertion (1) that $\S_n^{\overline{R}}(M)$ is an open subset of $\spec(\overline{R})$.
Then $\T_n^R(M)\cap\V(I)$ is an open subset of $\V(I)$ since there is a natural homeomorphism $\V(I)\to\spec(\overline{R})$.
There exists an open subset $U$ of $\spec(R)$ such that $\T_n^R(M)\cap\V(I)=U\cap\V(I)$.
We easily obtain $\T_n^R(M)=U\cup\D(I)$.
\end{proof}

The following result is a corollary of the above theorem.

\begin{cor}\label{cm to Sn}
Let $M$ be an $R$-module, and let $n\ge 0$ be an integer.
\begin{enumerate}[\rm(1)]
\item If $\mcm_R(M)$ is open, then so is $\S_n^R(M)$.
\item If $\cm_R(M)$ is open, then so is $\T_n^R(M)$.
\end{enumerate}
\end{cor}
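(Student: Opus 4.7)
The plan is to apply Theorem \ref{Sn open}, whose hypotheses are weaker than the full topological Nagata criterion, so the work reduces to checking the local conditions only at primes $\p$ with $\height\p<n$ in (1), and with $\dim M_\p<n$ in (2). The key observation is that at such small primes, the Serre-type condition degenerates into a full Cohen--Macaulay-type condition, which lets us invoke the assumed openness of $\mcm_R(M)$ or $\cm_R(M)$.

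For (1), I would take $\p\in\supp_R(M)\cap\S_n^R(M)$ with $\height\p<n$ and show that $\p\in\mcm_R(M)$. Indeed, since $\p\in\S_n^R(M)$ we have $\depth M_\p\ge\inf\{n,\height\p\}=\height\p=\dim R_\p$, while the reverse inequality $\depth M_\p\le\dim R_\p$ is automatic. Hence $M_\p$ is maximal Cohen--Macaulay. Now by hypothesis $\mcm_R(M)$ is open, so there is an open neighborhood $U$ of $\p$ with $U\subseteq\mcm_R(M)$; then $U\cap\V(\p)$ is a nonempty open subset of $\V(\p)$, and for every $\q\in U\cap\V(\p)$ we have $\depth M_\q=\dim R_\q=\height\q\ge\inf\{n,\height\q\}$, so $\q\in\S_n^R(M)$. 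Theorem \ref{Sn open}(1) then yields openness of $\S_n^R(M)$.

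For (2), the argument is parallel. Take $\p\in\supp_R(M)\cap\T_n^R(M)$ with $\dim M_\p<n$. From $\depth M_\p\ge\inf\{n,\dim M_\p\}=\dim M_\p$, combined with the standard inequality $\depth M_\p\le\dim M_\p$, we conclude $\depth M_\p=\dim M_\p$, i.e., $\p\in\cm_R(M)$. Using that $\cm_R(M)$ is open, pick an open neighborhood $U$ of $\p$ inside $\cm_R(M)$; then $U\cap\V(\p)$ is a nonempty open subset of $\V(\p)$ contained in $\T_n^R(M)$, since Cohen--Macaulayness of $M_\q$ gives $\depth M_\q=\dim M_\q\ge\inf\{n,\dim M_\q\}$. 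Theorem \ref{Sn open}(2) then gives openness of $\T_n^R(M)$.

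I do not expect any serious obstacle: the only point that requires care is the reduction from the Serre condition to the (maximal) Cohen--Macaulay condition under the small-height (resp.\ small-dimension) restriction, and this is immediate from the definitions together with the standard depth $\le$ dimension inequality. Every other step is a direct application of the earlier results of the paper.
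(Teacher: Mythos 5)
Your proposal is correct and follows essentially the same route as the paper: reduce to Theorem \ref{Sn open}, observe that a prime $\p$ in the support with $\height\p<n$ (resp.\ $\dim M_\p<n$) lying in $\S_n^R(M)$ (resp.\ $\T_n^R(M)$) must lie in $\mcm_R(M)$ (resp.\ $\cm_R(M)$), and then intersect an open neighbourhood from the assumed open locus with $\V(\p)$. The only cosmetic point is that to conclude $\q\in\S_n^R(M)$ you should note the depth inequality at all generalizations $\q'\subseteq\q$, which follows since $\mcm_R(M)$ (resp.\ $\cm_R(M)$) is stable under generalization; the paper elides this at the same level of detail.
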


\begin{proof}
Suppose that $\mcm_R(M)$ is open.
Let $\p\in\supp_R(M)\cap\S_n^R(M)$ such that $\height\p<n$.
Then $\p$ belongs to $\mcm_R(M)$ since $\depth M_\p \ge {\rm inf}\{n, \height\p \}=\height\p$.
Therefore, $\mcm_R(M)\cap\V(\p)$ is a nonempty open subset of $\V(\p)$, and it is contained in $\S_n^R(M)$.
The assertion (1) follows from Theorem \ref{Sn open} (1).
The assertion (2) can be shown in a similar way.
\end{proof}

\begin{rmk}
In Section 5, we studied some rings over which the $\cm$ and $\mcm$-loci of all modules are open.
Corollary \ref{cm to Sn} says that $\S_n^R(M)$ and $\T_n^R(M)$ are open for any $R$-module $M$ if $R$ is such a ring.
\end{rmk}

Now we can prove the result below by applying \cite[Theorem 2.2]{Ta}.

\begin{thm}\label{Sn nc}
Let $M$ be an $R$-module, $\p$ a prime ideal of $R$, and let $n\ge 0$ be an integer.
Suppose that $\S_n(R/\p)$ contains a nonempty open subset of $\spec(R)$.
\begin{enumerate}[\rm(1)]
\item If $\p$ belongs to $\S_n^R(M)$ and $\height\p\le n$, then $\S_n^R(M)$ contains a nonempty open subset of $\V(\p)$.
\item If $\p$ belongs to $\T_n^R(M)$ and $\dim M_\p\le n$, then $\T_n^R(M)$ contains a nonempty open subset of $\V(\p)$.
\end{enumerate}
\end{thm}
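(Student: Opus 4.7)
My plan is to mimic the structure of Theorems \ref{gor} and \ref{cm}, using a regular-sequence reduction together with Lemma \ref{filtration}, and then to invoke \cite[Theorem 2.2]{Ta} to complete the argument.

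For part (1), if $M_\p=0$, Remark \ref{rmk2} immediately yields a nonempty open of $\V(\p)$ inside $\S_n^R(M)$. So assume $M_\p\ne 0$. The conditions $\p\in\S_n^R(M)$ and $\height\p\le n$ give $\depth M_\p\ge\height\p$, and combining with $\depth M_\p\le\dim M_\p\le\dim R_\p=\height\p=:h$ shows that $M_\p$ is a maximal Cohen--Macaulay $R_\p$-module of dimension $h$. Using Lemma \ref{base}(3), I can, after passing to $R_f$ for some $f\in R\setminus\p$, find a sequence $\bm{x}=x_1,\ldots,x_h$ in $\p$ that is $M$-regular; a prime-avoidance argument together with Lemma \ref{base}(4) further lets me assume that $\bm{x}$ is a system of parameters for $R_\p$ and that $\sqrt{\bm{x}R}=\p$ globally. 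The quotient $M_\p/\bm{x}M_\p$ then has finite length, so Lemma \ref{base}(2) allows the assumption $\p^r(M/\bm{x}M)=0$ for some $r\ge 1$, and Lemma \ref{base}(6) arranges that each $\p^{i-1}(M/\bm{x}M)/\p^i(M/\bm{x}M)$ is a free $R/\p$-module.

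Now Lemma \ref{filtration} yields $\depth(M/\bm{x}M)_\q=\depth(R/\p)_{\q/\p}$, hence $\depth M_\q=h+\depth(R/\p)_{\q/\p}$ by the $M$-regularity of $\bm{x}$, for every $\q\in\V(\p)$. This is precisely the input that \cite[Theorem 2.2]{Ta} requires to transfer the hypothesis---that $\S_n(R/\p)$ contains a nonempty open subset of $\spec(R/\p)$---into the conclusion that $\S_n^R(M)$ contains a nonempty open subset of $\V(\p)$.

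For part (2), I would mimic the proof of Theorem \ref{Sn open}(2): setting $I=\operatorname{Ann}_R(M)$ and $\bar R=R/I$, we have $\dim M_\p=\height(\p/I)$ and $\p\in\T_n^R(M)\Leftrightarrow\p/I\in\S_n^{\bar R}(M)$. The hypothesis $\dim M_\p\le n$ translates to $\height(\p/I)\le n$, so part (1) applied over $\bar R$ together with Lemma \ref{replace2} produces the desired open subset of $\V(\p)$ inside $\T_n^R(M)$. The main obstacle will be the careful invocation of \cite[Theorem 2.2]{Ta}: verifying that the depth formula $\depth M_\q=h+\depth(R/\p)_{\q/\p}$ on $\V(\p)$ really suffices to establish the $(\S_n)$-condition at $\q$, including at those primes $\q'\subseteq\q$ that are incomparable with $\p$ (such primes arise even in polynomial rings over a field). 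It is here that Takahashi's theorem supplies the essential technical leverage.
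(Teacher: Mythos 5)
Your setup for part (1) — reducing to $M_\p\ne 0$, noting $M_\p$ is maximal Cohen--Macaulay of dimension $h=\height\p$, cutting by an $M$-regular sequence $\bm{x}$ of length $h$ with $\sqrt{\bm{x}R}=\p$, and applying Lemma \ref{base}(6) and Lemma \ref{filtration} to get $\depth M_\q = h+\depth (R/\p)_{\q/\p}$ for $\q\in\V(\p)$ — matches the paper's reductions (i)--(iii) exactly, and part (2) is handled the same way in both. But the final step is a genuine gap, and you have in fact pointed at it yourself: to show that some $\D(f)\cap\V(\p)$ lies in $\S_n^R(M)$ you must verify $\depth M_\q\ge\inf\{n,\height\q\}$ for every prime $\q$ contained in some $\p'\in\V(\p)$, including the many $\q$ with $\q\not\supseteq\p$, where the depth formula says nothing. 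Deferring this to \cite[Theorem 2.2]{Ta} does not work as a citation: that theorem concerns the ring locus $\S_n(R)$ (Nagata's criterion for $(\S_n)$ of $R$ itself), and the depth formula on $\V(\p)$ does not reduce the module statement to it. The paper's phrase ``by applying \cite[Theorem 2.2]{Ta}'' means adapting its technique, and the proof it gives is self-contained at precisely this point.

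The missing argument is a dichotomy on $h(\q):=\height((\q+\p)/\p)$ versus $n$. If $h(\q)\le n$, choose $\q'$ minimal over $\q+\p$ with $\height(\q'/\p)=h(\q)$; then $(\S_n)$ of $R/\p$ forces $(R/\p)_{\q'/\p}$ to be Cohen--Macaulay, and the depth formula combined with Krull's height theorem ($\height\q'\le\height(\q'/\bm{x}R)+h$) gives $\depth M_{\q'}=\height\q'$, so $M_{\q'}$ is maximal Cohen--Macaulay and hence so is $M_\q$, giving $\depth M_\q\ge\height\q$. If $h(\q)>n$, then $(\S_n)$ of $R/\p$ together with \cite[Proposition 1.2.10(a)]{BH} yields $\operatorname{grade}((\q+\p)/\p,R/\p)\ge n$, so there is an $R/\p$-regular sequence $\bm{y}=y_1,\ldots,y_n$ inside $\q$; by Lemma \ref{filtration} it is $M/\bm{x}M$-regular, so $\bm{x},\bm{y}$ is $M_{\p'}$-regular and therefore $\bm{y}$ is $M_\q$-regular, giving $\depth M_\q\ge n$. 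Without this two-case analysis (or an equivalent substitute), your proof does not establish the $(\S_n)$ condition at the primes off $\V(\p)$, which is the entire content of the theorem beyond the routine reductions.
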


\begin{proof}
First of all, by Remark \ref{rmk2}, we may assume that $\p$ is in $\supp_R(M)$.
Also, we can freely replace our ring $R$ with its localization $R_f$ for an element $f\in R\setminus\p$ to prove the theorem; we apply Lemma \ref{replace}.

(1): Since $\p$ belongs to $\S_n^R(M)$ and $\height\p\le n$, we have $\depth M_\p \ge {\rm inf}\{n, \height\p \}=\height\p$.
Put $d=\height\p$.
Note that $M_\p\ne0$.
It follows from (3), (4), and (6) of Lemma \ref{base} that we may assume that the following three conditions are satisfied.
\begin{enumerate}[\rm(i)]
\item There exist a sequence $\bm{x}=x_1,\ldots,x_d$ in $\p$ such that $\bm{x}$ is an $M$-regular sequence, and $\p=\sqrt{\bm{x} R}$.
\item Set $\overline{R}=R/\bm{x} R$, $\overline{\p}=\p/\bm{x} R$, and $\overline{M}=M/\bm{x} M$.
Then there exists an integer $r>0$ such that $\overline{\p}^r =0$ and $\overline{\p}^{i}\overline{M}/\overline{\p}^{i+1}\overline{M}$ is a free $R/\p$-module for each $1\le i\le r$.
\item $R/\p$ satisfies $(\S_n)$.
\end{enumerate}
Now we claim that $\V(\p)$ is contained in $\S_n^R(M)$.
Suppose that $\p'\in\V(\p)$ and $\q\in\spec(R)$ with $\q\subseteq\p'$, then we prove that $\depth M_\q \ge {\rm inf}\{n, \height\q \}$.

First, we deal with the case $\height(\q+\p/\p)\le n$.
Since $\q+\p$ is contained in $\p'$, we can choose $\q'\in\V(\q+\p)$ such that $\height(\q'/\p)=\height(\q+\p/\p)$.
It follows from the above three conditions and Lemma \ref{filtration} that
$$
\depth M_{\q'}-d=\depth \overline{M}_{\q'}=\depth R_{\q'}/\p R_{\q'}\ge {\rm inf}\{n, \height(\q'/\p) \}=\height(\q'/\p)=\height(\q'/\bm{x} R)=\height\q'-d.
$$
This says that $M_{\q'}$ is maximal Cohen--Macaulay, and so is $M_\q$.
We obtain $\depth M_\q \ge\height\q\ge {\rm inf}\{n, \height\q \}$.

Next, we consider the case $\height(\q+\p/\p)> n$.
It follows from (iii) and \cite[Proposition 1.2.10 (a)]{BH} that $\operatorname{grade}(\q+\p/\p, R/\p)={\rm inf}\{\depth R_{\q'}/\p R_{\q'} \mid \q'/\p\in\V(\q+\p/\p)\}\ge n$.
Hence, there exists an $R/\p$-regular sequence $\bm{y}=y_1,\ldots,y_n$ in $\q$.
It is also an $\overline{M}$-regular sequence by (ii) and Lemma \ref{filtration}.
By (i), the sequence $\bm{x}, \bm{y}=x_1,\ldots,x_d, y_1,\ldots,y_n$ in $\p'$ is an $M_{\p'}$-regular sequence, and so is $\bm{y}$.
Therefore, $\bm{y}$ is an $M_\q$-regular sequence.
We obtain $\depth M_\q \ge n\ge {\rm inf}\{n, \height\q \}$.

(2): Put $I=\operatorname{Ann}_R (M)$, and $\overline{R}=R/I$.
We see that $\dim M_\q=\height(\q/I)$, and $\q$ is in $\T_n^R(M)$ if and only if $\q/I$ is in $\S_n^{\overline{R}}(M)$ for any $\q\in \V(I)$.
It follows from the assertion (1) and Lemma \ref{replace2} that $\T_n^R(M)$ contains a nonempty open subset of $\V(\p)$.
\end{proof}

The same result as Corollary \ref{cor cm} holds for Serre's condition $(\S_n)$.

\begin{cor}\label{cor of Sn}
Let $M$ be an $R$-module, and let $n\ge 0$ be an integer.
\begin{enumerate}[\rm(1)]
\item Suppose that $\S_n(R/\p)$ contains a nonempty open subset of $\spec (R/\p)$ for any $\p\in\S_n^R(M)\cap\supp_R(M)$ such that $\height\p<n$.
Then $\S_n^R(M)$ is open.
\item Suppose that $\S_n(R/\p)$ contains a nonempty open subset of $\spec (R/\p)$ for any $\p\in\T_n^R(M)\cap\supp_R(M)$ such that $\dim M_\p<n$.
Then $\T_n^R(M)$ is open.
\item If $\S_n(R/\p)$ contains a nonempty open subset of $\spec (R/\p)$ for all prime ideals $\p$ of $R$, then $\S_n^R(M)$ and $\T_n^R(M)$ are open.
\end{enumerate}
\end{cor}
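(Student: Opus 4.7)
The plan is to derive this corollary directly by combining Theorem \ref{Sn open} with Theorem \ref{Sn nc}: the latter will be used to verify the hypothesis of the former in each part.

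For part (1), I would let $\p \in \supp_R(M) \cap \S_n^R(M)$ with $\height\p < n$. The hypothesis of (1) then gives that $\S_n(R/\p)$ contains a nonempty open subset of $\spec(R/\p)$. Since $\height\p < n$ in particular implies $\height\p \le n$, I can apply Theorem \ref{Sn nc} (1) to conclude that $\S_n^R(M)$ contains a nonempty open subset of $\V(\p)$. This is precisely the condition needed to invoke Theorem \ref{Sn open} (1), which then yields openness of $\S_n^R(M)$.

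For part (2), the argument is entirely analogous: for $\p \in \supp_R(M) \cap \T_n^R(M)$ with $\dim M_\p < n$, the hypothesis supplies the required open subset in $\spec(R/\p)$, and since $\dim M_\p \le n$, Theorem \ref{Sn nc} (2) produces a nonempty open subset of $\V(\p)$ contained in $\T_n^R(M)$. Then Theorem \ref{Sn open} (2) gives the openness of $\T_n^R(M)$.

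Finally, part (3) is immediate from (1) and (2): its hypothesis (that $\S_n(R/\p)$ contains a nonempty open subset of $\spec(R/\p)$ for \emph{every} prime $\p$ of $R$) is strictly stronger than the hypotheses of both (1) and (2), so both $\S_n^R(M)$ and $\T_n^R(M)$ are open. There is no genuine obstacle here; the technical content is entirely carried by Theorems \ref{Sn open} and \ref{Sn nc}, and this corollary is just a clean packaging of their combination.
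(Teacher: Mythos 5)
Your proposal is correct and matches the paper's own argument exactly: the paper likewise deduces the corollary by using Theorem \ref{Sn nc} to verify, for each relevant prime, the hypothesis of Theorem \ref{Sn open}. Nothing further is needed.
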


\begin{proof}
The assertions follow from Theorems \ref{Sn open} and \ref{Sn nc}.
\end{proof}

Let $n\ge 0$ be an integer.
We give the following condition.
$$
(\S_n)^\dag : \S_n(R/\p)\ {\rm contains\ a\ nonempty\ open\ subset\ of\ } \spec (R/\p)\ {\rm for\ all\ prime\ ideals\ } \p \ {\rm of\ } R.
$$

\begin{prop}\label{stable NC cm}
Let $m, n\ge 0$ be integers.
Suppose that $(\S_n)^\dag$ holds for $R$.
\begin{enumerate}[\rm(1)]
\item If $S$ is either a homomorphic image of $R$ or a localization of $R$, then $(\S_n)^\dag$ also holds for $S$.
\item If $S=R[X_1,\ldots,X_m]$ is a polynomial ring over $R$, then $(\S_{n-m})^\dag$ holds for $S$.
\end{enumerate}
\end{prop}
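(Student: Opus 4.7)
The plan is to follow the strategy of Lemmas \ref{stable NC gor} and \ref{stable NC cm}, adapted to the property $(\S_n)$. For Part (1), I would argue as in Lemma \ref{stable NC gor}(1), (2): in the homomorphic image case $S = R/I$, any prime $\q$ of $S$ has the form $\p/I$ with $\p \supseteq I$, and the isomorphism $S/\q \simeq R/\p$ transfers the $(\S_n)$-locus condition from the hypothesis directly; in the localization case $S = R_W$, primes of $S$ correspond to primes $\p$ of $R$ disjoint from $W$ and $\spec(R_W/\p R_W)$ is naturally homeomorphic to the subspace of $\spec(R/\p)$ consisting of primes disjoint from (the image of) $W$, so Lemma \ref{base}(1) shows the nonempty open subset furnished by the hypothesis meets this subspace in a nonempty open subset, which transfers to a nonempty open subset of $\spec(R_W/\p R_W)$ lying in $\S_n(R_W/\p R_W)$.

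For Part (2), I would induct on $m$, treating one variable at a time, so that it suffices to handle $m = 1$. Given a prime $\q$ of $S = R[X]$ with $\p = \q \cap R$ and $\bar{\q} = \q/\p S$, I would write $S/\q \simeq (R/\p)[X]/\bar{\q}$ with $\bar{\q} \cap (R/\p) = 0$ and invoke $(\S_n)^\dag$ for $R$ to choose $f \in R \setminus \p$ such that $(R/\p)_f$ satisfies $(\S_n)$. Then I would split on whether $\bar{\q} = 0$. If $\bar{\q} = 0$, then $(S/\q)_f = (R/\p)_f[X]$, and $(\S_n)$ is inherited along the flat polynomial extension via the depth-and-dimension formulas for flat maps combined with the elementary inequality $\min\{n,a\}+\min\{n,b\} \ge \min\{n,a+b\}$. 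If $\bar{\q} \ne 0$, I would pick a nonzero $h \in \bar{\q}$ and enlarge $f$ so that the leading coefficient of $h$ is a unit in $(R/\p)_f$; then $(R/\p)_f[X]/(h)$ is finite free over $(R/\p)_f$, making $(S/\q)_f$ module-finite, and generic freeness (Lemma \ref{base}(5)) allows a further localization of $f$ making $(S/\q)_f$ a finite free $(R/\p)_f$-module. The resulting finite flat extension has $0$-dimensional fibers, so for any prime $Q$ of $(S/\q)_f$ lying over $\p'$ of $(R/\p)_f$ one obtains $\depth(S/\q)_{f,Q} = \depth(R/\p)_{f,\p'}$ and $\height Q = \height\p'$, and these equalities transfer $(\S_n)$ from $(R/\p)_f$ to $(S/\q)_f$. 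In either case, $\D(f) \cap \spec(S/\q)$ is a nonempty open subset of $\spec(S/\q)$ contained in $\S_n(S/\q) \subseteq \S_{n-1}(S/\q)$, completing the inductive step.

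The main technical obstacle will be case (b) of the $m = 1$ argument: arranging the correct chain of localizations (first at the leading coefficient of $h$, then via generic freeness) so that $(S/\q)_f$ becomes finite free over $(R/\p)_f$, and then verifying the depth and dimension transfer formulas for the resulting finite flat extension via its $0$-dimensional fiber property.
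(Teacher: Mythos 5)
Your Part (1) follows the same route as the paper, which simply defers to the proof of Lemma \ref{stable NC gor}. Your Part (2) is correct but takes a genuinely different path in the essential case, and in fact proves more than the proposition asserts. The paper also reduces to $m=1$ and splits on whether $\height(\q/\p S)$ is $0$ or $1$ (equivalently, whether $\q=\p S$); in the height-one case it picks $x\in\q\setminus\p S$, uses Lemma \ref{base} (4), (6) and Lemma \ref{filtration} to identify $\depth(S/\q)_{\q'}$ with $\depth(S/(xS+\p S))_{\q'}=\depth(S/\p S)_{\q'}-1$ on a neighbourhood, and the resulting estimate $\inf\{n,\height(\q'/\p S)\}-1=\inf\{n-1,\height(\q'/\q)\}$ only yields $(\S_{n-1})$; this loss of one per variable is exactly where the drop from $n$ to $n-m$ in the statement comes from. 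Your argument instead realizes $(S/\q)_f$ as a finite free extension of $(R/\p)_f$ (first localize at the leading coefficient of $h$ to get module-finiteness, then apply generic freeness), and the zero-dimensionality of the fibres gives $\depth(S/\q)_Q=\depth(R/\p)_{\p'}$ and $\height Q=\height\p'$ exactly, so $(\S_n)$ itself transfers. I checked the delicate points: the localization at the leading coefficient is precisely what removes the primes where the fibre dimension jumps and finiteness fails (e.g.\ $\q=(sX-t)$ in $k[s,t][X]$ at the prime $(s,t,X)$), the injection $(R/\p)_f\hookrightarrow(S/\q)_f$ guarantees the open set is nonempty, and the flat local depth and dimension formulas you invoke are standard. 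So your proof is valid, more uniform than the paper's, and actually shows the sharper statement that $(\S_n)^\dag$ itself, hence a fortiori $(\S_{n-m})^\dag$, passes to $R[X_1,\ldots,X_m]$.
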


\begin{proof}
(1): The assertion can be shown analogously as in the proof of Lemma \ref{stable NC gor}.

(2): We may assume $m=1$, and $X_1=X$.
Let $\q$ be a prime ideal of $S$, and $\p=\q\cap R$.
By \cite[Theorem 15.5]{Mat}, we have $\height(\q/\p S)\le 1$.
We prove that $\S_{n-1}(S/\q)$ contains a nonempty open subset of $\spec(S/\q)$.

First, we consider the case $\height(\q/\p S)=0$.
There exists $f\in R\setminus\p$ such that $(R/\p)_f$ satisfies $(\S_n)$ because $\S_n(R/\p)$ contains a nonempty open subset of $\spec (R/\p)$.
Then $(S/\q)_f\simeq (R/\p)_f[X]$ is a polynomial ring over $(R/\p)_f$.
It follows from \cite[Proposition 2.1.16]{BH} that $(S/\q)_f$ satisfies $(\S_n)$.
This says that $\S_n(S/\q)$ contains a nonempty open subset of $\spec(S/\q)$.

Next, we handle the case $\height(\q/\p S)=1$.
We can take an element $x$ of $\q\setminus\p S$.
It follows from (4) and (6) of Lemma \ref{base}, and Lemma \ref{filtration} that we can choose $g\in S\setminus\q$ such that $\depth (S/\q)_{\q'}=\depth (S/xS+\p S)_{\q'}$ for any $\q'\in\V(\q)\cap\D(g)$.
On the other hand, there exists $f\in R\setminus\p$ such that $(S/\p S)_f$ satisfies $(\S_n)$; see the above case.
For any prime ideal $\q'$ of $S$ which belongs to $\V(\q)\cap\D(g)\cap\D(f)$,
we obtain
$$
\depth (S/\q)_{\q'}=\depth (S/\p S)_{\q'}-1
\ge {\rm inf}\{n, \height(\q'/\p S) \}-1={\rm inf}\{n-1, \height(\q'/\q) \}.
$$
This says that $(S/\q)_{fg}$ satisfies $(\S_{n-1})$.
Thus, $\S_{n-1}(S/\q)$ contains a nonempty open subset of $\spec(S/\q)$ since $fg$ is in $S\setminus\q$.
\end{proof}

\section{Nagata criterion for module properties}

In this section, we prove that the statement (NC)$^{\ast}$, which was defined in Section 1, holds for the finite injective dimension property, the Gorenstein property, the Cohen--Macaulay property, the maximal Cohen--Macaulay property, and Serre's conditions $(\S_n)$ and $(\T_n)$. 

\begin{lem}\label{mod NC gor}
Let $M$ be an $R$-module, and let $\p\in\supp_R(M)$. 
The following are equivalent.
\begin{enumerate}[\rm(1)]
\item $\gor(R/\p)$ contains a nonempty open subset of $\spec (R/\p)$.
\item $\fid_{R/\p}(M/\p M)$ contains a nonempty open subset of $\spec (R/\p)$.
\item $\gor_{R/\p}(M/\p M)$ contains a nonempty open subset of $\spec (R/\p)$.
\end{enumerate}
\end{lem}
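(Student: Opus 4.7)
The plan is to exploit the fact that, after a suitable localization, $M/\p M$ becomes a free $(R/\p)$-module of positive rank, so that its finite injective dimension and Gorenstein loci agree with the Gorenstein locus of the ring $R/\p$ itself on an open neighbourhood of the generic point.

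First I would apply Lemma \ref{base}(5) to the integral domain $R/\p$ and the nonzero $R/\p$-module $M/\p M$ (nonzero because $\p\in\supp_R(M)$) to obtain $f\in R\setminus\p$ and an integer $n\ge 1$ such that $(M/\p M)_f\cong ((R/\p)_f)^{\oplus n}$. Consequently, for every $\q/\p\in\D(f)\subseteq\spec(R/\p)$ one has
\[
(M/\p M)_{\q/\p}\cong ((R/\p)_{\q/\p})^{\oplus n}.
\]
Here $n\ge 1$ is crucial: a nonzero finite free module $S^{\oplus n}$ over a local ring $S$ is Gorenstein (as an $S$-module) if and only if $S$ is a Gorenstein ring, and analogously has finite injective dimension if and only if $\id_S S<\infty$, i.e.\ if and only if $S$ is Gorenstein. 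This follows from $\Ext^i_S(k,S^{\oplus n})\cong\Ext^i_S(k,S)^{\oplus n}$ together with \cite[Theorem 3.1.17]{BH}. In particular,
\[
\gor(R/\p)\cap\D(f)\;=\;\gor_{R/\p}(M/\p M)\cap\D(f)\;=\;\fid_{R/\p}(M/\p M)\cap\D(f).
\]

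Next I would use Lemma \ref{base}(1): the zero ideal of $R/\p$ lies in every nonempty open subset of $\spec(R/\p)$, and since $f\notin\p$ it also lies in $\D(f)$. Hence, for any of the three loci $\X\in\{\gor(R/\p),\,\gor_{R/\p}(M/\p M),\,\fid_{R/\p}(M/\p M)\}$, $\X$ contains a nonempty open subset of $\spec(R/\p)$ if and only if $\X\cap\D(f)$ does. The displayed equalities above immediately yield (1)$\Leftrightarrow$(2)$\Leftrightarrow$(3) among the three conditions restricted to $\D(f)$, and therefore among the original conditions.

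No step is really an obstacle here; the only subtlety is making sure that the free rank $n$ obtained from Lemma \ref{base}(5) is strictly positive (so that free module and ring have the same Gorenstein/finite injective dimension behaviour), which is guaranteed by the hypothesis $\p\in\supp_R(M)$. The implication (3)$\Rightarrow$(2) is also immediate from Remark \ref{rmk1}, which gives $\gor_{R/\p}(M/\p M)\subseteq\fid_{R/\p}(M/\p M)$, and can serve as a sanity check on the argument.
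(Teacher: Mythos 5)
Your proof is correct and follows essentially the same route as the paper: the paper intersects the three loci with the (nonempty, open) free locus $\free_{R/\p}(M/\p M)$ via \cite[Theorem 4.10 (ii)]{Mat}, whereas you intersect with a basic open $\D(f)$ inside it obtained from Lemma \ref{base}(5), and both arguments conclude with Lemma \ref{base}(1) and the observation that a free module of positive rank is Gorenstein (resp.\ of finite injective dimension) exactly when the base ring is Gorenstein.
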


\begin{proof}
We obtain $\supp_R (M/\p M)=\supp_R(M)\cap V(\p)=V(\p)$, and thus $\supp_{R/\p} (M/\p M)=\spec (R/\p)$.
Hence, we have
\begin{align}\label{equal in free}
\gor(R/\p)\cap\free_{R/\p}(M/\p M)&=\fid_{R/\p} (M/\p M)\cap\free_{R/\p}(M/\p M) \\
&=\gor_{R/\p} (M/\p M)\cap\free_{R/\p}(M/\p M). \notag
\end{align}
By \cite[Theorem 4.10 (ii)]{Mat}, $\free_{R/\p}(M/\p M)$ is a nonempty open subset of $\spec (R/\p)$.
The equivalence follows from Lemma \ref{base} (1) and (\ref{equal in free}).
\end{proof}

The lemmas below can be shown along the same lines as in the proof of Lemma \ref{mod NC gor}.

\begin{lem}\label{mod NC cm}
Let $M$ be an $R$-module, and let $\p\in\supp_R(M)$. 
The following are equivalent.
\begin{enumerate}[\rm(1)]
\item $\cm(R/\p)$ contains a nonempty open subset of $\spec (R/\p)$.
\item $\cm_{R/\p}(M/\p M)$ contains a nonempty open subset of $\spec (R/\p)$.
\item $\mcm_{R/\p}(M/\p M)$ contains a nonempty open subset of $\spec (R/\p)$.
\end{enumerate}
\end{lem}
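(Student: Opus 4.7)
The plan is to follow essentially the same template as the proof of Lemma \ref{mod NC gor}, with Cohen--Macaulayness replacing Gorensteinness and finite injective dimension, and with the key observation that on the free locus, being Cohen--Macaulay as a module (maximal or not) coincides with being Cohen--Macaulay as a ring.

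First I would record the support computation already used for Lemma \ref{mod NC gor}: since $\p \in \supp_R(M)$, we have $\supp_R(M/\p M) = \supp_R(M) \cap \V(\p) = \V(\p)$, hence $\supp_{R/\p}(M/\p M) = \spec(R/\p)$. In particular, for every $\q/\p \in \spec(R/\p)$ the localization $(M/\p M)_{\q/\p}$ is nonzero, so whenever $\q/\p$ additionally lies in $\free_{R/\p}(M/\p M)$ the module $(M/\p M)_{\q/\p}$ is a nonzero free $(R/\p)_{\q/\p}$-module. For such a module $N \cong (R/\p)_{\q/\p}^{\oplus n}$ with $n > 0$ we have $\dim N = \dim (R/\p)_{\q/\p} = \depth N / \ldots$; more precisely, $\depth N = \depth (R/\p)_{\q/\p}$ and $\dim N = \dim (R/\p)_{\q/\p}$, so all three conditions ``$(R/\p)_{\q/\p}$ is Cohen--Macaulay'', ``$N$ is Cohen--Macaulay'', and ``$N$ is maximal Cohen--Macaulay'' coincide. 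This yields the chain of equalities
\begin{align*}
\cm(R/\p)\cap\free_{R/\p}(M/\p M)
&= \cm_{R/\p}(M/\p M)\cap\free_{R/\p}(M/\p M) \\
&= \mcm_{R/\p}(M/\p M)\cap\free_{R/\p}(M/\p M),
\end{align*}
which is the analogue of equation (\ref{equal in free}).

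The second step is to invoke \cite[Theorem 4.10 (ii)]{Mat} to see that $\free_{R/\p}(M/\p M)$ is an open subset of $\spec(R/\p)$, and to note that it is nonempty: the generic point of the domain $R/\p$ lies in it, because $M/\p M$ localizes to a vector space over the field $(R/\p)_{(0)}$. Combining this with Lemma \ref{base} (1), any nonempty open subset $U$ of $\spec(R/\p)$ must contain the generic point and hence $U \cap \free_{R/\p}(M/\p M)$ is again a nonempty open subset of $\spec(R/\p)$. The three equivalences (1) $\Leftrightarrow$ (2) $\Leftrightarrow$ (3) then follow formally from the displayed equalities: a nonempty open subset inside any one of the loci can be intersected with the free locus and moved to any of the other loci.

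There is no genuine obstacle in the argument; the only care needed is in verifying the equality of depths and dimensions for a nonzero free module over a local ring (so that the three Cohen--Macaulay properties truly agree on the free locus), and in checking that the free locus is nonempty, which uses precisely the hypothesis $\p \in \supp_R(M)$ to pass from $\supp_R(M/\p M) = \V(\p)$ to nonvanishing at the generic point. Everything else is a line-by-line parallel of the Gorenstein case.
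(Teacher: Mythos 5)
Your proposal is correct and is exactly the argument the paper intends: the paper proves Lemma \ref{mod NC cm} ``along the same lines as'' Lemma \ref{mod NC gor}, namely by noting $\supp_{R/\p}(M/\p M)=\spec(R/\p)$, that on the nonempty open locus $\free_{R/\p}(M/\p M)$ the three conditions coincide (since a nonzero free module over a local ring has the same depth and dimension as the ring), and then intersecting with the free locus via Lemma \ref{base}~(1). Your verification that Cohen--Macaulay, maximal Cohen--Macaulay, and the ring being Cohen--Macaulay all agree for a nonzero free module is precisely the needed substitute for the Gorenstein-case identities.
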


\begin{lem}\label{mod NC Sn}
Let $M$ be an $R$-module, and let $\p\in\supp_R(M)$. 
The following are equivalent.
\begin{enumerate}[\rm(1)]
\item $\S_n(R/\p)$ contains a nonempty open subset of $\spec (R/\p)$.
\item $\S_n^{R/\p}(M/\p M)$ contains a nonempty open subset of $\spec (R/\p)$.
\item $\T_n^{R/\p}(M/\p M)$ contains a nonempty open subset of $\spec (R/\p)$.
\end{enumerate}
\end{lem}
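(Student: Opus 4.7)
The plan is to mimic the proof of Lemma \ref{mod NC gor} almost verbatim, the central idea being that on the free locus $U := \free_{R/\p}(M/\p M)$ the module $M/\p M$ is locally a direct sum of copies of $R/\p$ of positive constant rank, whence the two Serre conditions on $M/\p M$ coincide with the $(\S_n)$-condition on $R/\p$.

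First I would record the preparatory facts. Since $\p \in \supp_R(M)$, we have $\supp_R(M/\p M) = \supp_R(M) \cap \V(\p) = \V(\p)$, hence $\supp_{R/\p}(M/\p M) = \spec(R/\p)$; in particular $M/\p M$ is nonzero. Because $R/\p$ is a domain, \cite[Theorem 4.10(ii)]{Mat} gives that $U$ is a nonempty open subset of $\spec(R/\p)$, and $U$ is stable under generalization since localization preserves freeness.

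Next I would establish the identity
$$
\S_n(R/\p) \cap U \;=\; \S_n^{R/\p}(M/\p M) \cap U \;=\; \T_n^{R/\p}(M/\p M) \cap U.
$$
For this, fix $\q/\p \in U$ and any $\mathfrak{Q}/\p \subseteq \q/\p$. Stability of $U$ under generalization yields $\mathfrak{Q}/\p \in U$, so $(M/\p M)_{\mathfrak{Q}/\p}$ is free of a constant positive rank over $(R/\p)_{\mathfrak{Q}/\p}$ (constant by irreducibility of $\spec(R/\p)$, positive because $\supp_{R/\p}(M/\p M) = \spec(R/\p)$). Consequently $\depth(M/\p M)_{\mathfrak{Q}/\p} = \depth(R/\p)_{\mathfrak{Q}/\p}$ and $\dim(M/\p M)_{\mathfrak{Q}/\p} = \height(\mathfrak{Q}/\p)$, so both the $(\S_n)$- and $(\T_n)$-conditions on $(M/\p M)_{\q/\p}$ reduce to the $(\S_n)$-condition on $(R/\p)_{\q/\p}$.

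Finally, the equivalence of (1)--(3) follows formally, as in Lemma \ref{mod NC gor}: if one of the three loci contains a nonempty open subset $V$ of $\spec(R/\p)$, then by Lemma \ref{base}(1) the zero ideal of $R/\p$ lies in both $V$ and $U$, so $V \cap U$ is a nonempty open subset of $\spec(R/\p)$ contained simultaneously in all three loci by the displayed identity. The only genuinely new verification is this identity; the main (minor) obstacle is to observe that the free locus is closed under generalization, so that the identity extends over \emph{all} primes below a given $\q/\p$ and not just the prime itself, which is precisely what allows the Serre conditions (quantified over all specializations) to agree on $U$.
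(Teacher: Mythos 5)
Your proof is correct and follows exactly the route the paper intends: the paper proves this lemma by the same free-locus argument as Lemma \ref{mod NC gor}, intersecting each locus with the nonempty open set $\free_{R/\p}(M/\p M)$, on which all three conditions coincide, and concluding via Lemma \ref{base}(1). Your additional observation that the free locus is stable under generalization (needed because $(\S_n)$ and $(\T_n)$ quantify over all primes below a given one, unlike the $\fid$/$\gor$ case) is a correct and worthwhile refinement of the same argument.
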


\begin{rmk}
Note that the conditions (2) and (3) of the above lemmas always hold when $\p$ is not in $\supp_R(M)$; see Remark \ref{rmk2}.
\end{rmk}

The same result as Theorem \ref{gor} holds for the Gorenstein property.

\begin{prop}\label{gor2}
Let $M$ be an $R$-module, and $\p\in\supp_R(M)\cap\gor_R(M)$.
The following are equivalent.
\begin{enumerate}[\rm(1)]
\item $\gor_R(M)$ contains a nonempty open subset of $\V(\p)$.
\item $\gor(R/\p)$ contains a nonempty open subset of $\spec (R/\p)$.
\end{enumerate} 
\end{prop}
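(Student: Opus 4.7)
The plan is to deduce the proposition directly from the results already established for $\fid$ and $\cm$, using the identity $\gor_R(M)=\fid_R(M)\cap\mcm_R(M)$ from Remark \ref{rmk1} to reduce the Gorenstein case to the two previously-handled cases.

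For the implication $(1)\Rightarrow(2)$, I would observe that since $\gor_R(M)\subseteq\fid_R(M)$, any nonempty open subset of $\V(\p)$ contained in $\gor_R(M)$ is automatically contained in $\fid_R(M)$. Because $\p\in\gor_R(M)=\fid_R(M)\cap\mcm_R(M)$, the hypothesis $\p\in\supp_R(M)\cap\fid_R(M)\cap\mcm_R(M)$ of Theorem \ref{gor} is satisfied, so its equivalence $(1)\Leftrightarrow(2)$ for the $\fid$-locus yields precisely statement (2).

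For the reverse implication $(2)\Rightarrow(1)$, I would proceed as follows. Assume $\gor(R/\p)$ contains a nonempty open subset of $\spec(R/\p)$. First, since $\p\in\fid_R(M)$, Theorem \ref{thmA} produces a nonempty open subset $U_1\subseteq\V(\p)$ contained in $\fid_R(M)$. Second, since a Gorenstein local ring is Cohen--Macaulay, we have $\gor(R/\p)\subseteq\cm(R/\p)$, so $\cm(R/\p)$ also contains a nonempty open subset of $\spec(R/\p)$; because $\p\in\mcm_R(M)\subseteq\cm_R(M)\cap\supp_R(M)$, the equivalence $(2)\Leftrightarrow(3)$ of Theorem \ref{cm} yields a nonempty open subset $U_2\subseteq\V(\p)$ contained in $\mcm_R(M)$.

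The final step is to intersect $U_1$ and $U_2$. By Lemma \ref{base}(1), the prime $\p$ belongs to every nonempty open subset of $\V(\p)$, so $\p\in U_1\cap U_2$ and hence $U_1\cap U_2$ is a nonempty open subset of $\V(\p)$ contained in $\fid_R(M)\cap\mcm_R(M)=\gor_R(M)$, giving (1). The argument is essentially a bookkeeping exercise combining Theorems \ref{thmA}, \ref{gor}, \ref{cm} and Remark \ref{rmk1}; the only subtle point, and the one to be careful about, is ensuring that the two separately-produced open neighborhoods of $\V(\p)$ actually have a common point, which is exactly what Lemma \ref{base}(1) provides.
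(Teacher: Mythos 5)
Your proof is correct and follows essentially the same route as the paper: $(1)\Rightarrow(2)$ via Theorem \ref{gor} applied to $\fid_R(M)\supseteq\gor_R(M)$, and $(2)\Rightarrow(1)$ by producing open neighborhoods in $\fid_R(M)$ and (via $\gor(R/\p)\subseteq\cm(R/\p)$ and Theorem \ref{cm}) in $\mcm_R(M)$, then intersecting them using Lemma \ref{base}(1). The only cosmetic differences are that you cite Theorem \ref{thmA} where the paper cites Theorem \ref{gor} for the $\fid$ step (both suffice), and your aside ``$\mcm_R(M)\subseteq\cm_R(M)\cap\supp_R(M)$'' is not a true set inclusion, though the needed membership $\p\in\supp_R(M)\cap\cm_R(M)\cap\mcm_R(M)$ follows anyway from the hypotheses.
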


\begin{proof}
(1) $\Rightarrow$ (2): Since $\gor_R(M)$ is contained in $\fid_R(M)$,
$\fid_R(M)$ contains a nonempty open subset of $\V(\p)$.
Theorem \ref{gor} yields that the condition (2) is satisfied.

(2) $\Rightarrow$ (1): Theorem \ref{gor} implies that $\fid_R(M)$ contains a nonempty open subset $U$ of $\V(\p)$.
On the other hand, $\cm(R/\p)$ contains a nonempty open subset of $\spec (R/\p)$ because $\cm(R/\p)$ contains $\gor(R/\p)$.
It follows from Theorem \ref{cm} that there exists a nonempty open subset $V$ of $\V(\p)$, which is contained in $\mcm_R(M)$.
Now $\p$ belongs to $U\cap V$ by Lemma \ref{base} (1).
Thus $U\cap V$ is a nonempty open subset of $\V(\p)$, and it is contained in $\fid_R(M)\cap\mcm_R(M)=\gor_R(M)$.
\end{proof}

\begin{cor}\label{corB}
\begin{enumerate}[\rm(1)]
\item Let $M$ be an $R$-module.
Suppose that $\gor(R/\p)$ contains a nonempty open subset of $\spec (R/\p)$ for any $\p\in\gor_R (M)\cap\supp_R(M)$.
Then $\gor_{R} (M)$ is an open subset of $\spec (R)$.
\item Suppose that $\gor(R/\p)$ contains a nonempty open subset of $\spec (R/\p)$ for all prime ideals $\p$ of $R$.
Then $\gor_{R} (M)$ is an open subset of $\spec (R)$ for any $R$-module $M$. 
\end{enumerate}
\end{cor}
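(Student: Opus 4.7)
The plan is to mimic the structure of the proof of Corollary \ref{corA}, substituting Proposition \ref{gor2} for Theorem \ref{thmA}.

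For (1), we verify the two hypotheses of the topological Nagata criterion (Lemma \ref{Mat24.2}) for the set $\gor_R(M)$. First, $\gor_R(M)=\fid_R(M)\cap\mcm_R(M)$ by Remark \ref{rmk1}, and each of the two intersecting loci is stable under generalization; hence $\gor_R(M)$ satisfies condition (1) of Lemma \ref{Mat24.2}. For condition (2), fix $\p\in\gor_R(M)$. We split into two cases. If $\p\notin\supp_R(M)$, then by Remark \ref{rmk2} the open set $\V(\p)\cap(\spec R\setminus\supp_R(M))$ is a nonempty open subset of $\V(\p)$ contained in $\gor_R(M)$. If instead $\p\in\supp_R(M)$, then $\p\in\gor_R(M)\cap\supp_R(M)$, so by hypothesis $\gor(R/\p)$ contains a nonempty open subset of $\spec(R/\p)$, and Proposition \ref{gor2} then produces a nonempty open subset of $\V(\p)$ contained in $\gor_R(M)$. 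Thus both conditions of Lemma \ref{Mat24.2} are satisfied, and $\gor_R(M)$ is open.

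For (2), the assumption applies to every prime ideal of $R$, so in particular to every element of $\gor_R(M)\cap\supp_R(M)$; hence (1) applies and gives the conclusion.

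I do not anticipate a genuine obstacle here: the key input, Proposition \ref{gor2}, has already been established, and everything else is the same bookkeeping used in the proof of Corollary \ref{corA}. The only point to be careful about is ensuring the case $\p\notin\supp_R(M)$ is handled separately, since Proposition \ref{gor2} is stated under the assumption $\p\in\supp_R(M)$; this is resolved cleanly via Remark \ref{rmk2}.
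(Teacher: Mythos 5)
Your proof is correct and is exactly the argument the paper intends: the paper's proof of this corollary is the single line ``It can be shown in a similar way as in the proof of Corollary \ref{corA},'' i.e., apply the topological Nagata criterion with Proposition \ref{gor2} replacing Theorem \ref{thmA} and Remark \ref{rmk2} handling primes outside the support. Your explicit case split on $\p\in\supp_R(M)$ is precisely the bookkeeping the paper leaves implicit.
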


\begin{proof}
It can be shown in a similar way as in the proof of Corollary \ref{corA}.
\end{proof}

Proposition \ref{gor2} can also be shown analogously as in the proof of Theorem \ref{gor}. 
In particular, Leuschke \cite{L} proved the same result as Corollary \ref{corB} using the same methods as in the proof of Theorem \ref{gor}.

From the above lemmas and corollaries, we can prove the main result of this section.

\begin{thm}\label{(NC)*}
Let $n\ge 0$ be an integer.
Then {\rm(NC)$^{\ast}$} holds for each $\P\in\{\fid, \gor, \cm, \mcm, (\S_n), (\T_n)\}$.
\end{thm}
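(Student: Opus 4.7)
The plan is to reduce each of the six cases to one of the openness corollaries already established, using the bridging Lemmas \ref{mod NC gor}, \ref{mod NC cm}, and \ref{mod NC Sn} to translate the module-level hypothesis of {\rm(NC)$^\ast$} into the ring-level hypothesis demanded by those corollaries.

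Fix $\P\in\{\fid,\gor,\cm,\mcm,(\S_n),(\T_n)\}$ and an $R$-module $M$, and assume that $\P_{R/\p}(M/\p M)$ contains a nonempty open subset of $\spec(R/\p)$ for every $\p\in\supp_R(M)$. For each such $\p$, I would apply the matching translation lemma: Lemma \ref{mod NC gor} when $\P\in\{\fid,\gor\}$, Lemma \ref{mod NC cm} when $\P\in\{\cm,\mcm\}$, and Lemma \ref{mod NC Sn} when $\P\in\{(\S_n),(\T_n)\}$. In every case, the equivalences supplied by these lemmas convert the assumption on $M/\p M$ into the statement that the corresponding ring locus, namely $\gor(R/\p)$, $\cm(R/\p)$, or $\S_n(R/\p)$, respectively, contains a nonempty open subset of $\spec(R/\p)$.

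With this ring-level condition available for every $\p\in\supp_R(M)$, and in particular for every $\p$ in the smaller set $\P_R(M)\cap\supp_R(M)$, I would then invoke the appropriate openness corollary: Corollary \ref{corA}(1) when $\P=\fid$, Corollary \ref{corB}(1) when $\P=\gor$, Corollary \ref{cor cm}(1) when $\P=\cm$, Corollary \ref{cor cm}(2) when $\P=\mcm$, Corollary \ref{cor of Sn}(1) when $\P=(\S_n)$, and Corollary \ref{cor of Sn}(2) when $\P=(\T_n)$. Each of these directly yields that $\P_R(M)$ is an open subset of $\spec(R)$.

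There is essentially no obstacle here: all the real content has already been packaged into the translation lemmas of this section together with the openness corollaries of Sections 3--6. The only minor point to check is the side condition in Corollary \ref{cor of Sn} (namely $\height\p<n$ in part (1) and $\dim M_\p<n$ in part (2)), but since the derived ring-level condition is available for \emph{every} $\p\in\supp_R(M)$, it applies in particular under those restrictions, so the corollary fires without modification. For $\p\notin\supp_R(M)$ nothing further is needed, by Remark \ref{rmk2}.
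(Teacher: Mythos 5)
Your proposal is correct and is essentially identical to the paper's own proof: both reduce each case to the matching translation lemma (Lemma \ref{mod NC gor}, \ref{mod NC cm}, or \ref{mod NC Sn}) followed by the corresponding openness corollary (Corollary \ref{corA}, \ref{corB}, \ref{cor cm}, or \ref{cor of Sn}). The extra care you take with the restriction to $\P_R(M)\cap\supp_R(M)$ and the side conditions in Corollary \ref{cor of Sn} is sound and matches the intended argument.
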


\begin{proof}
The asssertion follows from Lemma \ref{mod NC gor} and Corollary \ref{corA} for $\P=\fid$, from Lemma \ref{mod NC gor} and Corollary \ref{corB} for $\P=\gor$, from Lemma \ref{mod NC cm} and Corollary \ref{cor cm} for $\P\in\{\cm, \mcm\}$, and from Lemma \ref{mod NC Sn} and Corollary \ref{cor of Sn} for $\P\in\{(\S_n), (\T_n)\}$.
\end{proof}

\begin{ac}
The author would like to thank his supervisor Ryo Takahashi for valuable comments.
\end{ac}


\begin{thebibliography}{99}
\bibitem{BS}
{\sc M. P. Brodmann; R. Y. Sharp}, Local cohomology. An algebraic introduction with geometric applications. Second edition. Cambridge Studies in Advanced Mathematics, {\bf 136}, {\it Cambridge University Press, Cambridge}, 2013.
\bibitem{BH}
{\sc W. Bruns; J. Herzog}, Cohen--Macaulay rings, revised edition, Cambridge Studies in Advanced Mathematics {\bf 39}, {\it Cambridge University Press, Cambridge}, 1998.
\bibitem{C}
{\sc L. W. Christensen}, Gorenstein Dimensions, Lecture Notes in Mathematics, vol. 1747, {\em Springer-Verlag, Berlin}, 2000.
\bibitem{DJ}
{\sc M. T. Dibaei; R. Jafari}, Cohen--Macaulay loci of modules. {\em Comm. Algebra} {\bf 39} (2011), no. 10, 3681--3697.
\bibitem{FT}
{\sc H.-B. Foxby and A. Thorup}, Minimal injective resolutions under flat base change, {\em Proc. Amer. Math. Soc.} {\bf 67} (1977), no. 1, 27--31.
\bibitem{GM}
{\sc S. Greco; M. G. Marinari}, Nagata's criterion and openness of loci for Gorenstein and complete intersection, {\em Math. Z.} {\bf 160} (1978), no. 3, 207--216.
\bibitem{G}
{\sc A. Grothendieck}, \'{E}l\'{e}ments de g\'{e}om\'{e}trie alg\'{e}brique. IV. \'{E}tude locale des sch\'{e}mas et des morphismes de sch\'{e}mas. II, {\em Inst. Hautes \'{E}tudes Sci. Publ. Math.} {\bf 24}, (1965).
\bibitem{H}
{\sc H. Holm}, Rings with finite Gorenstein injective dimension, {\em Proc. Amer. Math. Soc.} {\bf 132} (2004), no. 5, 1279--1283.
\bibitem{L}
{\sc G. J. Leuschke}, Gorenstein modules, finite index, and finite Cohen--Macaulay type, {\em Comm. Algebra} {\bf 30} (2002), no. 4, 2023--2035.
\bibitem{Mat}
{\sc H. Matsumura}, Commutative ring theory, Translated from the Japanese by M. Reid, Second edition, Cambridge Studies in Advanced Mathematics {\bf 8}, {\em Cambridge University Press, Cambridge}, 1989.
\bibitem{N}
{\sc M. Nagata}, On the closedness of singular loci, {\em Inst. Hautes \'{E}tudes Sci. Publ. Math.} {\bf 2} (1959), 29--36.
\bibitem{PS}
{\sc C. Peskine; L. Szpiro}, Dimension projective finie et cohomologie locale, Applications \'{a} la d\'{e}monstration de conjectures de M. Auslander, H. Bass et A. Grothendieck, {\em Inst. Hautes \'{E}tudes Sci. Publ. Math.} {\bf 42} (1973), 47--119.
\bibitem{S}
{\sc R. Y. Sharp}, Gorenstein modules, {\em Math. Z.} {\bf 115} (1970), 117--139.
\bibitem{Sha}
{\sc R. Y. Sharp}, The Euler characteristic of a finitely generated module of finite injective dimension, {\em Math. Z.} {\bf 130} (1973), 79--93.
\bibitem{Sh}
{\sc R. Y. Sharp}, Acceptable rings and homomorphic images of Gorenstein rings, {\em J. Algebra} {\bf 44} (1977), no. 1, 246--261.
\bibitem{Ta}
{\sc R. Takahashi}, Nagata criterion for Serre's $(\mathrm{R}_n)$ and $(\S_n)$-conditions, {\em Math. J. Okayama Univ.} {\bf 41} (1999), 37--43 (2001).
\bibitem{T}
{\sc R. Takahashi}, Openness of FID--loci, {\em Glasg. Math. J.} {\bf 48} (2006), no. 3, 431--435.
\bibitem{W}
{\sc D. Weston}, On descent in dimension two and non-split Gorenstein modules, {\em J. Algebra} {\bf 118} (1988), 263--275.
\end{thebibliography}
\end{document}